\tikzset{>=stealth'} 
\newcommand{\Ob}{\mathrm{Ob}}
\newcommand{\lcm}{\mathrm{lcm}}
\numberwithin{equation}{section}
\numberwithin{figure}{section}
\newcommand\qedsymbol{\hbox{$\Box$}}
\newcommand\qed{\relax\ifmmode\Box\else
  {\unskip\nobreak\hfil\penalty50\hskip1em\null\nobreak\hfil\qedsymbol
  \parfillskip=\z@\finalhyphendemerits=0\endgraf}\fi}
\newenvironment{proof-of}[1][{}]{\par\noindent \textbf{Proof of} {#1}. }{\qed}
\newenvironment{proof}[0]{\par\noindent \textbf{Proof}.}{\qed}
\DeclareMathOperator*{\Gal}{Gal}
\newcommand{\Dessin}{\mathsf{Dessin}}
\newcommand{\CP}{{\mathbb C}{\mathbb P}}
\newcommand{\GT}{\mathsf{GT}}
\newcommand{\GTSh}{\mathsf{GTSh}}
\newcommand{\GTh}{\widehat{\mathsf{GT}}}
\newcommand{\Zhat}{\widehat{\mathbb{Z}}}
\newcommand{\F}{\mathsf{F}}
\renewcommand{\P}{\mathsf{P}}
\newcommand{\Fh}{\widehat{\mathsf{F}}}
\newcommand{\PaB}{\mathsf{PaB}}
\newcommand{\NFI}{\mathsf{NFI}}
\newcommand{\PB}{\mathrm{PB}} 
\newcommand{\B}{\mathrm{B}} 
\newcommand{\N}{\mathsf{N}} 
\newcommand{\K}{\mathsf{K}} 
\renewcommand{\H}{\mathsf{H}}
\newcommand{\ct}{\mathfrak{ct}}
\newcommand{\fP}{\mathfrak{P}}
\newcommand{\Ih}{\mathrm{Ih}}
\newcommand{\A}{{\mathscr{A}}}
\newcommand{\PR}{{\mathscr{PR}}}
\newcommand{\Hom}{\mathrm{Hom}}
\newcommand{\Stab}{\mathrm{Stab}}
\newcommand{\Aut}{\mathrm{Aut}}
\newcommand{\ord}{\mathrm{ord}}
\newcommand{\isom}{\mathrm{isom}}
\newcommand{\tran}{\mathrm{tran}}
\newcommand{\conn}{{\mathrm {conn} }}
\newcommand{\id}{\mathrm{id}}
\newcommand{\tto}{\longrightarrow}
\newcommand{\iso}{\overset{\simeq}{\tto}}
\newcommand{\ti}[1]{{\tilde{#1}}}
\newcommand{\wh}[1]{{\widehat{#1}}}
\newcommand{\ol}[1]{{\overline{#1}}}
\newcommand{\txt}[1]{{\textrm{#1}}}
\newcommand{\e}[1]{{\textbf{#1}}}
\newcommand{\hs}{\heartsuit}
\newcommand{\al}{{\alpha}}
\newcommand{\ma}{\mathfrak{a}}
\newcommand{\ms}{{\mathfrak{s}}}
\newcommand{\si}{{\sigma}}
\newcommand{\ga}{{\gamma}}
\newcommand{\vf}{{\varphi}}
\newcommand{\ML}{{\mathcal{ML}}}
\newcommand{\cC}{\mathcal{C}}
\newcommand{\cD}{\mathcal{D}}
\newcommand{\cA}{\mathcal{A}}
\newcommand{\cI}{\mathcal{I}}
\newcommand{\cG}{\mathcal{G}}
\newcommand{\cZ}{{\mathcal{Z}}}
\newcommand{\cP}{\mathcal {P}}
\newcommand{\cO}{\mathcal{O}}
\newcommand{\hcP}{\widehat{\mathcal{P}}}
\newcommand{\bbP}{{\mathbb P}}
\newcommand{\bbZ}{{\mathbb Z}}
\newcommand{\bbQ}{{\mathbb Q}}
\newcommand{\te}{\theta}
\newcommand{\lan}{\langle\,}
\newcommand{\ran}{\,\rangle}
\date{}
\newtheorem{thm}{Theorem}[section]
\newtheorem{defi}[thm]{Definition}
\newtheorem{cor}[thm]{Corollary}
\newtheorem{prop}[thm]{Proposition}
\newtheorem{claim}[thm]{Claim}
\newtheorem{remark}[thm]{Remark}
\title{The action of $\GT$-shadows on child's drawings}
\author{Vasily A. Dolgushev}
\date{}
\begin{document}

\large

\maketitle

\begin{flushright}
{\small \it To the memory of Andrey Loginov}
\end{flushright}

\begin{abstract}
$\GT$-shadows \cite{GTshadows} are tantalizing objects that can be thought of as approximations
of elements of the mysterious Grothendieck-Teichmueller group $\GTh$ introduced by V. Drinfeld in 1990.
$\GT$-shadows form a groupoid $\GTSh$ whose objects are finite index subgroups of the 
pure braid group $\PB_4$, that are normal in $\B_4$. The goal of this 
paper is to describe the action of $\GT$-shadows on Grothendieck's child's drawings
and show that this action agrees with that of $\GTh$. We discuss the hierarchy of orbits 
of child's drawings with respect to the actions of $\GTSh$, $\GTh$, and the absolute Galois 
group $G_{\bbQ}$ of rationals. We prove that the monodromy group and the passport 
of a child's drawing are invariant with respect to the action of the subgroupoid $\GTSh^{\hs}$ of charming 
$\GT$-shadows.
We use the action of $\GT$-shadows 
on child's drawings to prove that every Abelian child's drawing admits 
a Belyi pair defined over $\bbQ$. Finally, we describe selected examples of non-Abelian 
child's drawings.
\end{abstract}

\tableofcontents

\section{Introduction}
\label{sec:intro}

The profinite version $\GTh$ of the Grothendieck-Teichmueller group \cite[Section 4]{Drinfeld} 
\cite{Leila-slides},  \cite{Leila-survey} connects topology to number theory in a fascinating way. 
$\GTh$ receives an injective homomorphism \cite{Ihara}, \cite[Section 3.2]{Leila-survey} from 
the absolute Galois group $G_{\bbQ}$ of rationals. It acts on Grothendieck's child's drawings 
and this action is compatible with the embedding $G_{\bbQ} \hookrightarrow \GTh$ and 
the standard action of $G_{\bbQ}$. 
 
Just like $G_{\bbQ}$, the group $\GTh$ is a rather intractable object. For example, currently, 
we know explicitly only two elements of $\GTh$: the identity element and the element that comes 
from the complex conjugation. The author also believes that, for an arbitrary child's drawing $\cD$, 
tools of modern mathematics do not allow us to say much about the orbit $\GTh(\cD)$. 

Let us denote by $\B_n$ (resp. $\PB_n$) the Artin braid group (resp. the pure braid group) on $n$ strands.
We denote by $x_{ij}$, $1 \le i < j \le n$ the standard generators of $\PB_n$ and recall that 
the elements $x_{12}, x_{23}$ generate a free subgroup of $\PB_3$. In this paper, 
we tacitly identify the free group $\F_2$ on two generators with 
$\lan x_{12}, x_{23} \ran \le \PB_3$. 

$\GTh$ can be defined as the group of continuous automorphisms\footnote{We tacitly assume that 
automorphisms of $\wh{\PaB}$ act trivially on the set of objects of $\wh{\PaB}(n)$ for every $n$.} 
of the profinite completion $\wh{\PaB}$ 
of the operad  $\PaB$ of parenthesized braids  \cite{BNGT},  \cite[Appendix A]{GTshadows}, \cite[Chapter 6]{Fresse1},  
\cite{Tamarkin}. $\PaB$ is an operad in the category of groupoids and it is ``assembled'' from the 
braid groups $(\B_{n})_{n \ge 2}$. Objects of the groupoid $\PaB(n)$ are completely parenthesized 
sequences of number $1,2,\dots, n$ in which each number appears exactly once. For example, $\Ob(\PaB(3))$
has $12$ objects: $(1,2)3,~1(2,3),~ (2,1)3,~ 2(1,3),~\dots$.
 
The isomorphisms $\al \in \PaB((1,2)3, 1(2,3))$ and $\beta \in \PaB((1,2), (2,1))$
shown in figure \ref{fig:beta-alpha} play an important role for $\PaB$. 
\begin{figure}[htp] 
\centering 
\begin{tikzpicture}[scale=1.5, > = stealth]
\tikzstyle{v} = [circle, draw, fill, minimum size=0, inner sep=1]
\draw (-0.7,0.5) node[anchor=center] {{$\beta ~ : = $}};
\node[v] (v1) at (0, 0) {};
\draw (0,-0.2) node[anchor=center] {{\small $1$}};
\draw (0,1.2) node[anchor=center] {{\small $2$}};
\node[v] (v2) at (1, 0) {};
\draw (1,-0.2) node[anchor=center] {{\small $2$}};
\draw (1,1.2) node[anchor=center] {{\small $1$}};
\node[v] (vv2) at (0, 1) {};
\node[v] (vv1) at (1, 1) {};
\draw [->] (v1) -- (vv1);
\draw (v2) -- (0.6, 0.4); 
\draw [->] (0.4, 0.6) -- (vv2);
\begin{scope}[shift={(3.5,0)}]
\draw (-0.7,0.5) node[anchor=center] {{$\al ~ : = $}};
\node[v] (v1) at (0, 0) {};
\draw (-0.15,-0.2) node[anchor=center] {{\small $($}};
\draw (0,-0.2) node[anchor=center] {{\small $1$}};
\node[v] (v2) at (0.5, 0) {};
\draw (0.5,-0.2) node[anchor=center] {{\small $2$}};
\draw (0.65,-0.2) node[anchor=center] {{\small $)$}};
\node[v] (v3) at (1.5, 0) {};
\draw (1.5,-0.2) node[anchor=center] {{\small $3$}};
\node[v] (vv1) at (0, 1) {};
\draw (0,1.2) node[anchor=center] {{\small $1$}};
\node[v] (vv2) at (1, 1) {};
\draw (0.85,1.2) node[anchor=center] {{\small $($}};
\draw (1,1.2) node[anchor=center] {{\small $2$}};
\node[v] (vv3) at (1.5, 1) {};
\draw (1.5,1.2) node[anchor=center] {{\small $3$}};
\draw (1.65,1.2) node[anchor=center] {{\small $)$}};
\draw [->] (v1) -- (vv1);  \draw [->] (v2) -- (vv2);  \draw [->] (v3) -- (vv3); 
\end{scope}
\end{tikzpicture}
\caption{The isomorphisms $\al$ and $\beta$} \label{fig:beta-alpha}
\end{figure}
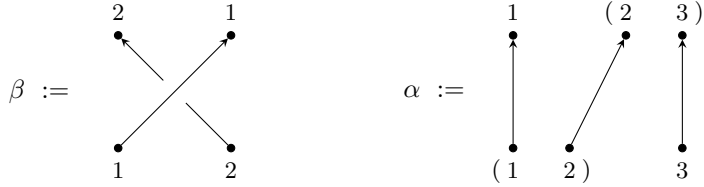

Due to \cite[Theorem 6.2.4]{Fresse1}, $\PaB$ is generated by $\al$ and $\beta$
(as the operad in the category of groupoids) and any relation involving $\al$ 
and $\beta$ is a consequence of the two hexagon relations and the pentagon relation
(see (A.13), (A.14) and (A.15) in \cite[Appendix A]{GTshadows}). 

Since $\al$ and $\beta$ are topological generators of $\wh{\PaB}$, every $\hat{T} \in \GTh$
is uniquely determined by the values $\hat{T}(\al) \in \wh{\PaB}( (1,2)3, 1(2,3) )$ and 
$\hat{T}(\beta) \in \wh{\PaB}((1,2), (2,1))$. 
In addition, since the automorphism group of $(1,2)3$ (resp. $(1,2)$) in $\wh{\PaB}$ is $\wh{\PB}_3$
(resp. $\wh{\PB}_2 \cong \Zhat$), every $\hat{T} \in \GTh$ is uniquely determined 
by a pair $(\hat{m}, \hat{f}) \in \Zhat \times \wh{\PB}_3$ via the equations
$$
\hat{T}(\beta) = \beta \circ x_{12}^{\hat{m}}, \qquad 
\hat{T}(\al) =  \al \circ \hat{f}. 
$$ 
Using the hexagon relations and
the pentagon relation, one can show that $\hat{f} \in \hat{\F}_2$. In fact, one can show that 
$\hat{f}$ belongs to the topological closure of the commutator subgroup $[ \hat{\F}_2,  \hat{\F}_2]$. 

It is easy to see \cite[Remark 1.1]{GTshadows} that the action of $\GTh$ on $\wh{\PB}_3 \cong \Aut((1,2)3)$ 
descends to the action on $\hat{\F}_2  \le \wh{\PB}_3$ and it is given (on the topological generators 
of $\Fh_2$) by the formulas 
\begin{equation}
\label{T-hat-F2-hat}
\hat{T}_{\Fh_2}(x):= x^{2\hat{m}+1}, \qquad
\hat{T}_{\Fh_2}(y):= \hat{f}^{-1} y^{2\hat{m}+1} \hat{f},
\end{equation}
where $(\hat{m}, \hat{f})$ is the pair in $\Zhat \times \wh{\F}_2$ corresponding to $\hat{T}$. 

Let $S_d$ be the symmetric group of degree $d$ and 
$\Hom_{\tran}(\Fh_2, S_d)$ be the set of continuous group homomorphisms 
$\psi: \Fh_2 \to S_d$ for which the subgroup $\psi(\Fh_2)$ acts transitively 
on the set $\{1,2,\dots, d\}$. Here $S_d$ is considered with the discrete topology. 

The set $\Hom_{\tran}(\Fh_2, S_d)$ carries the obvious action of $S_d$ by conjugation 
and a \e{child's drawing} of degree $d$ can be defined as an orbit of the $S_d$-action 
on $\Hom_{\tran}(\Fh_2, S_d)$. In these terms, it is easy to define the (right) action of $\GTh$ on 
child's drawings: given a child's drawing $[\psi]$ represented by a continuous group homomorphism 
$\psi: \Fh_2 \to S_d$ and $\hat{T} \in \GTh$ the child's drawing $[\psi]^{\hat{T}}$ is represented by 
the continuous group homomorphism 
$$
\psi \circ \hat{T}_{\Fh_2} : \Fh_2 \to S_d\,.
$$ 

Let us denote by $\NFI_{\PB_4}(\B_4)$ the poset of finite index normal subgroups $\N \unlhd \B_4$ 
satisfying the condition $\N \le \PB_4$. In paper \cite{GTshadows}, the authors introduced the 
concept of a $\GT$-\e{shadow}. Loosely speaking, a $\GT$-shadow is an onto morphism 
of (truncated) operads 
$$
\PaB^{\le 4} ~\to~ \PaB^{\le 4} /\sim_{\N}\,,
$$
where $\sim_{\N}$ is an equivalence relation on the set of morphisms of $\PaB^{\le 4}$ that 
comes from an element $\N$ of the poset $\NFI_{\PB_4}(\B_4)$. 

In paper \cite{GTshadows}, it was proved that $\GT$-shadows form a groupoid $\GTSh$
whose objects are elements of  $\NFI_{\PB_4}(\B_4)$. For every $\K, \N \in \NFI_{\PB_4}(\B_4)$, 
the set $\GTSh(\K, \N)$ may be identified with the set of isomorphisms of 
(truncated) operads 
$$
\PaB^{\le 4} /\sim_{\K} ~\iso~ \PaB^{\le 4} /\sim_{\N}\,.
$$

In this paper, we use the poset $\NFI_{\PB_4}(\B_4)$ and the set of all child's drawings 
to form a category $\Dessin$. We show that $\GTh$ acts naturally on the poset 
$\NFI_{\PB_4}(\B_4)$ and the action of $\GTh$ on child's drawings gives us a cofunctor 
$$
\A : \GTh_{\NFI} \to \Dessin
$$
from the corresponding transformation groupoid $\GTh_{\NFI}$ to the category $\Dessin$. 

We show (see Theorem \ref{thm:Action}) that $\GT$-shadows act on child's drawings in the sense 
that we have a natural cofunctor 
\begin{equation}
\label{A-sh-intro}
\A^{sh} : \GTSh \to \Dessin.
\end{equation}

Recall \cite[Section 2.4]{GTshadows} that, for every $\hat{T} \in \GTh$ and 
$\N \in \NFI_{\PB_4}(\B_4)$, we can produce a $\GT$-shadow $T_{\N}$ with 
the target $\N$, and $T_{\N}$ may be viewed as an approximation of $\hat{T}$. 
Using this passage from elements of $\GTh$ to $\GT$-shadows we define a functor 
\begin{equation}
\label{PR-intro}
\PR : \GTh_{\NFI} \to \GTSh\,.
\end{equation}

We prove (see Theorem \ref{thm:compatible}) that the action of $\GT$-shadows on child's drawings
is compatible with the action of $\GTh$ in the sense that the functors 
$$
\A \qquad \txt{and} \qquad \A^{sh} \circ \PR
$$
are equal in the strict sense. 

Using Theorem \ref{thm:compatible} and the compatibility of the $G_{\bbQ}$-action 
and the $\GTh$-action on child's drawings, we deduce several interesting corollaries: 

\begin{itemize}

\item Corollary \ref{cor:hierarchy} describes the hierarchy of orbits of 
the $\GTSh$-action, $\GTh$-action and $G_{\bbQ}$-action on the set of child's drawings.

\item Corollary \ref{cor:field-of-moduli} may be viewed as a version of \cite[Proposition 14]{HS}; 
this statement gives us a useful bound on the degree of the field of moduli of a child's drawing. 
 
\item Corollary \ref{cor:Galois-dessin-overQ} tells us that, using the poset 
$\NFI_{\PB_4}(\B_4)$, we can produce many examples of Galois child's drawings 
that admit Belyi pairs defined over $\bbQ$.

\end{itemize}
 
Using consequences of the hexagon relations for $\GT$-shadows, we show 
(see Theorem \ref{thm:passport-invar})
that the passport of a child's drawing is invariant with respect to the action of 
(charming) $\GT$-shadows. 
 
In this paper, we also prove some basic facts about Abelian child's drawings and show 
(see Corollary \ref{cor:GQ-orbit-singleton} and \cite[Corollary 3.5]{Hidalgo}) 
that every Abelian child's drawing admits a Belyi pair defined over $\bbQ$.

Finally, we describe selected examples of non-Abelian child's drawings whose $\GTSh$-orbits 
were computed using software package \cite{Package-GT}. Whenever possible, the results were 
compared to $G_{\bbQ}$-orbits from database \cite{Belyi_database}. It is amazing to see that, if 
a $\GTSh$-orbit and the $G_{\bbQ}$-orbit of a child's drawing can be computed then these orbits 
coincide. 

\subsection*{Organization of the paper}
Section \ref{sec:background} is devoted to the background material. It contains a brief reminder 
of the groupoid $\GTSh$ and its link to $\GTh$. In this section, we also recall child's drawings, 
introduce the category $\Dessin$ and define the action of $\GTh$ on child's drawings in terms 
of a cofunctor from a certain transformation groupoid $\GTh_{\NFI}$ to $\Dessin$.
 
In Section \ref{sec:GTshadows-act}, we define the action of $\GT$-shadows on child's drawings, 
describe the relationship between orbits of a child's drawing with respect to different actions, and 
prove that the monodromy group and the passport of a child's drawing are invariant with respect to 
the action of (charming) $\GT$-shadows.   
 
Section \ref{sec:Abelian} is devoted to various properties of Abelian child's drawings. 
In this section, we prove that every Abelian child's drawing admits a 
Belyi pair defined over $\bbQ$. 

In Section \ref{sec:examples}, we describe selected examples of non-Abelian child's drawings 
whose $\GTSh$-orbits were computed. 

In Appendix \ref{app:simple-hexa}, we prove that charming $\GT$-shadows satisfy 
versions of relations (4.3) and (4.4) from \cite[Section 4]{Drinfeld}.

In Appendix \ref{app:package}, we give an outline of the package $\GT$ for 
working with $\GT$-shadows and their action on child's drawings. For more details, 
please see the documentation \cite{Package-GT} for this package.

\begin{remark}  
\label{rem:GTh}
We should remark that the notation $\GTh$ is a bit misleading: the hat over ``$\GT$'' 
does not mean that $\GTh$ is a profinite completion of some ``well known group $\GT$''. 
However, please see \cite[Theorem 3.8]{GTshadows}.
\end{remark}

~\\
{\bf Acknowledgement.} V.A.D. is thankful to John Voight for many illuminating discussions 
and for his patience with his (possibly naive) questions. V.A.D. is thankful to Leila Schneps for 
her unbounded enthusiasm about $\GT$-shadows and everything related to $\GT$-shadows. 
V.A.D. is thankful to Benjamin Enriquez for his suggestion to use the transformation groupoid 
corresponding to the action of $\GTh$ on $\NFI_{\PB_4}(\B_4)$.
V.A.D. benefitted from discussions with David Harbater, Julia Hartmann, Sean O'Donnell and Florian Pop. 
V.A.D. is thankful to Khanh Le, Aidan Lorenz, Chelsea Zackey for discussions and their contributions 
to the software package $\GT$. V.A.D. is thankful to Pavol Severa for showing him 
the works of Pierre Guillot. This paper benefitted from many useful remarks and suggestions of 
the anonymous referee and V.A.D. would like to acknowledge the referee for her/his diligence.
V.A.D. acknowledges Temple University for 2021 Summer Research Award.

~\\
{\bf Memorial note.} Andrey Loginov (1977-2016) was an experimental physicist,
and his work helped to push frontiers of knowledge related to the fundamental questions 
about elementary particles.  There were several moments in my life when he played for me
a role of an older brother. Life connected us in bizarre ways by cities scattered around the globe: 
Berdychiv, Chicago, Dolgoprudny, Moscow and Tomsk.  It is very sad that I did not have 
a chance to say a proper goodbye to him...

\subsection{Notational conventions}
For a set $X$ with an equivalence relation and $a \in X$ we will denote by $[a]$
the equivalence class represented by the element $a$. 

The notation $\B_n$ (resp. $\PB_n$) is reserved for the Artin braid group 
on $n$ strands (resp. the pure braid group on $n$ strands). The standard generators 
of $\B_n$ are denoted by $\si_1, \dots, \si_{n-1}$ and the standard generators of 
$\PB_n$ are denote by $x_{ij}$ for $1 \le i < j \le n$. We set
$$
c := x_{23} x_{12} x_{13}  \, \in \, \PB_3
$$
and recall that $c$ generates the center $\cZ(\PB_3)$ of $\PB_3$ 
(as well as the center $\cZ(\B_3)$ of $\B_3$).  
The free group $\F_2$ on two generators is tacitly identified with the subgroup 
$\lan x_{12}, x_{23} \ran \le \PB_3$. Occasionally, we denote the standard 
generators of $\F_2$ by $x$ and $y$, i.e. $x:=x_{12}$ and $y: = x_{23}$.
 
$S_d$ denotes the group of bijections $\{1,2, \dots, d\} \iso \{1,2, \dots, d\}$ 
(i.e. the symmetric group of degree $d$). 
A subgroup $H \le S_d$ is called \e{transitive}
if its standard action on $\{1,2, \dots, d\}$ is transitive. 
Every finite group is tacitly considered with the discrete topology. For a group $G$, the notation 
$[G, G]$ is reserved for the commutator subgroup of $G$. 
For a normal subgroup $H\unlhd G$ of finite index, we denote by $\NFI_{H}(G)$ the poset 
of finite index normal subgroups $N$ in $G$ such that $N\le H$. The notation $\NFI(G)$ is 
reserved for the poset of finite index normal subgroups of $G$. 
For $\N \in \NFI(G)$, $\cP_{\N}$ denotes the standard projection 
$$
\cP_{\N} : G \to G/\N, \qquad 
\cP_{\N}(g):= g \N.
$$
Moreover, $\hcP_{\N}$ denotes the standard continuous group homomorphism from the 
profinite completion $\wh{G}$ of $G$ to the finite group $G/\N$. 
For a group $G$ and $g \in G$, 
the notation $\cZ_G(g)$ is reserved for the centralizer of $g$ in $G$. 
If a group $G$ is residually finite (e.g. $G = \F_n, ~\PB_n$ or $G = \B_n$), then we 
tacitly identify $G$ with its image in the profinite completion $\wh{G}$.

For objects $a,b$ of a category $\cC$, $\cC(a,b)$ often denotes the set of morphisms 
from $a$ to $b$. For a groupoid $\cG$, the notation $\ga \in \cG$ means that 
$\ga$ is a \e{morphism} of this groupoid. We assume that all functors are covariant and 
the word ``cofunctor'' means a contravariant functor. 

We will freely use the language of operads \cite[Section 3]{notes}, \cite[Chapter 1]{Fresse1}, 
\cite{LV-operads}, \cite{Markl-Stasheff}, \cite{Jim-operad}. In this paper, we encounter operads 
in the category of sets and in the category of (topological) groupoids. The category of topological 
groupoids is treated in the ``strict sense''. For example, the associativity axioms for 
the {\it elementary insertions}\footnote{In the literature, elementary insertions are sometimes called {\it partial 
compositions.}} $\circ_i$ (for operads in the category of groupoids)
are satisfied ``on the nose''.   

For an integer $q \ge 1$, a $q$-\e{truncated operad} in the category of groupoids is 
a collection of groupoids $\{ \cG(n) \}_{1 \le n \le q}$ such that 
\begin{itemize}

\item For every $1 \le n \le q$, the groupoid $\cG(n)$ is equipped with an action of $S_n$.

\item For every triple of integers $i,n,m$ such that $1 \le i  \le n$, $n,m, n+m -1 \le q$
we have functors 
\begin{equation}
\label{circ-i-fun}
\circ_i :  \cG(n) \times  \cG(m) \to  \cG(n+m -1).
\end{equation}

\item The axioms of the operad for $\{ \cG(n) \}_{1 \le n \le q}$ are satisfied 
in the cases where all the arities are $\le q$. 

\end{itemize}

For every operad $\cO$ and for every integer $q \ge 1$, the disjoint union 
$$
\cO^{\le q} : = \bigsqcup_{n=0}^q \cO(n)
$$
is clearly a $q$-truncated operad. 

In this paper, we mostly consider $4$-truncated operads. So, we will simply call them \e{truncated operads}. 
  
The operad $\PaB$ of parenthesized braids, its truncation $\PaB^{\le 4}$ and 
its completion $\wh{\PaB}^{\le 4}$ play an important role in this paper.
See \cite{BNGT},  \cite[Appendix A]{GTshadows}, \cite[Chapter 6]{Fresse1},  
\cite{Tamarkin} for more details about these objects. 

\section{Background material}
\label{sec:background}

\subsection{Reminder of $\GT$-shadows}
\label{sec:GTsh-reminder}

In this section, we review the groupoid $\GTSh$ whose objects are elements of the poset 
$\NFI_{\PB_4}(\B_4)$ and whose morphisms are called $\GT$-shadows. 

Recall \cite[Section 2.2]{GTshadows} that every element $\N \in \NFI_{\PB_4}(\B_4)$ gives 
us a compatible equivalence relation $\sim_{\N}$ on the truncation 
\begin{equation}
\label{PaB-le-4}
\PaB^{\le 4} := \PaB(1) \sqcup \PaB(2) \sqcup \PaB(3) \sqcup \PaB(4)
\end{equation}
of the operad $\PaB$, i.e. 
\begin{itemize}

\item $\sim_{\N}$ is an equivalence relation on the set of morphisms of 
$\PaB(1) \sqcup \PaB(2) \sqcup \PaB(3) \sqcup \PaB(4)$;

\item $\sim$ is compatible with the actions of the symmetric groups $S_2, S_3, S_4$; 

\item $\sim$ is compatible with the composition of morphisms and the operadic insertions;

\item finally, the quotient operad $\PaB^{\le 4}/\sim$ is finite.

\end{itemize}

More precisely, given $\N \in \NFI_{\PB_4}(\B_4)$, we produce $\N_{\PB_3} \in \NFI_{\PB_3}(\B_3)$ and  
$\N_{\PB_2} \in \NFI_{\PB_2}(\B_2)$. (Since $\PB_2$ is the infinite cyclic group and $\B_2$
is Abelian, $\N_{\PB_2}$ is uniquely determined by its index  $N_{\ord} :=|\PB_2 : \N_{\PB_2} |$).
Then $\N$ (resp. $\N_{\PB_3}$,  $\N_{\PB_2}$) gives us an equivalence relation 
on $\PaB(4)$ (resp. $\PaB(3)$, $\PaB(2)$). Due to \cite[Proposition 2.4]{GTshadows}, these 
equivalence relations on $\PaB(4)$, $\PaB(3)$ and $\PaB(2)$ assemble into a compatible 
equivalence relation\footnote{Note that $\PaB(1)$ is the groupoid with exactly one object and exactly 
one (identity) morphism.} $\sim_{\N}$ on \eqref{PaB-le-4}. 

For  $\N \in \NFI_{\PB_4}(\B_4)$, we denote by $\cP_{\N}$ the standard projection 
\begin{equation}
\label{cP-N}
\cP_{\N} : \PaB^{\le 4} \to \PaB^{\le 4}/ \sim_{\N}. 
\end{equation}
Similarly, $\hcP_{\N}$ denotes the standard continuous (onto) map of truncated operads 
\begin{equation}
\label{hcP-N}
\hcP_{\N} : \wh{\PaB}^{\le 4} \to \PaB^{\le 4}/ \sim_{\N}, 
\end{equation}
where $\wh{\PaB}^{\le 4}$ is the profinite completion of $\PaB^{\le 4}$. 

We set
\begin{equation}
\label{N-F-2}
\N_{\F_2} := \N_{\PB_3} \cap \F_2\,,
\end{equation}
where $\F_2$ is (tacitly) identified with the subgroup $\lan x_{12}, x_{23} \ran \le \PB_3$. 
 
A $\GT$-\emph{pair} with the target $\N$ is a morphism of truncated operads 
$\PaB^{\le 4} \to \PaB^{\le 4}/\sim_{\N}$ and such morphisms are in 
bijection with pairs 
\begin{equation}
\label{pair}
(m, f \N_{\PB_3} ) \in \{0,1, \dots, N_{\ord}-1\} \times  \PB_3 / \N_{\PB_3} 
\end{equation}
satisfying the hexagon relations
\begin{equation}
\label{hexa1}
\si_1 x_{12}^m \, f^{-1} \si_2 x_{23}^m f \, \N_{\PB_3} ~ = ~ 
f^{-1} \si_1 \si_2 x_{12}^{-m} c^m \, \N_{\PB_3}\,,
\end{equation} 
\begin{equation}
\label{hexa11}
f^{-1} \si_2 x_{23}^m f \, \si_1 x_{12}^m \,  \N_{\PB_3}  ~=~ 
\si_2 \si_1 x_{23}^{-m} c^m \, f \,  \N_{\PB_3}
\end{equation}
and the pentagon relation 
\begin{equation}
\label{GT-penta}
\vf_{234}(f)\, \vf_{1,23,4} (f)\, \vf_{123}(f) \, \N    ~ = ~  \vf_{1,2,34}(f)   \vf_{12,3,4}(f)\, \N.
\end{equation}
Both sides of \eqref{hexa1} and \eqref{hexa11} are elements of $\B_3 / \N_{\PB_3}$ 
and both sides of \eqref{GT-penta} are elements of $\PB_4/\N$. Explicit formulas 
for the group homomorphisms
$\vf_{234}, \vf_{1,23,4}, \vf_{123}, \vf_{1,2,34}$ and $\vf_{12,3,4}$ from $\PB_3$ to 
$\PB_4$ are given in \cite[Appendix A.4, (A.18)]{GTshadows}.

Just as in \cite{GTshadows}, we tacitly identify morphisms $\PaB^{\le 4} \to \PaB^{\le 4}/\sim_{\N}$
with pairs \eqref{pair} satisfying \eqref{hexa1}, \eqref{hexa11} and \eqref{GT-penta}.

It is convenient to represent $\GT$-pairs by tuples $(m,f) \in \bbZ \times \PB_3$ and 
we denote by $[m,f]$ the $\GT$-pair represented by the tuple $(m,f)$. 
For a $\GT$-pair $[m,f]$ with the target $\N$, we denote by 
\begin{equation}
\label{T-m-f}
T_{m,f} : \PaB^{\le 4} \to \PaB^{\le 4}/\sim_{\N}
\end{equation}
the corresponding morphism of (truncated) operads. 

It is clear that, for every $\GT$-pair $[m,f]$ with the target $\N$,
$T_{m,f}$ gives us the following group homomorphisms  
$$
T^{\PB_4}_{m,f} : \PB_4 \to \PB_4/\N,  \qquad
T^{\PB_3}_{m,f} : \PB_3 \to \PB_3/\N_{\PB_3}\,,  \qquad
T^{\PB_2}_{m,f} : \PB_2 \to \PB_2/\N_{\PB_2}\,.  
$$
For more details, see \cite[Corollary 2.7]{GTshadows}.

A $\GT$-\emph{shadow} with the target $\N$ is an \e{onto} morphism of truncated operads 
$\PaB^{\le 4} \to \PaB^{\le 4}/\sim_{\N}$ and such morphisms are in bijection with 
pairs \eqref{pair} that satisfy the following conditions: 
\begin{itemize}

\item $(m,f)$ obeys relations \eqref{hexa1}, \eqref{hexa11}, \eqref{GT-penta},

\item $2m+1$ represents a unit in the ring $\bbZ/ N_{\ord} \bbZ$, and

\item the group homomorphism $T^{\PB_3}_{m,f}: \PB_3 \to  \PB_3/\N_{\PB_3}$ is onto. 

\end{itemize}
We will identify $\GT$-shadows (with the target $\N$) with pairs \eqref{pair} satisfying 
the above conditions and denote by $\GT(\N)$ the set of $\GT$-shadows with the target $\N$.  

Let $\N \in \NFI_{\PB_4}(\B_4)$ and $[m,f] \in \GT(\N)$. 
Due to \cite[Proposition 2.11]{GTshadows}, the ``kernel'' of the 
morphism $T_{m,f}$ \eqref{T-m-f} is a compatible equivalence relation $\sim_{\N^{\ms}}$, where 
$$
\N^{\ms} := \ker(T^{\PB_4}_{m,f}).
$$
Hence $T_{m,f}$ induces an isomorphism of truncated operads 
\begin{equation}
\label{T-m-f-isom}
T^{\isom}_{m,f} : \PaB^{\le 4}/\sim_{\N^{\ms}} ~\iso~ \PaB^{\le 4}/\sim_{\N}
\end{equation}
and factors as follows: $T_{m, f} = T^{\isom}_{m,f} \circ \cP_{\N^{\ms}}$.

Thus $\GT$-shadows form a groupoid $\GTSh$: objects of $\GTSh$ are elements of
$\NFI_{\PB_4}(\B_4)$; for $\N^{(1)}, \N^{(2)} \in \NFI_{\PB_4}(\B_4)$, 
\begin{equation}
\label{GTSh-morph}
\GTSh(\N^{(2)}, \N^{(1)}) := \{[m,f] \in \GT(\N^{(1)}) ~|~ \N^{(2)} = \ker(T^{\PB_4}_{m,f}) \}
\end{equation}
or equivalently $\GTSh(\N^{(2)}, \N^{(1)})$ consists of isomorphisms of 
(truncated) operads 
$$
\PaB^{\le 4}/\sim_{\N^{(2)}} ~\iso~ \PaB^{\le 4}/\sim_{\N^{(1)}}.
$$
The composition of morphisms is defined via the identification 
of elements $[m,f] \in \GT(\N)$ with isomorphisms \eqref{T-m-f-isom}. 
(See \eqref{compose} below for the explicit formula of the composition of (practical) $\GT$-shadows
in terms of their representatives.)

\bigskip

Recall \cite[Corollary 2.8]{GTshadows} that, for every $[m,f] \in \GT(\N)$, the homomorphism 
$T^{\PB_3}_{m,f} : \PB_3 \to \PB_3/\N_{\PB_3}$ is given by the explicit formulas: 
\begin{equation}
\label{T-PB-3}
T^{\PB_3}_{m,f}(x_{12}) = x^{2m+1}_{12} \N_{\PB_3}, \quad  
T^{\PB_3}_{m,f}(x_{23}) =  f^{-1} x^{2m+1}_{23} f \N_{\PB_3}, 
\quad 
T^{\PB_3}_{m,f}(c) = c^{2m+1} \N_{\PB_3}.
\end{equation}

Since $\PB_3 \cong \lan x_{12}, x_{23} \ran \times \lan c \ran$, the restriction 
of $T^{\PB_3}_{m,f}$ to $\F_2 = \lan x_{12}, x_{23} \ran $ gives us a group homomorphism 
\begin{equation}
\label{T-F-2}
T^{\F_2}_{m,f} : \F_2 \to \F_2/\N_{\F_2}
\end{equation}
with 
\begin{equation}
\label{T-F-2-def}
T^{\F_2}_{m,f}(x) := x^{2m+1}\, \N_{\F_2}\,, \qquad  
T^{\F_2}_{m,f}(y) :=  f^{-1} y^{2m+1} f\, \N_{\F_2}\,.
\end{equation}

Let us prove the following useful statement:
\begin{prop}  
\label{prop:T-F-2-onto}
For every $\N \in \NFI_{\PB_4}(\B_4)$ and every $[m,f] \in \GT(\N)$, the group 
homomorphism \eqref{T-F-2} is onto.
\end{prop}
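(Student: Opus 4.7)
The plan is to transfer the statement into a question about subgroups of $\PB_3/\N_{\PB_3}$, exploit the surjectivity of $T^{\PB_3}_{m,f}$ guaranteed by the $\GT$-shadow axioms, and then extract the remaining piece from the pentagon relation. The defining equality $\N_{\F_2} = \F_2 \cap \N_{\PB_3}$ from \eqref{N-F-2} identifies $\F_2/\N_{\F_2}$ with the subgroup $\bar{\F}_2 := \F_2 \cdot \N_{\PB_3}/\N_{\PB_3}$ of $\PB_3/\N_{\PB_3}$ via the inclusion $\F_2 \hookrightarrow \PB_3$. Under this identification, the image of $T^{\F_2}_{m,f}$ corresponds to $K := T^{\PB_3}_{m,f}(\F_2) \subseteq \bar{\F}_2$; note that although $f$ may have a non-trivial central component $c^r$, the centre cancels in $f^{-1} x_{23}^{2m+1} f$, which therefore automatically lies in $\F_2$. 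The claim becomes $K = \bar{\F}_2$.

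Set $\bar{C} := \lan c \ran \N_{\PB_3}/\N_{\PB_3}$, a cyclic central subgroup of $\PB_3/\N_{\PB_3}$. Since $\PB_3 = \F_2 \cdot \lan c \ran$, the surjectivity of $T^{\PB_3}_{m,f}$ yields $K \cdot \lan c^{2m+1} \N_{\PB_3} \ran = \PB_3/\N_{\PB_3}$. Projecting to the cyclic quotient $\PB_3/(\F_2 \N_{\PB_3})$ of order $n_0 := [\PB_3 : \F_2 \N_{\PB_3}]$, the image of $K$ is trivial, while the image of $c$ must generate; this forces $\gcd(2m+1, n_0) = 1$. A direct calculation then identifies $\bar{\F}_2 \cap \bar{C}$ as the cyclic group generated by $c^{n_0} \N_{\PB_3} = f_0 \N_{\PB_3}$, where $f_0$ denotes any element of $\F_2$ with $c^{n_0} f_0^{-1} \in \N_{\PB_3}$. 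Combining the above observations one gets $K \cdot (\bar{\F}_2 \cap \bar{C}) = \bar{\F}_2$, and since $\bar{\F}_2 \cap \bar{C}$ is cyclic with generator $f_0 \N_{\F_2}$, the entire claim reduces to the single inclusion $f_0 \N_{\F_2} \in K$.

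The final step is the main obstacle; I would extract it from the pentagon and hexagon relations. Composing the pentagon \eqref{GT-penta} with the forgetful operadic morphisms $\PB_4 \to \PB_3$ that forget the fourth (respectively first) strand produces relations in $\PB_3/\N_{\PB_3}$ which, after using the central identities $x_{12} x_{13} = c x_{23}^{-1}$ and $x_{13} x_{23} = c x_{12}^{-1}$, should collapse to constraints of the form $x_{12}^N, x_{23}^{N'} \in \N_{\PB_3}$, where $N$ and $N'$ are linear combinations of the exponent sums of the $\F_2$-part of $f$ together with its central exponent $r$. A parallel abelianization analysis of the hexagon relations \eqref{hexa1}--\eqref{hexa11} yields a further linear constraint modulo the image of $\N_{\PB_3}$ in $\bbZ$. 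Assembling these linear relations should allow one to exhibit an explicit word in $x^{2m+1} \N_{\F_2}$ and $f^{-1} y^{2m+1} f \N_{\F_2}$ whose class in $\F_2/\N_{\F_2}$ equals $f_0 \N_{\F_2}$, thereby completing the proof.
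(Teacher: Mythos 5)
Your reduction is sound as far as it goes: identifying $\F_2/\N_{\F_2}$ with $\ol{\F}_2 := \F_2\N_{\PB_3}/\N_{\PB_3}$, setting $K := T^{\PB_3}_{m,f}(\F_2)$, deducing $K\cdot\lan c^{2m+1}\N_{\PB_3}\ran = \PB_3/\N_{\PB_3}$ from the surjectivity of $T^{\PB_3}_{m,f}$ and $\PB_3=\F_2\times\lan c\ran$, and identifying $\ol{\F}_2\cap\ol{C}$ with $\lan c^{n_0}\N_{\PB_3}\ran$ are all correct. But this only relocates the entire difficulty into the final inclusion $c^{n_0}\N_{\PB_3}\in K$, and for that step you offer a plan (``should collapse'', ``should allow one to exhibit'') rather than an argument. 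The plan is also aimed at the wrong target: abelianizing the pentagon and hexagon relations can only constrain exponent sums of $f$, whereas membership of $c^{n_0}\N_{\PB_3}$ in the (generally non-abelian) subgroup $K=\lan x^{2m+1}\N_{\PB_3},\, f^{-1}y^{2m+1}f\,\N_{\PB_3}\ran$ is not detectable by exponent-sum identities; the pentagon relation plays no role here. This is a genuine gap.

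The missing idea is to use the surjectivity of $T^{\PB_3}_{m,f}$ a \emph{second} time, together with the centrality of $c$ (this is how the paper proceeds). Choose $q_1,q_2$ with $(2m+1)q_i\equiv 1$ modulo the orders of $x_{12}\N_{\PB_3}$ and $x_{23}\N_{\PB_3}$ (possible since $2m+1$ is a unit mod $N_{\ord}$), so that $T^{\F_2}_{m,f}(x_{12}^{q_1})=x_{12}\N_{\F_2}$ and $T^{\F_2}_{m,f}(x_{23}^{q_2})=f^{-1}x_{23}f\,\N_{\F_2}$. Since $T^{\PB_3}_{m,f}$ is onto, pick $w\in\PB_3$ with $T^{\PB_3}_{m,f}(w)=f\N_{\PB_3}$ and write $w=w_1c^k$ with $w_1\in\F_2$; then, because $c$ is central,
$$
T^{\F_2}_{m,f}(w_1x_{23}^{q_2}w_1^{-1})=T^{\PB_3}_{m,f}(w\,x_{23}^{q_2}\,w^{-1})
= f\,(f^{-1}x_{23}f)\,f^{-1}\,\N_{\PB_3}=x_{23}\N_{\PB_3}.
$$
Hence both generators $x_{12}\N_{\F_2}$ and $x_{23}\N_{\F_2}$ lie in the image and $K=\ol{\F}_2$ directly; the element $c^{n_0}\N_{\PB_3}$ never needs to be produced. (Equivalently, in your framework: $f\N_{\PB_3}\in K\cdot\ol{C}$ with $\ol{C}$ central, so conjugation by $f\N_{\PB_3}$ preserves $K$ and $x_{23}\N_{\PB_3}=f(f^{-1}x_{23}f)f^{-1}\N_{\PB_3}\in K$.) Without some argument of this kind your proof is incomplete.
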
  
\begin{proof}
Due to \cite[Proposition 2.3]{GTshadows}, $N_{\ord}$ is the least common multiple
of the orders of the elements $x_{12} \N_{\PB_3},~ x_{23} \N_{\PB_3}$ and $c \N_{\PB_3}$ 
in $\PB_3/ \N_{\PB_3}$.  

Therefore, since $2m+1$ is coprime with $N_{\ord}$, there exist $q_1, q_2 \in \bbZ$ such that 
\begin{equation}
\label{progress}
T^{\F_2}_{m,f} (x^{q_1}_{12}) = x_{12} \N_{\F_2}, 
\qquad 
T^{\F_2}_{m,f} (x^{q_2}_{23}) = f^{-1} x_{23} f \N_{\F_2}. 
\end{equation}

Since the homomorphism $T^{\PB_3}_{m,f}$ is onto, there exists $w \in \PB_3$ such that 
\begin{equation}
\label{fN-from-PB-3}
T^{\PB_3}_{m,f}(w) = f \N_{\PB_3}
\end{equation}

Since $\PB_3 = \lan x_{12}, x_{23} \ran \times \lan c \ran$,
$$
w = w_1 c^k\,,
$$
where $w_1 \in \F_2$. Therefore, using \eqref{fN-from-PB-3}, we get 
$$
T^{\F_2}_{m,f} (w_1 x^{q_2}_{23} w_1^{-1}) = 
T^{\PB_3}_{m,f} (w_1 c^k x^{q_2}_{23} c^{-k} w_1^{-1} ) = 
$$
$$
T^{\PB_3}_{m,f} (w x^{q_2}_{23}  w^{-1} ) = 
f\, (f^{-1} x_{23} f ) \, f^{-1} \N_{\PB_3} = x_{23} \N_{\PB_3}. 
$$
Thus, $T^{\F_2}_{m,f} (w_1 x^{q_2}_{23} w_1^{-1}) = x_{23} \N_{\F_2}$. 

Since both generators $x_{12} \N_{\F_2}$ and $x_{23} \N_{\F_2}$ of $\F_2 /\N_{\F_2}$ 
belong to $T^{\F_2}_{m,f}(\F_2)$, we proved that $T^{\F_2}_{m,f}$ is indeed onto.
\end{proof}

\subsubsection{$\GTh$ versus the groupoid $\GTSh$}
\label{sec:GTh-v-GTSh}

Just as in \cite{GTshadows}, we denote by $\cI$ the standard 
morphism $\PaB^{\le 4} \to \wh{\PaB}^{\le 4}$. 

Let $\hat{T} \in \GTh$ and $\N \in  \NFI_{\PB_4}(\B_4)$. It was shown in 
\cite[Section 2.4]{GTshadows} that the formula
\begin{equation}
\label{T-N}
T_{\N} := \hcP_{\N} \circ \hat{T} \circ  \cI : \PaB^{\le 4} \to \PaB^{\le 4} / \sim_{\N}
\end{equation}
defines a $\GT$-shadow with the target $\N$.

Let us prove that 
\begin{prop}  
\label{prop:GThat-NFI}
The assignment 
\begin{equation}
\label{That-acts}
\N^{\hat{T}} : =  \ker\big(\PB_4 \overset{T^{\PB_4}_{\N}}{\tto} \PB_4/\N \big)
\end{equation}
defines a right action of $\GTh$ on the set $\NFI_{\PB_4}(\B_4)$. 
Moreover, the assignment $\hat{T} \mapsto T_{\N}$ defines a functor $\PR$ from 
the corresponding transformation groupoid to $\GTSh$.
\end{prop}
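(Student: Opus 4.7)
The plan is to first check that $\N^{\hat{T}}$ actually lies in $\NFI_{\PB_4}(\B_4)$, then establish the two axioms of a right action, and finally read off functoriality of $\PR$ from the composition argument. For membership in $\NFI_{\PB_4}(\B_4)$, observe that since $T_\N = \hcP_\N \circ \hat{T} \circ \cI$ is a $\GT$-shadow with target $\N$, its restriction $T^{\PB_4}_\N : \PB_4 \to \PB_4/\N$ is a homomorphism into a finite group, so $\N^{\hat{T}}$ is automatically a finite-index subgroup of $\PB_4$ contained in $\PB_4$. The nontrivial issue is normality in $\B_4$: I would exploit the fact that $T_\N$ is a morphism of truncated operads in groupoids, in particular a functor on $\PaB(4)$. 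Identifying $\PB_4$ with $\Aut_{\PaB(4)}(o)$ for a fixed object $o$, every $b \in \B_4$ is realized as some morphism $o \to \si(o)$ in $\PaB(4)$ whose conjugation action on $\PB_4$ is intertwined by $T_\N$ with the corresponding conjugation in $\PaB(4)/{\sim_\N}$; hence $\ker T^{\PB_4}_\N$ is stable under $\B_4$-conjugation.

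For the right action axioms, the identity axiom is immediate: when $\hat{T} = \id$ one has $T_\N = \cP_\N$, whose kernel on $\PB_4$ is $\N$, so $\N^{\id} = \N$. The composition axiom is the substantive step. Pick $\hat{T}_1, \hat{T}_2 \in \GTh$ and set $\K := \N^{\hat{T}_1}$. By definition of $\K$, the $\GT$-shadow associated to $\hat{T}_1$ factors as in \eqref{T-m-f-isom}, giving an isomorphism $\pi_{\K, \N} : \PaB^{\le 4}/{\sim_\K} \iso \PaB^{\le 4}/{\sim_\N}$ with $\pi_{\K, \N} \circ \cP_\K = \hcP_\N \circ \hat{T}_1 \circ \cI$. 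Since both $\pi_{\K, \N} \circ \hcP_\K$ and $\hcP_\N \circ \hat{T}_1$ are continuous maps $\wh{\PaB}^{\le 4} \to \PaB^{\le 4}/{\sim_\N}$ agreeing on the dense image of $\cI$, they coincide throughout $\wh{\PaB}^{\le 4}$, and consequently
\[
\hcP_\N \circ \hat{T}_1 \circ \hat{T}_2 \circ \cI = \pi_{\K, \N} \circ \hcP_\K \circ \hat{T}_2 \circ \cI.
\]
The left-hand side is the $\GT$-shadow associated to $\hat{T}_1 \hat{T}_2$ and target $\N$, while the right-hand side is $\pi_{\K, \N}$ composed with the $\GT$-shadow associated to $\hat{T}_2$ and target $\K$. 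Since $\pi_{\K, \N}$ is an isomorphism, kernels on $\PB_4$ of both sides agree, yielding $\N^{\hat{T}_1 \hat{T}_2} = \K^{\hat{T}_2} = (\N^{\hat{T}_1})^{\hat{T}_2}$.

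Functoriality of $\PR$ now falls out of the same computation. Under the identification of $\GT$-shadows with isomorphisms between quotients of $\PaB^{\le 4}$, the morphism $\PR(\hat{T}_1)$ is precisely $\pi_{\K, \N}$, so the displayed identity reads $\PR(\hat{T}_1 \hat{T}_2) = \PR(\hat{T}_1) \circ \PR(\hat{T}_2)$ in $\GTSh$; preservation of identities is the same calculation as the identity axiom. I expect the principal obstacle to be the $\B_4$-normality of $\N^{\hat{T}}$: it cannot be deduced from the purely group-theoretic data of $T^{\PB_4}_\N$ alone and requires using that $T_\N$ is an operad morphism in \emph{groupoids}, exploiting the functoriality of $T_\N$ on morphisms of $\PaB(4)$ that permute objects. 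Everything else is a clean diagram chase resting on the density of $\cI(\PaB^{\le 4})$ in $\wh{\PaB}^{\le 4}$ and associativity of composition.
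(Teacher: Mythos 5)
Your proof is correct and follows essentially the same route as the paper: the key step in both is the factorization $\hcP_{\N}\circ\hat{T}_1 = T^{\isom}_{1,\N}\circ\hcP_{\N^{\hat{T}_1}}$ (your $\pi_{\K,\N}$), established by density of the image of $\cI$, and then chaining two such squares to get both $(\N^{\hat{T}_1})^{\hat{T}_2}=\N^{\hat{T}_1\circ\hat{T}_2}$ and the functoriality $T^{\isom}_{\N}=T^{\isom}_{1,\N}\circ T^{\isom}_{2,\N^{(1)}}$ at once. The only difference is that you re-derive the membership $\N^{\hat{T}}\in\NFI_{\PB_4}(\B_4)$ (in particular normality in $\B_4$) from the operad-in-groupoids structure, whereas the paper simply invokes \cite[Proposition 2.11]{GTshadows}, which already states that $\ker(T^{\PB_4}_{m,f})$ is an element of $\NFI_{\PB_4}(\B_4)$; your sketch of that point is the right idea.
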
  
\begin{proof}
It is clear that, if $\hat{T}$ is the identity element of $\GTh$, then 
$$
T_{\N} = \cP_{\N} : \PaB^{\le 4} \to \PaB^{\le 4} / \sim_{\N}.
$$

For every $\hat{T} \in \GTh$, we have the following commutative diagram 
of maps of truncated operads:
\begin{equation}
\label{diag-T-N}
\begin{tikzpicture}
\matrix (m) [matrix of math nodes, row sep=1.5em, column sep=2.1em]
{\wh{\PaB}^{\le 4} & \wh{\PaB}^{\le 4} \\
 \PaB^{\le 4}/ \sim_{\N^{\hat{T}}} ~ &  ~ \PaB^{\le 4}/ \sim_{\N}\,. \\};
\path[->, font=\scriptsize]
(m-1-1) edge node[above] {$\hat{T}$} (m-1-2) edge node[left] {$\hcP_{\N^{\hat{T}}}$} (m-2-1)
(m-2-1) edge node[above] {$T^{\isom}_{\N}$} (m-2-2) 
(m-1-2) edge  node[right] {$\hcP_{\N}$} (m-2-2);
\end{tikzpicture}
\end{equation}

Let $\hat{T}_1, \hat{T}_2 \in \GTh$, $\N \in \NFI_{\PB_4}(\B_4)$, 
$\N^{(1)} : = \N^{\hat{T}_1}$ and $\N^{\ms} : = (\N^{(1)})^{\hat{T}_2}$. 
Combining the corresponding commutative diagrams for $\hat{T}_1$ and $\hat{T}_2$,  
we get the commutative diagram: 
\begin{equation}
\label{diag-T2-T1-N}
\begin{tikzpicture}
\matrix (m) [matrix of math nodes, row sep=2.3em, column sep=2.3em]
{ ~~ & \wh{\PaB}^{\le 4} & \wh{\PaB}^{\le 4} &  \wh{\PaB}^{\le 4} \\
\PaB^{\le 4}  & \PaB^{\le 4}/ \sim_{\N^{\ms}}  &  \PaB^{\le 4}/ \sim_{\N^{(1)}} &   \PaB^{\le 4}/ \sim_{\N} \\};
\path[->, font=\scriptsize]
(m-2-1) edge node[above] {$\cP_{\N^{\ms}}$} (m-2-2)
 edge node[above] {$\cI$} (m-1-2)
(m-1-2) edge node[above] {$\hat{T}_2$} (m-1-3) edge node[left] {$\hcP_{\N^{\ms}}$} (m-2-2)
(m-1-3) edge node[above] {$\hat{T}_1$} (m-1-4) edge node[right] {$\hcP_{\N^{(1)}}$} (m-2-3)
(m-1-4) edge node[right] {$\hcP_{\N}$} (m-2-4)
(m-2-2) edge node[above] {$T^{\isom}_{2, \N^{(1)} }$} (m-2-3)
(m-2-3) edge node[above] {$T^{\isom}_{1, \N}$} (m-2-4)
(m-2-1) edge[bend right = 18] node[below] {$T_{\N}$} (m-2-4);
\end{tikzpicture}
\end{equation}
where $\hat{T} := \hat{T}_1 \circ \hat{T}_2$ and $T_{\N}$ is the corresponding
$\GT$-shadow with the target $\N$. 

Thus 
$$
(\N^{\hat{T}_1})^{\hat{T}_2} = \N^{\hat{T}_1 \circ \hat{T}_2}.
$$
We proved that formula \eqref{That-acts} defines a (right) action of $\GTh$ on 
$\NFI_{\PB_4}(\B_4)$. 

Let us denote by $\GTh_{\NFI}$ the corresponding transformation groupoid. The set of objects of 
$\GTh_{\NFI}$ is $\NFI_{\PB_4}(\B_4)$ and, for $\N^{(1)}, \N^{(2)} \in \NFI_{\PB_4}(\B_4)$,
the set of morphisms $\GTh_{\NFI}(\N^{(1)},\N^{(2)})$ consists of elements $\hat{T} \in \GTh$ such 
that $(\N^{(2)})^{\hat{T}} = \N^{(1)}$.

The diagram in \eqref{diag-T2-T1-N} tells us that, if $\hat{T}:= \hat{T}_1 \circ \hat{T}_2$, then 
$$
T^{\isom}_{\N} = T^{\isom}_{1, \N} \circ T^{\isom}_{2, \N^{(1)}}\,.
$$
Thus the assignment $\hat{T} \mapsto T_{\N}$ indeed
defines a functor $\PR$ from $\GTh_{\NFI}$ to $\GTSh$.
(On the level of objects, the functor $\PR$ operates as the identity map.)
\end{proof}

\bigskip

Recall \cite[Section 2.6]{GTshadows} that a $\GT$-shadow $[m,f] \in \GT(\N)$ is called \e{genuine} if
there exists $\hat{T} \in \GTh$ such that $T_{m,f} = T_{\N}$, i.e. $[m,f]$ comes from 
an element of $\GTh$. If such $\hat{T} \in \GTh$ does not exist then the $\GT$-shadow 
$[m,f]$ is called \e{fake}.

It was shown in \cite[Section 2]{GTshadows} that genuine $\GT$-shadows satisfy additional conditions. 

A $\GT$-shadow is called \e{practical}, if it can be represented by a pair $(m,f)$ 
where\footnote{Recall that $\F_2$ is identified with the subgroup $\lan x_{12}, x_{23} \ran \le \PB_3$.} 
$f \in \F_2$. Since every genuine $\GT$-shadow is practical\footnote{See, for example,  
\cite[Proposition 2.20]{GTshadows}.}, in this paper, we assume that all $\GT$-shadows are practical. 
In particular, $\GT(\N)$ denotes the set of all practical $\GT$-shadows with the 
target $\N$. Furthermore, we will use the same notation $\GTSh$ for the (sub)groupoid 
of a practical $\GT$-shadows. 

If $[m_1,f_1] \in \GTSh(\N^{(2)}, \N^{(1)})$, $[m_2,f_2] \in \GTSh(\N^{(3)}, \N^{(2)})$ and 
\begin{equation}
\label{compose}
\begin{array}{c}
m : = 2 m_1 m_2 + m_1 + m_2\,, \\[0.3cm]
f(x,y) : = f_1(x,y)\, f_2(x^{2 m_1+1}, f_1(x,y)^{-1} y^{2 m_1+1} f_1(x,y)),
\end{array}
\end{equation}
then the pair $(m,f)$ represents a $\GT$-shadow in $\GTSh(\N^{(3)}, \N^{(1)})$
and $[m,f] := [m_1,f_1] \circ [m_2,f_2]$, i.e. formula \eqref{compose} defines the 
composition of (practical) $\GT$-shadows. For more details, see \cite[Remark 2.15]{GTshadows}.

It was proved in \cite[Proposition 2.20]{GTshadows} that, for every genuine 
$\GT$-shadow $[m,f] \in \GT(\N)$, the coset $f\N_{\F_2}$ 
belongs to 
the commutator subgroup 
$[\F_2/\N_{\F_2}, \F_2/\N_{\F_2}]$
\begin{equation}
\label{pty-charming}
f \N_{\F_2} \in [\F_2/\N_{\F_2}, \F_2/\N_{\F_2}].
\end{equation}
$\GT$-shadows satisfying this additional property are called \e{charming}.

Just as in \cite{GTshadows}, the notation $\GT^{\hs}(\N)$ is reserved for the subset 
of charming $\GT$-shadows in $\GT(\N)$. Due to \cite[Proposition 2.22]{GTshadows}, 
charming $\GT$-shadows form a subgroupoid $\GTSh^{\hs}$ of $\GTSh$. 
\begin{remark}  
\label{rem:onto-is-redundant}
Due to Proposition \ref{prop:T-F-2-onto}, the condition about the homomorphism 
$T^{\F_2}_{m,f}$ in \cite[Definition 2.19]{GTshadows} is redundant. In other words, 
a $\GT$-shadow $[m,f] \in \GT(\N)$ is charming if and only if condition \eqref{pty-charming}
is satisfied.
\end{remark}
\begin{remark}  
\label{rem:to-GTSh-hs}
Since, for every $\hat{T} \in \GTh$ and $\N \in \NFI_{\PB_4}(\B_4)$, the $\GT$-shadow
$T_{\N} \in \GT(\N)$ is charming, the functor $\PR : \GTh_{\NFI} \to \GTSh$ 
from Proposition \ref{prop:GThat-NFI} lands in the subgroupoid $\GTSh^{\hs}$. 
\end{remark}

\bigskip
Recall that the groupoid $\GTSh^{\hs}$ is highly disconnected. Indeed, if $\K, \N \in \NFI_{\PB_4}(\B_4)$
have different indices in $\PB_4$, then $\GTSh^{\hs}(\K, \N)$ is empty. Just as in \cite{GTshadows}, 
$\GTSh^{\hs}_{\conn}(\N)$ denotes the connected component of $\N$ in the groupoid $\GTSh^{\hs}$. 
Elements $\N \in \NFI_{\PB_4}(\B_4)$ for which the connected component 
$\GTSh^{\hs}_{\conn}(\N)$ has exactly one object are called \e{isolated} and they play a special role:

\begin{itemize}

\item For every isolated element $\N \in \NFI_{\PB_4}(\B_4)$, $\GT^{\hs}(\N)$ is a finite group. 

\item Due to \cite[Proposition 3.3]{GTshadows}, the subposet  $\NFI^{isolated}_{\PB_4}(\B_4)$ of isolated 
elements in $\NFI_{\PB_4}(\B_4)$ is coinitial: for every $\N \in \NFI_{\PB_4}(\B_4)$, there exists 
$\K \in \NFI^{isolated}_{\PB_4}(\B_4)$ such that $\K \le \N$.  

\item Due to \cite[Proposition 3.6]{GTshadows}, $\NFI^{isolated}_{\PB_4}(\B_4)$ is closed under 
taking finite intersections. 

\item Due to \cite[Proposition 3.7]{GTshadows}, the assignment $\N \to \GT^{\hs}(\N)$ upgrades 
to a functor $\ML$ from the poset  $\NFI^{isolated}_{\PB_4}(\B_4)$ to the category of finite groups. 

\item Finally, due to \cite[Theorem 3.8]{GTshadows}, the limit of $\ML$ is isomorphic to $\GTh$. 

\end{itemize}

\subsection{Permutation pairs, permutation triples and child's drawings}
\label{sec:dessins}

A \e{permutation pair} of degree $d$ is a pair $c = (c_1,c_2) \in S_d \times S_d$ for which the 
subgroup $\lan c_1, c_2 \ran \le S_d$ is transitive. We consider permutation pairs with the 
action of $S_d$ by conjugation
$$
h(c) := (h c_1 h^{-1}, h c_2 h^{-1}), \qquad h \in S_d.
$$ 

Recall that a \e{child's drawing} of degree $d$ is a conjugacy class $[c]$ of a 
permutation pair $c = (c_1, c_2)$. Let us denote by $\ct$ the standard map 
from $S_d$ to the set $\fP_d$ of partitions of $d$: for $h \in S_d$, $\ct(h)$ is the 
cycle structure of $h$.

The (conjugacy class of the) permutation group  $\lan c_1, c_2 \ran \le S_d$ is called 
the \e{monodromy group} of the child's drawing $[c]$. 

In the literature\footnote{This list of references is far from complete.}  \cite{AADKKLNS}, 
\cite{dynamical},  \cite{Jordan}, \cite{Girondo}, \cite{Goins}, \cite{Sketch}, \cite{Lando-Z}, 
\cite{Leila-dessins}, \cite{Voight-Belyi}, \cite{Zapponi},
child's drawings (of degree $d$) are often represented by permutation triples, i.e. 
elements $(g_1, g_2, g_3) $ in $(S_d)^3$ such that 

\begin{itemize}

\item $g_1 g_2 g_3 = \id$ and
 
\item the subgroup $\lan g_1, g_2, g_3 \ran \le S_d$ is transitive.

\end{itemize}
The assignment
\begin{equation}
\label{pairs-to-triples}
(c_1, c_2) \mapsto (c_1, c_2, c_2^{-1} c_1^{-1})
\end{equation}
gives us an obvious bijection from the set of permutation pairs (of degree $d$) to the set 
of permutation triples (of degree $d$). $S_d$ acts on permutation triples by conjugation
$$
h (g_1, g_2, g_3) := (h g_1 h^{-1},  h g_2 h^{-1}, h g_3 h^{-1})
$$
and this action is compatible with bijection \eqref{pairs-to-triples}.

The triple of partitions $(\ct(c_1), \ct(c_2), \ct(c_2^{-1} c_1^{-1}))$ is called the \e{passport} of 
a child's drawing $[c]$.

\subsection{Representing child's drawings by group homomorphisms}
\label{sec:via-homomorph}

It is convenient to represent a child's drawing $[c]$ of degree $d$ by the group homomorphism 
$\psi: \F_2 \to S_d$: 
$$
\psi(x):= c_1, \qquad \psi(y):= c_2.
$$
So we denote by 
\begin{equation}
\label{Hom-tran-Sd}
\Hom_{\tran}(\F_2, S_d)
\end{equation}
the set of group homomorphisms $\psi: \F_2 \to S_d$ for which $\psi(\F_2)$ is transitive. 

Two homomorphisms $\psi, \ti{\psi} \in \Hom_{\tran}(\F_2, S_d)$ represent the 
same child's drawing if and only if $\exists~ h \in S_d$ such that 
$$
\ti{\psi}(w) = h \psi(w) h^{-1}, \qquad \forall~ w \in \F_2.
$$
In other words, the set of child's drawing of degree $d$ can be identified with the set 
of orbits 
\begin{equation}
\label{Hom-tran-Sd-orbits}
\Hom_{\tran}(\F_2, S_d)_{S_d}
\end{equation}
of the $S_d$-action on $\Hom_{\tran}(\F_2, S_d)$ by conjugation.  

Note that $\ker(\psi)$ depends only on the child's drawing $[\psi]$
but not on a particular choice of a representative $\psi \in \Hom_{\tran}(\F_2, S_d)$.  

\bigskip

Child's drawings can also be represented by continuous group homomorphisms from $\Fh_2$ to $S_d$. 
The goal of the following proposition is to recall this equivalent description:
\begin{prop}  
\label{prop:no-hat-yes-hat}
Let $\Hom(\Fh_2, S_d)$ be the set of continuous group 
homomorphisms\footnote{Recall that every finite group (for example, $S_d$) is considered 
with the discrete topology.}
$\Fh_2 \to S_d$ and 
$$
\Hom(\Fh_2, S_d)_{\tran} := \big\{ \hat{\psi} \in \Hom(\Fh_2, S_d) ~|~ 
\hat{\psi}(\Fh_2) \txt{ is a transitive subgroup of } S_d \big\}.
$$
The assignment 
\begin{equation}
\label{restrict-F2}
\hat{\psi} \mapsto \hat{\psi}\big|_{\F_2}
\end{equation}
gives us a bijection from $\Hom(\Fh_2, S_d)$ (resp. from $\Hom_{\tran}(\Fh_2, S_d)$)
to $\Hom(\F_2, S_d)$ (resp. to $\Hom_{\tran}(\F_2, S_d)$). 
This assignment is compatible with the action of $S_d$ by conjugation 
and it gives us a bijection between the set 
\begin{equation}
\label{Fhat-Sd-orbits}
\Hom_{\tran}(\Fh_2, S_d)_{S_d}
\end{equation}
of orbits of the $S_d$-action and the set of child's drawings of degree $d$.  
\end{prop}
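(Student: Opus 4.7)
The plan is to derive both bijections from the universal property of the profinite completion together with the density of $\F_2$ inside $\Fh_2$. First, I would recall that $\F_2$ is residually finite, so the canonical map $\F_2 \to \Fh_2$ is injective with dense image, and therefore restriction along this map makes sense and gives the assignment \eqref{restrict-F2}. The key input is that $S_d$ is finite and carries the discrete topology: the universal property says that every group homomorphism $\psi : \F_2 \to S_d$ has finite image, factors through a finite quotient of $\F_2$, and hence extends uniquely to a continuous homomorphism $\hat{\psi} : \Fh_2 \to S_d$. This extension operation is a two-sided inverse to \eqref{restrict-F2}, proving the bijection between $\Hom(\Fh_2, S_d)$ and $\Hom(\F_2, S_d)$.

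Next I would check that restriction matches transitive homomorphisms with transitive ones. One direction is immediate: if $\hat{\psi}|_{\F_2}(\F_2)$ acts transitively on $\{1, 2, \dots, d\}$, then $\hat{\psi}(\Fh_2)$ also does, since it contains $\hat{\psi}(\F_2)$. For the converse, I would use that $\F_2$ is dense in $\Fh_2$ while $S_d$ is discrete: by continuity $\hat{\psi}(\F_2)$ is dense in $\hat{\psi}(\Fh_2)$, but $\hat{\psi}(\Fh_2)$ is a finite discrete subset of $S_d$, so in fact $\hat{\psi}(\F_2) = \hat{\psi}(\Fh_2)$. Thus transitivity of the image is equivalent on the two sides, and \eqref{restrict-F2} restricts to a bijection between $\Hom_{\tran}(\Fh_2, S_d)$ and $\Hom_{\tran}(\F_2, S_d)$.

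Finally, the $S_d$-equivariance of \eqref{restrict-F2} is automatic: conjugation by $h \in S_d$ commutes with the restriction to $\F_2$, because it acts on targets rather than sources. Hence the bijection passes to orbit sets, identifying \eqref{Fhat-Sd-orbits} with the set $\Hom_{\tran}(\F_2, S_d)_{S_d}$ of orbits described in Section \ref{sec:via-homomorph}, which was already identified with the set of child's drawings of degree $d$. I do not anticipate any genuine obstacle; the only slightly subtle point is the density argument showing $\hat{\psi}(\F_2) = \hat{\psi}(\Fh_2)$, but this is a standard consequence of continuity into a finite discrete target.
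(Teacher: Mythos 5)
Your proposal is correct and follows essentially the same route as the paper: the paper's proof simply makes the universal-property argument explicit by constructing the extension as $\hat{\psi} := \psi_{\K} \circ \hcP_{\K}$ with $\K = \ker(\psi)$, and it uses the same density-plus-Hausdorff argument for uniqueness and the same observation that $\hat{\psi}(\F_2) = \hat{\psi}(\Fh_2)$ for the transitivity and orbit statements. No gaps.
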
  
\begin{proof} We will prove this proposition by constructing the inverse of the assignment 
in \eqref{restrict-F2}. 

Let $\psi \in \Hom(\F_2, S_d)$ and $\K : = \ker(\psi)$. The homomorphism 
$\psi: \F_2 \to S_d$ factors as follows
$$
\psi = \psi_{\K} \circ \cP_{\K},
$$
where the homomorphism $\psi_{\K} : \F_2/\K \to S_d$ is defined by the formula 
\begin{equation}
\label{psi-K}
\psi_{\K}(w \K) := \psi(w).
\end{equation}

We denote by $\hat{\psi}$ the continuous homomorphism $\Fh_2 \to S_d$ defined 
by the formula 
\begin{equation}
\label{psi-hat}
\hat{\psi} := \psi_{\K} \circ \hcP_{\K}\,.
\end{equation}

We claim that the assignment 
$$
\psi \mapsto \hat{\psi}
$$
gives us the inverse of the map from $\Hom(\Fh_2, S_d)$ to  $\Hom(\F_2, S_d)$
defined in \eqref{restrict-F2}.

Indeed, $\hat{\psi} \big|_{\F_2}$ clearly coincides with $\psi$. 
Thus it remains to show that, if 
\begin{equation}
\label{psi-from-vf-hat}
\psi = \hat{\vf}\big|_{\F_2}  
\end{equation}
for $\hat{\vf} \in \Hom(\Fh_2, S_d)$, then $\hat{\psi}$ coincides with $\hat{\vf}$. 

Let $K:= \ker(\psi)$ and $\psi_{\K}$ be the homomorphism $\F_2/\K \to S_d$ defined by 
\eqref{psi-K}, where $\psi$ is defined in \eqref{psi-from-vf-hat}. As above, we set 
$\hat{\psi}:= \psi_{\K} \circ \hcP_{\K}$ and observe that 
$$
\hat{\psi}\big|_{\F_2} = \hat{\vf}\big|_{\F_2}.  
$$
Since $\F_2$ is dense in $\Fh_2$, $S_d$ is Hausdorff and $\hat{\psi}$, $\hat{\vf}$ are continuous, we conclude 
that $\hat{\psi} = \hat{\vf}$. 

It is easy to see that, for every $\hat{\psi} \in \Hom(\Fh_2, S_d)$, $\psi(\F_2) = \hat{\psi}(\Fh_2)$. 
Moreover the assignments \eqref{restrict-F2} and $\psi \mapsto \hat{\psi}$ are compatible with 
the (adjoint) action of $S_d$. Thus the remaining statements of the proposition are obvious. 
\end{proof}
\begin{remark}  
\label{rem:extend-by-continuity}
In the above proof, we showed that every group homomorphism $\psi: \F_2 \to S_d$
extends uniquely to a continuous group homomorphism $\hat{\psi}: \Fh_2 \to S_d$. 
Equation \eqref{psi-hat} is an explicit definition of this extension. 
See also \cite[Lemma 1.1.16, (b)]{RZ-profinite}.
\end{remark}
\begin{remark}  
\label{rem:represent-by-psi-hat}
Depending on the context, we will represent a child's drawing of degree $d$ by a group homomorphism 
$\psi: \F_2 \to S_d$ or by a continuous group homomorphism $\hat{\psi}: \Fh_2 \to S_d$.  
Of course, due to Proposition \ref{prop:no-hat-yes-hat}, we have 
$$
[\hat{\psi}] = \big[ \hat{\psi} \big|_{\F_2} \big]
$$
for every $\hat{\psi} \in \Hom_{\tran}(\Fh_2, S_d)$.
\end{remark}
\begin{remark}  
\label{rem:conj-finite-index}
There is a natural bijection between the set $\Hom_{\tran}(\F_2, S_d)_{S_d}$ and 
the set of conjugacy classes of index $d$ subgroups $H$ in $\F_2$. This bijection sends
$[\psi] \in \Hom_{\tran}(\F_2, S_d)_{S_d}$ to the conjugacy class (in $\F_2$) of the 
subgroup\footnote{Since the subgroup $\psi(\F_2) \le S_d$ acts transitively on $\{1,2,\dots, d\}$,
the subgroup 
$H_{\psi, i} := \{h \in \F_2 ~|~ \psi(h)(i) = i\} \le \F_2$ is conjugate to $H_{\psi}$ in $\F_2$ for every 
$i \in \{1,2, \dots, d\}$.}
$$
H_{\psi} := \{h \in \F_2 ~|~ \psi(h)(1) = 1\}.
$$
The inverse of this correspondence operates as follows: given a subgroup $H \le \F_2$ of index 
$d$, we choose a bijection between the set $\F_2/H$ of left cosets of $H$ and the set 
$\{1,2, \dots, d\}$; then the canonical action of $\F_2$ on $\F_2/H$ gives us 
a desired group homomorphism $\psi : \F_2 \to S_d$. Thus every child's drawing of degree $d$
can be represented by an index $d$ subgroup in $\F_2$. 
\end{remark}
\begin{remark}  
\label{rem:coverings-Belyi-pairs}
Recall \cite[Section 1.3]{Hatcher} that conjugacy classes of index $d$ subgroups in $\F_2$ 
are in bijection with connected degree $d$ covering spaces of  $\CP^1 \setminus \{0,1,\infty\}$. 
We say that a child's drawing $[H]$ represented by a finite index subgroup $H \le \F_2$ is \e{Galois}
if the corresponding covering of $\CP^1 \setminus \{0,1,\infty\}$ is Galois. Due to \cite[Proposition 1.39]{Hatcher}, 
the child's drawing $[H]$ is Galois if and only if $H$ is a normal subgroup of $\F_2$.  

Recall that a \e{Belyi pair} is a pair $(X,\ga)$, where $X$ is a smooth projective curve 
defined over $\ol{\bbQ}$ and $\ga : X \to \bbP^1_{\ol{\bbQ}}$ is a morphism of curves
unramified outside of the set $\{0,1,\infty\}$. 
Using \cite[Proposition 4.5.13]{Szamuely} and \cite[Theorem 4.6.10]{Szamuely}, one can 
show\footnote{Some mathematicians also like to cite \cite{SGA1}.} that isomorphism classes 
of connected degree $d$ coverings of  $\CP^1 \setminus \{0,1,\infty\}$ are in bijection 
with isomorphism classes of degree $d$ Belyi pairs. Thus child's drawings 
can be also represented by Belyi pairs. The standard action of $G_{\bbQ}$ on 
child's drawings is often defined in terms of Belyi pairs.
  
For more details, we refer to the reader to books \cite{Girondo}, \cite{Lando-Z},  
blog \cite{Goins}, and survey \cite{Voight-Belyi}.
The curious reader may also try to ``surf through'' the database 
of Belyi pairs \cite{Belyi_database}. 
\end{remark}

\subsection{The action of $\GTh$ on child's drawings and the functor $\cA: \GTh_{\NFI} \to \Dessin$}
\label{sec:GTh-action}

The goal of the following proposition is to recall the action of $\GTh$ on child's drawings. 
\begin{prop}  
\label{prop:GTh-acts}
Let $\hat{T} \in \GTh$ and  $\hat{T}_{\Fh_2}$ be the corresponding continuous automorphism 
of $\Fh_2 \le \wh{\PB}_3 = \PaB\big( (12)3,1(23) \big)$. 
Then the formula
\begin{equation}
\label{GTh-acts}
\hat{\psi}^{\hat{T}} := \hat{\psi} \circ \hat{T}_{\Fh_2}
\end{equation}
defines a right action of $\GTh$ on the set $\Hom(\Fh_2, S_d)$. 
This action descends to the action of $\GTh$ on $\Hom_{\tran}(\Fh_2, S_d)$ and on 
$\Hom_{\tran}(\Fh_2, S_d)_{S_d}$. 
\end{prop}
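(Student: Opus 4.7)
The proposition is essentially a bookkeeping statement, and I would prove it in four short, routine steps, none of which should present a serious obstacle.

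First, I would verify that for every $\hat{T} \in \GTh$ and every $\hat{\psi} \in \Hom(\Fh_2, S_d)$, the composite $\hat{\psi} \circ \hat{T}_{\Fh_2}$ is again a continuous group homomorphism $\Fh_2 \to S_d$. This is immediate: by construction $\hat{T}_{\Fh_2}$ is a continuous group automorphism of $\Fh_2 \le \wh{\PB}_3 = \wh{\PaB}\bigl((1,2)3, 1(2,3)\bigr)$, being the restriction of the continuous operad automorphism $\hat{T}$, and the composition of continuous group homomorphisms into the discrete group $S_d$ is continuous.

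Second, I would check the two axioms of a right action. The identity axiom is clear: when $\hat{T}$ is the identity of $\GTh$, the induced map $\hat{T}_{\Fh_2}$ is the identity of $\Fh_2$, so $\hat{\psi}^{\hat{T}} = \hat{\psi}$. For compatibility with composition, I would observe that since the group operation in $\GTh$ is composition of operad automorphisms, $(\hat{T}_1 \circ \hat{T}_2)_{\Fh_2} = \hat{T}_{1,\Fh_2} \circ \hat{T}_{2,\Fh_2}$; then associativity of function composition gives
\[
(\hat{\psi}^{\hat{T}_1})^{\hat{T}_2}
\;=\; \hat{\psi} \circ \hat{T}_{1,\Fh_2} \circ \hat{T}_{2,\Fh_2}
\;=\; \hat{\psi} \circ (\hat{T}_1 \circ \hat{T}_2)_{\Fh_2}
\;=\; \hat{\psi}^{\hat{T}_1 \circ \hat{T}_2},
\]
which is precisely the right-action identity.

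Third, I would note that $\hat{T}_{\Fh_2} \in \Aut(\Fh_2)$ is in particular surjective, so $\hat{T}_{\Fh_2}(\Fh_2) = \Fh_2$ and hence $\hat{\psi}^{\hat{T}}(\Fh_2) = \hat{\psi}\bigl(\hat{T}_{\Fh_2}(\Fh_2)\bigr) = \hat{\psi}(\Fh_2)$ as subgroups of $S_d$. Thus transitivity is not merely preserved up to conjugacy but the image is literally unchanged, which shows that the action restricts to $\Hom_{\tran}(\Fh_2, S_d)$. Finally, to see the action descends to $S_d$-orbits, I would check compatibility with conjugation: if $\hat{\psi}_2 = h\, \hat{\psi}_1\, h^{-1}$ for some $h \in S_d$, then since $h$ acts on the target of the homomorphism while $\hat{T}_{\Fh_2}$ acts on the source, the two operations commute and $\hat{\psi}_2^{\hat{T}} = h\, \hat{\psi}_1^{\hat{T}}\, h^{-1}$. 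The ``hard part,'' if any, is merely keeping track of whether one is composing on the source or the target side — which is exactly what forces this to be a \emph{right} action — but once that is set up there is no further obstacle.
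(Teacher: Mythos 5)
Your proposal is correct and follows essentially the same route as the paper's own proof: functoriality of $\hat{T}\mapsto\hat{T}_{\Fh_2}$ gives the right-action axioms, surjectivity of the automorphism $\hat{T}_{\Fh_2}$ shows the image subgroup (hence transitivity) is unchanged, and commutation with $S_d$-conjugation on the target gives the descent to orbits. The only difference is that you spell out the routine verifications that the paper declares ``obvious.''
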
  
\begin{proof}
It is obvious that, for $\hat{T}^{(1)}, \hat{T}^{(2)} \in \GTh$, 
$$
\big(\hat{T}^{(1)} \circ \hat{T}^{(2)}\big)_{\Fh_2} = \hat{T}^{(1)}_{\Fh_2} \circ \hat{T}^{(2)}_{\Fh_2}\,.
$$
Thus the first statement of the proposition is obvious. 

Since $\hat{T}_{\Fh_2}$ is an isomorphism $\Fh_2 \iso \Fh_2$, the subgroups  
$\hat{\psi} \circ \hat{T}_{\Fh_2}(\Fh_2)\le S_d$ and $\hat{\psi}(\Fh_2) \le S_d$ coincide.  
Moreover, the resulting action of $\GTh$ on $\Hom_{\tran}(\Fh_2, S_d)$ clearly commutes 
with the action of $S_d$. Hence the remaining two statements of the proposition are 
also obvious. 
\end{proof}

\bigskip
\noindent
{\bf On the Ihara embedding.} Recall \cite{Ihara} that $G_{\bbQ}$ injects into the group $\GTh$
\begin{equation}
\label{Ihara-GQ-GTh}
G_{\bbQ}  \hookrightarrow \GTh,
\end{equation}
and the standard action of $G_{\bbQ}$ on child's drawings agrees with 
homomorphism \eqref{Ihara-GQ-GTh} and the above action of $\GTh$ on child's drawings. 
For details, please see \cite[Proposition 1.6]{Ihara}, \cite[Theorem 1.7]{Ihara} and 
\cite[Section 3.2]{Leila-survey}. We will call homomorphism \eqref{Ihara-GQ-GTh} the \e{Ihara embedding}.

\bigskip

For our purposes, it is convenient to describe the action of $\GTh$ on child's drawing 
in terms of a functor from the transformation groupoid $\GTh_{\NFI}$ to certain category 
assembled from child's drawings and elements of the poset $\NFI_{\PB_4}(\B_4)$. 

\begin{defi}  
\label{dfn:subordinate}
Let $\N \in \NFI_{\PB_4}(\B_4)$ and $[\psi]$ be a child's drawing of degree $d$
represented by a homomorphism $\psi: \F_2 \to S_d$. 
We say that a child's drawing $[\psi]$ is \e{subordinate} to $\N$ if 
\begin{equation}
\label{subord-cond}
\N_{\F_2} \le \ker(\psi).
\end{equation}
If $[\psi]$ is subordinate to $\N$, then we say that $\N$ \e{dominates} $[\psi]$.  
\end{defi}   
It is easy to see that condition \eqref{subord-cond} does not depend on the 
choice of representing homomorphism $\psi$. Moreover, if a child's drawing is represented 
by a continuous homomorphism $\hat{\psi}: \Fh_2 \to S_d$ then $[\hat{\psi}]$ is subordinate 
to $\N$ if and only if $\N_{\F_2} \le  \ker(\hat{\psi}|_{\F_2})$. 
We denote by $\Dessin(\N)$ the set of child's drawings subordinate to $\N$. 

It is clear that, if $\K \le \N$
($\K,\N \in  \NFI_{\PB_4}(\B_4)$) and child's drawing $[\psi]$ is subordinate to $\N$ then 
$[\psi]$ is also subordinate to $\K$. In other words, if $\K \le \N$, then 
$\Dessin(\N) \subset \Dessin(\K)$.

Let us introduce the category $\Dessin$ whose objects are elements of $\NFI_{\PB_4}(\B_4)$. 
For $\N^{(1)}, \N^{(2)} \in \NFI_{\PB_4}(\B_4)$, morphisms from $\N^{(1)}$ to $\N^{(2)}$ are 
functions from $\Dessin(\N^{(1)})$ to $\Dessin(\N^{(2)})$.  

\begin{prop}  
\label{prop:GTh-NFI-to-Dessin}
For every $\hat{T} \in \GTh$ and $[\hat{\psi}] \in \Dessin(\N)$, 
the child's drawing represented by 
the homomorphism 
$$
\hat{\psi} \circ \hat{T}\big|_{\F_2} : \F_2 \to S_d
$$
is subordinate to $\N^{\hat{T}}$. 
Moreover, the formulas
\begin{equation}
\label{A-functor}
\A(\N):=\N, 
\qquad 
\A(\hat{T})([\hat{\psi}]) := [\hat{\psi} \circ \hat{T}]
\end{equation}
define a cofunctor
$$
\A : \GTh_{\NFI} \to \Dessin.
$$ 
\end{prop}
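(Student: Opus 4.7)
The statement has two parts: (i) $[\hat{\psi} \circ \hat{T}|_{\F_2}] \in \Dessin(\N^{\hat{T}})$ whenever $[\hat{\psi}] \in \Dessin(\N)$; (ii) the formulas in \eqref{A-functor} assemble into a cofunctor.

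For (i), the plan is to restrict the defining commutative square \eqref{diag-T-N} of $T_\N$ to the subgroup $\Fh_2 \le \wh{\PB}_3$. Using formula \eqref{T-F-2-def} together with Proposition \ref{prop:T-F-2-onto}, this restriction takes the form
\[
\hcP_{\N_{\F_2}} \circ \hat{T}_{\Fh_2}\big|_{\F_2} \;=\; T^{\isom}_{\N}\big|_{\F_2/(\N^{\hat{T}})_{\F_2}} \circ \cP_{(\N^{\hat{T}})_{\F_2}},
\]
where the right-most arrow is a bijection $\F_2/(\N^{\hat{T}})_{\F_2} \iso \F_2/\N_{\F_2}$. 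Consequently the kernel of the left-hand side equals $(\N^{\hat{T}})_{\F_2}$, so for every $w \in (\N^{\hat{T}})_{\F_2}$ the image $\hat{T}_{\Fh_2}(w)$ lies in $\ker(\hcP_{\N_{\F_2}})$, i.e.\ in the closure $\ol{\N_{\F_2}}$ of $\N_{\F_2}$ in $\Fh_2$. Because $\hat{\psi}$ is continuous, $S_d$ is discrete (hence Hausdorff), and $\hat{\psi}|_{\F_2}$ kills $\N_{\F_2}$ by hypothesis, $\hat{\psi}$ must also kill the entire closure $\ol{\N_{\F_2}}$; therefore $\hat{\psi}(\hat{T}_{\Fh_2}(w)) = 1$, proving (i).

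For (ii), well-definedness on $S_d$-orbits is immediate: if $\hat{\psi}' = h \hat{\psi}(\cdot) h^{-1}$ for some $h \in S_d$, then $\hat{\psi}' \circ \hat{T}_{\Fh_2} = h \bigl(\hat{\psi} \circ \hat{T}_{\Fh_2}\bigr)(\cdot) h^{-1}$, since conjugation in $S_d$ is a post-composition while $\hat{T}_{\Fh_2}$ acts by pre-composition. The identity axiom follows from $(1_{\GTh})_{\Fh_2} = 1_{\Fh_2}$. The contravariant composition axiom uses $(\hat{T}_1 \circ \hat{T}_2)_{\Fh_2} = (\hat{T}_1)_{\Fh_2} \circ (\hat{T}_2)_{\Fh_2}$ (functoriality of restriction to the subgroup $\Fh_2 \le \wh{\PB}_3$) together with associativity of function composition:
\[
\A(\hat{T}_1 \circ \hat{T}_2)([\hat{\psi}]) \;=\; \bigl[\hat{\psi} \circ (\hat{T}_1)_{\Fh_2} \circ (\hat{T}_2)_{\Fh_2}\bigr] \;=\; \A(\hat{T}_2)\bigl(\A(\hat{T}_1)([\hat{\psi}])\bigr).
\]

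The main obstacle lies in (i): one has to justify carefully that the $\F_2$-restriction of $T^{\isom}_{\N}$ yields a bijection of the two quotients (which comes down to the explicit form of $T^{\F_2}_{m,f}$ in \eqref{T-F-2-def} combined with the surjectivity established in Proposition \ref{prop:T-F-2-onto}) and then to upgrade ``$\hat{\psi}$ vanishes on $\N_{\F_2}$'' to ``$\hat{\psi}$ vanishes on the $\Fh_2$-closure $\ol{\N_{\F_2}}$'' via continuity and the Hausdorff property of $S_d$. Once these two points are made precise, the remaining cofunctor axioms are formal bookkeeping.
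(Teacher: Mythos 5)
Your proof is correct and follows essentially the same route as the paper: both arguments reduce part (i) to the fact that $\ker\big(\hcP_{\N_{\F_2}} \circ \hat{T}\big|_{\F_2}\big) = \N^{\hat{T}}_{\F_2}$ and then use continuity of $\hat{\psi}$, density of $\F_2$ in $\Fh_2$, and discreteness of $S_d$. The only cosmetic difference is that the paper phrases the continuity step as the factorization $\hat{\psi} = \ti{\psi} \circ \hcP_{\N_{\F_2}}$, whereas you phrase it as ``$\hat{\psi}$ kills the closure of $\N_{\F_2}$''; these are equivalent.
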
  
\begin{proof} Identifying $\F_2$ and $\wh{\F}_2$ with the corresponding 
subgroups of $\wh{\PB}_3 = \PaB\big( (1,2)3, (1,2)3 \big)$ and using 
\eqref{T-N} we see that $\N^{\hat{T}}_{\F_2}$ is the kernel of the homomorphism 
$$
\hcP_{\N_{\F_2}} \circ \hat{T}\big|_{\F_{2}} : \F_2 \to \F_2/\N_{\F_2}\,.
$$ 

Let $\psi: = \hat{\psi}|_{\F_2}$. 
Since $\N_{\F_2} \le \ker(\psi)$, $\psi$ induces the group homomorphism 
$$
\ti{\psi} :  \F_2/\N_{\F_2} \to S_d,
\qquad 
\ti{\psi}(w \N_{\F_2}) := \psi(w).
$$

Composing $\ti{\psi}$ with $\hcP_{\N_{\F_2}}$ we get a continuous 
group homomorphism $\ti{\psi} \circ \hcP_{\N_{\F_2}} : \Fh_2 \to S_d$. 
Since $\ti{\psi} \circ \hcP_{\N_{\F_2}} \big|_{\F_2} = \hat{\psi}\big|_{\F_2}$, 
$\F_2$ is dense in $\Fh_2$ and $S_d$ is Hausdorff, we conclude that 
$$
\hat{\psi} = \ti{\psi} \circ \hcP_{\N_{\F_2}}\,.
$$

Hence
\begin{equation}
\label{key4N-T-F2}
\hat{\psi} \circ  \hat{T}\big|_{\F_{2}} =   \ti{\psi} \circ (\hcP_{\N_{\F_2}} \circ \hat{T}\big|_{\F_{2}}).
\end{equation}
Since the kernel of $\hcP_{\N_{\F_2}} \circ \hat{T}\big|_{\F_{2}}$ is $\N^{\hat{T}}_{\F_2}$, 
\eqref{key4N-T-F2} implies that 
$$
\N^{\hat{T}}_{\F_2} ~\le~ \ker\big( \hat{\psi} \circ  \hat{T}\big|_{\F_{2}} \big).  
$$ 

The first statement of the proposition is proved. 

Using the statement we just proved and Proposition \ref{prop:GTh-acts} we see 
that, for all $\N^{(1)}, \N^{(2)} \in \NFI_{\PB_4}(\B_4)$, 
the second formula in \eqref{A-functor} defines a map 
$$
\GTh_{\NFI}(\N^{(1)}, \N^{(2)} ) \to \Dessin(\N^{(2)}, \N^{(1)}). 
$$ 

Moreover, since $\hat{\psi} \circ \id_{\Fh_2} = \hat{\psi}$ and 
$$
\hat{\psi}  \circ  (\hat{T}^{(1)} \circ  \hat{T}^{(2)})_{\Fh_2}  = 
(\hat{\psi} \circ \hat{T}^{(1)}_{\Fh_2}) \circ \hat{T}^{(2)}_{\Fh_2}\,, 
\quad 
\forall~~
\hat{T}^{(1)},~ \hat{T}^{(2)} \in \GTh, ~~
\hat{\psi} \in \Hom(\Fh_2, S_d),
$$ 
we conclude that the formulas in \eqref{A-functor} indeed define a cofunctor 
$\A$ from the transformation groupoid $\GTh_{\NFI}$ to the category $\Dessin$. 
\end{proof}

\bigskip

Using the Ihara embedding $G_{\bbQ} \hookrightarrow \GTh $ and the action 
of $\GTh$ on $\NFI_{\PB_4}(\B_4)$, we get the action of $G_{\bbQ}$ on  $\NFI_{\PB_4}(\B_4)$. 
We denote by  $G_{\bbQ, \NFI}$
the corresponding transformation groupoid and by $\Ih$ the natural functor 
\begin{equation}
\label{Ih-functor}
\Ih: G_{\bbQ, \NFI} \to \GTh_{\NFI}
\end{equation}
coming from the Ihara embedding.

Composing $\Ih$ with the cofunctor $\A :  \GTh_{\NFI} \to \Dessin$, we get 
a cofunctor from the groupoid $G_{\bbQ, \NFI}$ to the category $\Dessin$. 
We denote this cofunctor by $\A^{\bbQ}$: 
\begin{equation}
\label{A-bbQ}
\A^{\bbQ} :=  \A \circ \Ih : G_{\bbQ, \NFI} \to \Dessin. 
\end{equation}

\section{The action of $\GT$-shadows on child's drawings}
\label{sec:GTshadows-act}

It is convenient to introduce the action of $\GT$-shadows on child's drawings 
as a cofunctor from the groupoid $\GTSh$ to the category $\Dessin$:
\begin{thm}  
\label{thm:Action}
Let $\N^{(1)}, \N^{(2)} \in \NFI_{\PB_4}(\B_4)$, $[m,f] \in \GTSh(\N^{(2)},\N^{(1)})$ and
$[\psi] \in  \Dessin(\N^{(1)})$
be a child's drawing represented by a homomorphism $\psi : \F_2 \to S_d$. 
Let $\psi^{(m,f)}: \F_2 \to S_d$ be the homomorphism defined by the formulas
\begin{equation}
\label{def-the-action}
\psi^{(m,f)}(x) : = \psi(x^{2m+1}), \qquad 
\psi^{(m,f)}(y) : = \psi(f^{-1} y^{2m+1} f).
\end{equation}
Then 
\begin{itemize}

\item $\psi^{(m,f)}$ does not depend on the choice of the pair $(m,f)$ representing the $\GT$-shadow $[m,f]$;

\item $\psi^{(m,f)}$ represents a child's drawing of degree $d$ subordinate to $\N^{(2)}$. 

\end{itemize}
Moreover, the formulas 
\begin{equation}
\label{A-sh-cofunctor}
\A^{sh}(\N): = \N, \qquad 
\A^{sh}([m,f])([\psi]) := [\psi^{(m,f)}]
\end{equation}
define a cofunctor $\A^{sh} : \GTSh \to \Dessin$.   
\end{thm}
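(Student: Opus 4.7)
The key observation is that formula \eqref{def-the-action} identifies $\psi^{(m,f)}$ with the composition
\[
\psi^{(m,f)} = \ti{\psi} \circ T^{\F_2}_{m,f},
\]
where $T^{\F_2}_{m,f} : \F_2 \to \F_2/\N^{(1)}_{\F_2}$ is the homomorphism \eqref{T-F-2}--\eqref{T-F-2-def} and $\ti{\psi} : \F_2/\N^{(1)}_{\F_2} \to S_d$ is the homomorphism induced from $\psi$ by the subordination hypothesis $\N^{(1)}_{\F_2} \le \ker(\psi)$. Since $T^{\F_2}_{m,f}$ is determined by the $\GT$-shadow $[m,f]$ (being the restriction to $\F_2 \le \PB_3$ of the truncated-operad morphism $T_{m,f}$ of \eqref{T-m-f}), so is $\psi^{(m,f)}$; this proves the first bullet.

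For the second bullet, transitivity of $\psi^{(m,f)}(\F_2)$ follows from Proposition~\ref{prop:T-F-2-onto}: the surjectivity of $T^{\F_2}_{m,f}$ gives
\[
\psi^{(m,f)}(\F_2) = \ti{\psi}\big(T^{\F_2}_{m,f}(\F_2)\big) = \ti{\psi}(\F_2/\N^{(1)}_{\F_2}) = \psi(\F_2),
\]
so transitivity transfers from $\psi$ to $\psi^{(m,f)}$. Subordination of $[\psi^{(m,f)}]$ to $\N^{(2)}$ reduces to $\N^{(2)}_{\F_2} \le \ker(\psi^{(m,f)})$; since $\ker(\psi^{(m,f)}) \supseteq \ker(T^{\F_2}_{m,f})$, it is enough to show $\ker(T^{\F_2}_{m,f}) = \N^{(2)}_{\F_2}$. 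By definition $\N^{(2)} = \ker(T^{\PB_4}_{m,f})$, and the isomorphism $T^{\isom}_{m,f}$ of \eqref{T-m-f-isom} restricted to the arity-$3$ automorphism group $\PB_3$ forces $\ker(T^{\PB_3}_{m,f}) = \N^{(2)}_{\PB_3}$; intersecting with $\F_2 \le \PB_3$ gives $\ker(T^{\F_2}_{m,f}) = \F_2 \cap \N^{(2)}_{\PB_3} = \N^{(2)}_{\F_2}$, as desired.

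To verify that \eqref{A-sh-cofunctor} defines a cofunctor, I first treat the identity axiom: the identity morphism on $\N$ in $\GTSh$ is the $\GT$-shadow $[0, 1]$, and \eqref{def-the-action} gives $\psi^{(0,1)} = \psi$ directly. For composition, let $[m_1, f_1] \in \GTSh(\N^{(2)}, \N^{(1)})$ and $[m_2, f_2] \in \GTSh(\N^{(3)}, \N^{(2)})$, and set $[m, f] := [m_1, f_1] \circ [m_2, f_2]$ with $(m, f)$ given by \eqref{compose}. The required identity is $\psi^{(m, f)} = \big(\psi^{(m_1, f_1)}\big)^{(m_2, f_2)}$ (equality of homomorphisms $\F_2 \to S_d$, not merely of child's drawings). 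The $x$-generator is immediate because $(2m_1+1)(2m_2+1) = 2m+1$. For the $y$-generator, setting $\psi_1 := \psi^{(m_1, f_1)}$ one has the substitution rule $\psi_1\big(w(x,y)\big) = \psi\big(w(x^{2m_1+1},\, f_1^{-1} y^{2m_1+1} f_1)\big)$ for every $w \in \F_2$; applying this to $w = f_2^{-1} y^{2m_2+1} f_2$ produces $\psi(f^{-1} y^{2m+1} f)$ exactly because of how $f$ is defined in \eqref{compose}.

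The only non-routine step above is the identity $\ker(T^{\F_2}_{m,f}) = \N^{(2)}_{\F_2}$ underpinning subordination; it rests on the fact that the morphism of truncated operads $T_{m,f}$ induces an isomorphism on arity $3$ with target kernel $\N^{(1)}_{\PB_3}$, and this is exactly the content of the factorization $T_{m,f} = T^{\isom}_{m,f} \circ \cP_{\N^{(2)}}$ recalled in the preliminaries. The cofunctoriality check is in principle a bookkeeping exercise, but no surprises arise: composition law \eqref{compose} for $\GTSh$ was designed to encode precisely the iterated substitution $y \mapsto f^{-1} y^{2m+1} f$, so the verification is guaranteed to succeed.
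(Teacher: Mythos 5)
Your proof is correct and follows essentially the same route as the paper: the central identity $\psi^{(m,f)} = \ti{\psi} \circ T^{\F_2}_{m,f}$, the use of Proposition \ref{prop:T-F-2-onto} for transitivity, and the kernel identification $\ker(T^{\F_2}_{m,f}) = \N^{(2)}_{\F_2}$ are exactly the paper's argument. The only divergence is in the composition axiom, where you carry out the direct substitution via \eqref{compose} while the paper opts for the factorization $T^{\F_2}_{m,f} = T^{\F_2,\isom}_{m_1,f_1} \circ T^{\F_2}_{m_2,f_2}$ --- but the paper explicitly notes that the direct verification is an equally valid alternative, and your computation of it is accurate.
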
  
\begin{proof} Since $\N^{(1)}_{\F_2} \le \ker(\psi)$ the formula 
$$
\ti{\psi}(w \N^{(1)}_{\F_2}) : = \psi(w)
$$
defines a group homomorphism $\ti{\psi} : \F_2 /\N^{(1)}_{\F_2} \to S_d$ and 
$\ti{\psi}\big(\F_2 /\N^{(1)}_{\F_2} \big) = \psi(\F_2)$. In particular, the subgroup 
$\ti{\psi}\big(\F_2 /\N^{(1)}_{\F_2} \big)$ is transitive. 

Due to \eqref{T-F-2-def}, 
\begin{equation}
\label{psi-m-f}
\psi^{(m,f)} = \ti{\psi} \circ T^{\F_2}_{m,f}\,.
\end{equation}
Since the homomorphism $T^{\F_2}_{m,f}: \F_2 \to  \F_2 /\N^{(1)}_{\F_2}$ does not 
depend on the choice of the representative $(m,f)$ of the $\GT$-shadow $[m,f]$, 
the first statement of the proposition is proved. 

Since $T^{\F_2}_{m,f}: \F_2 \to  \F_2 /\N^{(1)}_{\F_2}$ is onto 
$\psi^{(m,f)} (\F_2) = \ti{\psi}\big(\F_2 /\N^{(1)}_{\F_2} \big)$. Hence
$\psi^{(m,f)} (\F_2)$ is a transitive subgroup of $S_d$.

We know that $\ker(T^{\F_2}_{m,f}) = \N^{(2)}_{\F_2}$. Hence \eqref{psi-m-f}
implies that 
$$
\N^{(2)}_{\F_2} \le \ker(\psi^{(m,f)}).
$$
We proved the second statement of the proposition. 

It is easy to see that $[\psi^{(m,f)}]$ does not depend on the representative 
$\psi$ of the child's drawing $[\psi]$. Thus the formula 
$$
\A^{sh}([m,f])([\psi]) := [\psi^{(m,f)}]
$$
defines a map $\GTSh(\N^{(2)}, \N^{(1)}) \to \Dessin(\N^{(1)}, \N^{(2)})$.

It is also easy to see that, for every $[\psi] \in \Dessin(\N^{(1)})$,  
$[\psi^{(0,1_{\F_2})}] = [\psi]$.

Thus it remains to show that, for all $[m_1, f_1] \in \GTSh(\N^{(2)}, \N^{(1)})$,  $[m_2, f_2] \in \GT(\N^{(2)})$,
and $[\psi] \in \Dessin(\N^{(1)})$, 
\begin{equation}
\label{A-sh-composition-ok}
\A^{sh}([m_1,f_1] \circ [m_2, f_2])([\psi]) = \big[ (\psi^{(m_1,f_1)})^{(m_2, f_2)} \big].
\end{equation}

Equation \eqref{A-sh-composition-ok} can be verified directly using \eqref{compose}. 

Here is another way to prove \eqref{A-sh-composition-ok}. 
Let $[m,f] := [m_1,f_1] \circ [m_2, f_2]$. Since the composition of $\GT$-shadows
is defined via the identification 
of elements of $\GT(\N)$ with isomorphisms \eqref{T-m-f-isom}, we have
$$
T_{m,f} =  T^{\isom}_{m_1, f_1} \circ T_{m_2, f_2}\,.
$$ 
Hence  
\begin{equation}
\label{T-F-2-composition-ok}
T^{\F_2}_{m,f} =  T^{\F_2, \isom}_{m_1, f_1} \circ T^{\F_2}_{m_2, f_2}\,.
\end{equation}

Since $\psi^{(m,f)} = \ti{\psi} \circ T^{\F_2}_{m,f}$, identity \eqref{T-F-2-composition-ok} 
implies that 
\begin{equation}
\label{A-sh-compose}
\psi^{(m,f)}  = (\ti{\psi} \circ T^{\F_2, \isom}_{m_1, f_1}) \circ T^{\F_2}_{m_2, f_2}\,.
\end{equation}

Since $\ti{\psi} \circ T^{\F_2, \isom}_{m_1, f_1} \circ \cP_{\N^{(2)}_{\F_2}} = \psi^{(m_1, f_1)}$, 
identity \eqref{A-sh-compose} implies the desired equation in \eqref{A-sh-composition-ok}. 
\end{proof}

\bigskip

The action of $\GT$-shadows on child's drawings is compatible with the action of $\GTh$ in the following sense:
\begin{thm}  
\label{thm:compatible}
Let $\PR$ be the natural functor from $\GTh_{\NFI}$ to $\GTSh^{\hs}$ introduced 
in the proof of Proposition \ref{prop:GThat-NFI} and let $\A$ be the cofunctor 
from $\GTh_{\NFI}$ to $\Dessin$ defined in Proposition \ref{prop:GTh-NFI-to-Dessin}. 
The diagram of (co)functors 
\begin{equation}
\label{A-A-sh-diag}
\begin{tikzpicture}
\matrix (m) [matrix of math nodes, row sep=1.8em, column sep=1.8em]
{\GTh_{\NFI} & ~ & \GTSh^{\hs} \\
 ~ & \Dessin & ~  \\};
\path[->, font=\scriptsize]
(m-1-1) edge node[above] {$\PR$} (m-1-3)
 edge node[left] {$\A~~$} (m-2-2)
(m-1-3) edge node[right] {$\A^{sh}$} (m-2-2);  
\end{tikzpicture}
\end{equation}
commutes ``on the nose''. 
\end{thm}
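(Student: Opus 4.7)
The plan is to unpack both sides of the asserted equality separately on objects and on morphisms. On objects the commutativity is immediate because $\PR$, $\A$, and $\A^{sh}$ all act as the identity on $\NFI_{\PB_4}(\B_4)$. So the content of the theorem is the following: for every $\hat{T} \in \GTh_{\NFI}(\N^{(2)}, \N^{(1)})$ and every child's drawing $[\hat{\psi}] \in \Dessin(\N^{(1)})$ (with $\psi := \hat{\psi}|_{\F_2}$), one must show that the homomorphism $\hat{\psi} \circ \hat{T}_{\Fh_2}|_{\F_2}$, which represents $\A(\hat{T})([\hat{\psi}])$, coincides with the homomorphism $\psi^{(m,f)}$ representing $\A^{sh}(\PR(\hat{T}))([\hat{\psi}])$, where $[m,f] := T_{\N^{(1)}} = \PR(\hat{T})$.

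The central idea is to factor both homomorphisms through the same finite quotient $\F_2/\N^{(1)}_{\F_2}$. Since $[\hat{\psi}]$ is subordinate to $\N^{(1)}$, the homomorphism $\psi$ descends to a map $\ti{\psi}: \F_2/\N^{(1)}_{\F_2} \to S_d$; by Proposition \ref{prop:no-hat-yes-hat} (and the explicit formula \eqref{psi-hat}) we have $\hat{\psi} = \ti{\psi} \circ \hcP_{\N^{(1)}_{\F_2}}$, hence
\[
\hat{\psi} \circ \hat{T}_{\Fh_2}\big|_{\F_2} \;=\; \ti{\psi} \circ \bigl(\hcP_{\N^{(1)}_{\F_2}} \circ \hat{T}_{\Fh_2}\big|_{\F_2}\bigr).
\]
On the other hand, equation \eqref{psi-m-f} from the proof of Theorem \ref{thm:Action} gives $\psi^{(m,f)} = \ti{\psi} \circ T^{\F_2}_{m,f}$. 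The theorem therefore reduces to the single identity
\[
\hcP_{\N^{(1)}_{\F_2}} \circ \hat{T}_{\Fh_2}\big|_{\F_2} \;=\; T^{\F_2}_{m,f} \;:\; \F_2 \to \F_2/\N^{(1)}_{\F_2}.
\]

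To verify this identity I would work on the two generators $x$ and $y$ of $\F_2$. If $\hat{T}$ corresponds to $(\hat{m}, \hat{f}) \in \Zhat \times \wh{\F}_2$, then by \eqref{T-hat-F2-hat} one has $\hat{T}_{\Fh_2}(x) = x^{2\hat{m}+1}$ and $\hat{T}_{\Fh_2}(y) = \hat{f}^{-1} y^{2\hat{m}+1} \hat{f}$. By the very definition \eqref{T-N} of $T_{\N^{(1)}}$ and the description of $\GT$-pairs in terms of \eqref{pair}, the integer $m \in \{0,\dots,N_{\ord}-1\}$ and the coset $f \N^{(1)}_{\F_2}$ are precisely the images of $\hat{m}$ and $\hat{f}$ under the natural projections $\Zhat \to \bbZ/N_{\ord}\bbZ$ and $\hcP_{\N^{(1)}_{\F_2}}: \wh{\F}_2 \to \F_2/\N^{(1)}_{\F_2}$. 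Because the order of $x \N^{(1)}_{\F_2}$ in $\F_2/\N^{(1)}_{\F_2}$ divides $N_{\ord}$, continuity of $\hcP_{\N^{(1)}_{\F_2}}$ gives $\hcP_{\N^{(1)}_{\F_2}}(x^{2\hat{m}+1}) = x^{2m+1}\N^{(1)}_{\F_2}$, which matches $T^{\F_2}_{m,f}(x)$ from \eqref{T-F-2-def}; the same argument applied to $y$, together with multiplicativity, yields agreement on the generator $y$. This settles the key identity on generators, and hence on all of $\F_2$.

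The only real obstacle is purely bookkeeping: one has to track the passage from the profinite pair $(\hat{m}, \hat{f})$ down through the commutative diagram \eqref{diag-T-N} to the $\GT$-pair $(m, f\N^{(1)}_{\PB_3})$, and then intersect everything with $\F_2$. No new ideas are required beyond those already used in the construction of $T_{\N}$ from $\hat{T}$ and the formula \eqref{psi-m-f} in the proof of Theorem \ref{thm:Action}; once the identifications are organized correctly, the functoriality of all the arrows makes the commutativity of \eqref{A-A-sh-diag} automatic.
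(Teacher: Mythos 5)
Your proposal is correct and follows essentially the same route as the paper: both factor $\hat{\psi}$ as $\ti{\psi}\circ\hcP_{\N^{(1)}_{\F_2}}$ and $\psi^{(m,f)}$ as $\ti{\psi}\circ T^{\F_2}_{m,f}$, thereby reducing everything to the identity $\hcP_{\N^{(1)}_{\F_2}}\circ\hat{T}\big|_{\F_2}=T^{\F_2}_{\N^{(1)}}$. The only cosmetic difference is that the paper obtains this last identity directly from the defining relation \eqref{T-N} (diagram \eqref{diag-F-2-hat}), whereas you re-verify it on the generators $x$ and $y$ via \eqref{T-hat-F2-hat} and \eqref{T-F-2-def}, which amounts to unwinding the same definition.
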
  
\begin{proof}
Since all three functors operate as the identity map on the level of objects, we need to show that, 
for every $\N \in \NFI_{\PB_4}(\B_4)$, $\hat{T} \in \GTh$ and $[\psi] \in \Dessin(\N)$ we have 
\begin{equation}
\label{actions-ok}
\hat{\psi} \circ \hat{T}\big|_{\F_2} = \ti{\psi} \circ T^{\F_2}_{\N}\,, 
\end{equation}
where $\hat{\psi} : \Fh_2 \to S_d$ is the continuous group homomorphism that extends 
$\psi: \F_2 \to S_d$, and $\ti{\psi}$ is the group homomorphism $\F_2 / \N_{\F_2} \to S_d$ defined 
by the formula 
$$
\ti{\psi} \big(w \, \N_{\F_2} \big) := \psi(w).  
$$
 
Due to the relation defining the $\GT$-shadow
$T_N :  \PaB^{\le 4} \to \PaB^{\le 4}/ \sim_{\N}$ in terms 
of $\hat{T} \in \GTh$ (see equation \eqref{T-N}), 
the diagram 
\begin{equation}
\label{diag-F-2-hat}
\begin{tikzpicture}
\matrix (m) [matrix of math nodes, row sep=2.3em, column sep=2.3em]
{\wh{\F}_2 & \wh{\F}_2 \\
\F_2  &  \F_2/\N_{\F_2}  \\};
\path[->, font=\scriptsize]
(m-1-1) edge node[above] {$\hat{T}|_{\wh{\F}_2}$} (m-1-2)  
(m-2-1) edge (m-1-1)  edge node[above] {$T_{\N}^{\F_2}$} (m-2-2)
(m-1-2) edge node[left] {$~~\hcP_{\N_{\F_2}}$} (m-2-2);
\end{tikzpicture}
\end{equation}
commutes.

Using equation \eqref{psi-hat} from the proof of Proposition \ref{prop:no-hat-yes-hat} and 
the relation $\N_{\F_2} \le \ker(\psi)$, it is easy to see that 
the diagram 
\begin{equation}
\label{diag-psi-hat-psi-tilde}
\begin{tikzpicture}
\matrix (m) [matrix of math nodes, row sep=2.3em, column sep=2.6em]
{\wh{\F}_2  & ~~ \\
\F_2/\N_{\F_2}   & S_d  \\};
\path[->, font=\scriptsize]
(m-1-1) edge node[left] {$~~\hcP_{\N_{\F_2}}$} (m-2-1) 
edge node[above] {$\hat{\psi}$} (m-2-2)
(m-2-1)   edge node[above] {$\ti{\psi}~~$} (m-2-2);
\end{tikzpicture}
\end{equation}
commutes.

Putting \eqref{diag-F-2-hat} and \eqref{diag-psi-hat-psi-tilde} together, we 
get the following commutative diagram 
$$
\begin{tikzpicture}
\matrix (m) [matrix of math nodes, row sep=2.3em, column sep=2.6em]
{\wh{\F}_2 & \wh{\F}_2  & ~~~\\
\F_2  &  \F_2/\N_{\F_2}  & S_d \\};
\path[->, font=\scriptsize]
(m-1-1) edge node[above] {$\hat{T}|_{\wh{\F}_2}$} (m-1-2)  
(m-2-1) edge (m-1-1)  edge node[above] {$T_{\N}^{\F_2}$} (m-2-2)
(m-1-2) edge node[left] {$\hcP_{\N_{\F_2}}$} (m-2-2)
(m-1-2) edge node[above] {$~~~\hat{\psi}$} (m-2-3)
(m-2-2) edge node[above] {$\ti{\psi}~~~$} (m-2-3);
\end{tikzpicture}
$$

Thus equation \eqref{actions-ok} is proved and Theorem \ref{thm:compatible} follows.  
\end{proof}

\bigskip 

Let us prove that 
\begin{prop}  
\label{prop:subordinate}
For every child's drawing $\cD$, there exists $\N \in \NFI_{\PB_4}(\B_4)$ that 
dominates $\cD$, i.e. $\cD \in \Dessin(\N)$. In fact, for every child's drawing $\cD$, 
there are infinitely many elements $\K \in \NFI_{\PB_4}(\B_4)$ such that 
$\cD \in \Dessin(\K)$.
\end{prop}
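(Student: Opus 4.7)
My plan is to reduce to a cofinality claim for the subgroups $\N_{\F_2}$ inside $\F_2$, and then bootstrap to the ``infinitely many'' statement using cofinality of isolated elements. First, represent $\cD$ by a group homomorphism $\psi: \F_2 \to S_d$ and set $H := \ker(\psi)$. Since $\psi(\F_2)$ is finite, $H$ is a normal subgroup of $\F_2$ of index at most $d!$. By Definition \ref{dfn:subordinate}, the condition $\cD \in \Dessin(\N)$ is precisely $\N_{\F_2} \le H$, so the problem reduces to producing $\N \in \NFI_{\PB_4}(\B_4)$ with $\N_{\F_2} \le H$.

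The key step is to show that for every finite-index normal subgroup $H \unlhd \F_2$ there exists some $\N \in \NFI_{\PB_4}(\B_4)$ with $\N_{\F_2} \le H$. Since $\F_2 = \lan x_{12}, x_{23} \ran$ is a subgroup of $\PB_3 = \Aut_{\PaB}((1,2)3)$, the profinite completion $\Fh_2$ embeds into $\wh{\PB}_3 \le \wh{\PaB}^{\le 4}$. Using that $\wh{\PaB}^{\le 4}$ is the profinite completion of the truncated operad $\PaB^{\le 4}$, one shows that $\Fh_2$ is the inverse limit of the finite groups $\F_2/\N_{\F_2}$ as $\N$ ranges over $\NFI_{\PB_4}(\B_4)$, so the system $\{\N_{\F_2}\}_\N$ forms a neighborhood basis of the identity in $\Fh_2$. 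In particular, the clopen subgroup $\overline{H} \le \Fh_2$ contains some $\overline{\N_{\F_2}}$, which gives $\N_{\F_2} \le H$.

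For infinitely many such $\N$, I would apply \cite[Propositions 3.3 and 3.6]{GTshadows}: the subposet $\NFI^{isolated}_{\PB_4}(\B_4)$ is cofinal in $\NFI_{\PB_4}(\B_4)$ and closed under finite intersections. Starting from one $\N$ with $\N_{\F_2} \le H$ and iterating, one builds a strictly decreasing chain $\N \supsetneq \N^{(1)} \supsetneq \N^{(2)} \supsetneq \cdots$ in $\NFI_{\PB_4}(\B_4)$, with residual finiteness of $\B_4$ ensuring that each refinement can be made proper. Each $\N^{(i)}$ satisfies $(\N^{(i)})_{\F_2} \le \N_{\F_2} \le H$ and so dominates $\cD$.

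The main obstacle is the cofinality claim in the second paragraph, as it amounts to the statement that the truncated operad $\PaB^{\le 4}$ has enough finite operadic quotients to realize every finite quotient of $\F_2 \le \PB_3$. While this should be essentially formal given the universal property of $\wh{\PaB}^{\le 4}$, a direct proof would require constructing $\N$ explicitly---for instance, by lifting a suitable characteristic closure of $H$ to a finite-index normal subgroup of $\B_3$ via residual finiteness, and then to one of $\B_4$ compatibly with the operadic insertions that define $\N_{\PB_3}$ from $\N$.
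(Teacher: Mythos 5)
Your reduction is correct --- by Definition \ref{dfn:subordinate}, $\cD \in \Dessin(\N)$ is exactly the condition $\N_{\F_2} \le H$ with $H := \ker(\psi)$ --- and your treatment of the ``infinitely many'' part (refining a given dominating $\K$ by intersecting with subgroups supplied by residual finiteness of $\B_4$) is essentially the paper's argument. But the central step of your proof is not actually proved. The assertion that $\Fh_2$ is the inverse limit of the groups $\F_2/\N_{\F_2}$ as $\N$ ranges over $\NFI_{\PB_4}(\B_4)$, i.e.\ that the subgroups $\N_{\F_2}$ are cofinal among finite index normal subgroups of $\F_2$, is not a formal consequence of the universal property of $\wh{\PaB}^{\le 4}$: that completion is \emph{defined} as a limit over the compatible equivalence relations $\sim_{\N}$, so the claim that the induced topology on $\F_2 \le \PB_3$ is the full profinite topology is essentially a restatement of the proposition you are trying to prove. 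You flag this yourself as ``the main obstacle,'' and the remedy you sketch (lift a characteristic closure of $H$ to $\B_3$, then to $\B_4$ ``compatibly with the operadic insertions'') is precisely where all the work lies and is not carried out. As written, the argument is circular at its key point.

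The paper closes exactly this gap with a short explicit construction, which is worth comparing with your sketch. Given $\psi : \F_2 \to S_d$, extend it to $\vf : \PB_4 \to S_d$ by setting $\vf(x_{12}) := \psi(x)$, $\vf(x_{23}) := \psi(y)$, $\vf(x_{13}) := \psi(x^{-1}y^{-1})$ (equivalently, $\vf(c) := 1$, using $\PB_3 \cong \F_2 \times \lan c \ran$) and $\vf(x_{14}) = \vf(x_{24}) = \vf(x_{34}) := 1$ (using $\PB_4 \cong \PB_3 \ltimes \lan x_{14}, x_{24}, x_{34} \ran$ with the second factor free). Then $\ker(\vf|_{\PB_3}) \cap \F_2 = \ker(\psi)$, and one takes $\N$ to be the normal core of $\ker(\vf)$ in $\B_4$, which lies in $\NFI_{\PB_4}(\B_4)$; the definition of $\N_{\PB_3}$ in \cite{GTshadows} then gives $\N_{\PB_3} \le \ker(\vf|_{\PB_3})$ and hence $\N_{\F_2} \le \ker(\psi)$. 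This is elementary and bypasses any appeal to the structure of $\wh{\PaB}^{\le 4}$. Your strategy could be salvaged only by first proving such a cofinality statement, which would require an argument of exactly this kind anyway.
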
  
\begin{proof}
Let $\psi$ be a group homomorphism from $\F_2$ to $S_d$ that represents a child's drawing $\cD$. 

The following formulas 
\begin{equation}
\label{PB4-to-Sd}
\begin{array}{c}
\vf(x_{12}) := \psi(x), \qquad 
\vf(x_{23}) := \psi(y), \qquad 
\vf(x_{13}) := \psi(x^{-1}y^{-1}), \\[0.3cm]
\vf(x_{14}) = \vf(x_{24}) = \vf(x_{34}) := 1_{S_d}
\end{array}
\end{equation}
define a group homomorphism $\vf : \PB_4 \to S_d$. 

Indeed, since $x_{13} = x_{12}^{-1} x_{23}^{-1} c$, the first three equations in \eqref{PB4-to-Sd} define 
a group homomorphism from $\PB_3$ to $S_d$. Since the elements $x_{14}, x_{24}, x_{34}$ generate a 
free subgroup of $\PB_4$ and $\PB_4$ is isomorphic to the semi-direct product 
$\PB_3 \ltimes \lan x_{14}, x_{24}, x_{34} \ran$, the formulas in \eqref{PB4-to-Sd} define 
a group homomorphism $\vf : \PB_4 \to S_d$. 

It is clear that 
\begin{equation}
\label{ker-vf-F2}
\ker(\vf \big|_{\PB_3}) \cap \F_2 = \ker(\psi). 
\end{equation}
Unfortunately, in general, $\ker(\vf)$ is not normal in $\B_4$. 

We denote by $\N$ the normal core of $\ker(\vf)$ in $\B_4$. Since $\ker(\vf)$ has finite index in $\B_4$, 
so does $\N$. In addition, $\N \le \PB_4$. Thus $\N \in \NFI_{\PB_4}(\B_4)$.

Using the definition of $\N_{\PB_3}$ (see equation (2.4) in \cite[Section 2.2]{GTshadows})
and the inclusion $\N \le \ker(\vf)$, we conclude that 
$$
\N_{\PB_3} \le \ker\big( \vf |_{\PB_3} \big).
$$
Combining this observation with identity \eqref{ker-vf-F2}, we conclude that 
$$
\N_{\F_2} \le \ker(\psi).
$$
Thus $\N$ dominates the child's drawing $[\psi]$. 

To prove the second statement, let us show that, for every $\K \in  \NFI_{\PB_4}(\B_4)$
that dominates $[\psi]$, there exists $\ti{\K} \in  \NFI_{\PB_4}(\B_4)$ such that 
\begin{itemize}

\item $\ti{\K}$ dominates $[\psi]$ and 

\item $\ti{\K}$ is properly contained in $\K$. 

\end{itemize}

Since $\K$ is non-trivial, there exists a non-identity element $w \in \K$. 
Since $\B_4$ is residually finite, there exists $\H \in \NFI(\B_4)$ such that 
$w \notin \H$. We set 
$$
\ti{\K} := \H \cap \K. 
$$  
It is clear that $\ti{\K} \in \NFI_{\PB_4}(\B_4)$ and $\ti{\K}$ is properly contained in $\K$. 

Since $\ti{\K} \le \K$, $\ti{\K}$ also dominates $[\psi]$.
\end{proof}
\begin{remark}  
\label{rem:David-Leila}
Although paper \cite{HS} uses a more restrictive assumption on the analogue of 
$\N \in \NFI_{\PB_4}(\B_4)$, a discussion that is very similar to the proof of 
Proposition \ref{prop:subordinate} can be found on page 225 of \cite{HS}. 
\end{remark}
\begin{remark}  
\label{rem:analogy-fields}
Using Belyi pairs, we can construct a natural analogue\footnote{I came up with the 
category $\Dessin_{\bbQ}$ thanks to a suggestion of a diligent referee.} 
$\Dessin_{\bbQ}$ of the category 
$\Dessin$. Objects of $\Dessin_{\bbQ}$ are number fields 
$K \supset \bbQ$; for a number field $K$, we denote by 
$\Dessin_{\bbQ}(K)$ the set of child's drawings that admit a Belyi pair defined over $K$; finally, 
for number fields $E, K$, the set of morphisms from $E$ to $K$ is the set of all maps 
from $\Dessin_{\bbQ}(E)$ to $\Dessin_{\bbQ}(K)$. The natural analogue of the groupoid 
$\GTh_{\NFI}$ is the transformation groupoid $G^{num fields}_{\bbQ}$: objects of $G^{num fields}_{\bbQ}$ 
are number fields and morphisms from $K$ to $E$ are elements of $g\in G_{\bbQ}$ such that $g(K)= E$. Clearly, 
the action of $G_{\bbQ}$ on child's drawings (defined via Belyi pairs) is nothing but 
a cofunctor from $G^{num fields}_{\bbQ}$ to the category $\Dessin_{\bbQ}$. 

In the same vein, the relation of subordinance from Definition \ref{dfn:subordinate} 
is analogous to a natural relationship between child's drawings and finite Galois extensions of $\bbQ$. 
One can say that a child's drawing $\cD$
is \e{subordinate} to a finite Galois extension $E \supset \bbQ$ if $\cD$ can be represented 
by a Belyi pair $(X,\ga)$ defined over an intermediate field of the extension 
$E \supset \bbQ$. The direct analogue of the first statement of
Proposition \ref{prop:subordinate} for this relation 
is obvious because every finite field extension $K \supset \bbQ$ is contained in a 
finite Galois extension of $\bbQ$. The direct analogue of the second statement of
Proposition \ref{prop:subordinate} is also obvious since the field 
extension $\ol{\bbQ} \supset \bbQ$ is infinite. 

There may be further interesting parallels between the pairs of categories
$$
(\GTh_{\NFI},~ \Dessin) \qquad \txt{and} \qquad
(G^{num fields}_{\bbQ},~ \Dessin_{\bbQ})
$$
and it is tempting to explore these parallels. 
\end{remark}

\bigskip

It is relatively easy to see that, if a child's drawing $[\psi]$ is Galois, then so is 
$[\psi]^g$ for every $g \in G_{\bbQ}$. The corresponding version of this statement 
for the action of $\GT$-shadows requires a proof: 
\begin{prop}  
\label{prop:Galois-to-Galois}
Let $\N \in \NFI_{\PB_4}(\B_4)$, $[\psi] \in \Dessin(\N)$ and 
$[m,f] \in \GT(\N)$.  If the child's drawing $[\psi]$ is Galois 
then so is $[\psi]^{[m,f]}$. 
\end{prop}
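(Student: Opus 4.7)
The plan is to characterize the Galois property in terms of the monodromy group and then observe that the monodromy group is unchanged by the action of $\GT$-shadows. Specifically, a child's drawing $[\psi]$ of degree $d$ is Galois if and only if the monodromy group $\psi(\F_2) \le S_d$ acts regularly on $\{1,2,\dots,d\}$, which is equivalent to the numerical condition $|\psi(\F_2)| = d$. (This follows from Remark \ref{rem:coverings-Belyi-pairs}: Galoisness of the covering is equivalent to normality of the subgroup $H_{\psi} = \{h \in \F_2 \mid \psi(h)(1)=1\}$, and since $|\F_2 : H_{\psi}| = d$ by orbit-stabilizer and $\ker(\psi)$ is the intersection of the conjugates of $H_{\psi}$, normality of $H_{\psi}$ is equivalent to $H_{\psi} = \ker(\psi)$, equivalently $|\psi(\F_2)| = d$.)

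With this reformulation in hand, the rest is short. First, using the decomposition established in the proof of Theorem \ref{thm:Action}, write
\begin{equation*}
\psi^{(m,f)} ~=~ \ti{\psi} \circ T^{\F_2}_{m,f},
\end{equation*}
where $\ti{\psi} : \F_2/\N_{\F_2} \to S_d$ is defined by $\ti{\psi}(w \N_{\F_2}) := \psi(w)$. Since $\N_{\F_2} \le \ker(\psi)$, the image of $\ti{\psi}$ coincides with $\psi(\F_2)$.

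Next, invoke Proposition \ref{prop:T-F-2-onto} to conclude that $T^{\F_2}_{m,f} : \F_2 \to \F_2/\N_{\F_2}$ is surjective. Consequently,
\begin{equation*}
\psi^{(m,f)}(\F_2) ~=~ \ti{\psi}\bigl( T^{\F_2}_{m,f}(\F_2) \bigr) ~=~ \ti{\psi}\bigl(\F_2/\N_{\F_2}\bigr) ~=~ \psi(\F_2).
\end{equation*}
In particular $|\psi^{(m,f)}(\F_2)| = |\psi(\F_2)|$. Now, if $[\psi]$ is Galois, the above characterization gives $|\psi(\F_2)| = d$, hence $|\psi^{(m,f)}(\F_2)| = d$, and therefore $[\psi^{(m,f)}]$ is Galois.

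The only genuinely non-trivial ingredient is the surjectivity of $T^{\F_2}_{m,f}$, which is exactly Proposition \ref{prop:T-F-2-onto}; once that is available, the argument reduces to the observation that $\GT$-shadows preserve the monodromy group (a point which also underlies Theorem \ref{thm:passport-invar}). The Galois characterization in terms of the order of the monodromy group is standard, but it is the conceptual step that must be placed first.
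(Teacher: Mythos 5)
Your proof is correct and follows essentially the same route as the paper: both arguments reduce Galoisness to the condition $|\psi(\F_2)| = d$, use the factorization $\psi^{(m,f)} = \ti{\psi} \circ T^{\F_2}_{m,f}$, and invoke the surjectivity of $T^{\F_2}_{m,f}$ (Proposition \ref{prop:T-F-2-onto}) to conclude that the monodromy group, and hence its order, is preserved. The only difference is that you spell out the justification of the Galois criterion in more detail than the paper's one-line parenthetical, which is fine.
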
  
\begin{proof}
Let $\psi: \F_2 \to S_d$ be a homomorphism that represents the child's drawing $[\psi]$. 
Note that $[\psi]$ is Galois if and only if the order of the subgroup $\psi(\F_2) \le S_d$ is $d$. 
(This is equivalent to the statement that the stabilizer of $1$ in $\F_2$ 
coincides with its normal core.)

Let us denote by $\ti{\psi}$ the homomorphism from $\F_2/\N_{\F_2} \to S_d$ defined 
by the formula
$$
\ti{\psi}(w \N_{\F_2}) = \psi(w). 
$$

The child's drawing $[\psi]^{[m,f]}$ is represented by the homomorphism 
$$
\ti{\psi} \circ T^{\F_2}_{m,f} : \F_2 \to S_d. 
$$

Since $T^{\F_2}_{m,f}: \F_2 \to \F_2/\N_{\F_2}$ is onto 
(see Proposition \ref{prop:T-F-2-onto}) and $\ti{\psi}\big(\F_2/\N_{\F_2}\big) = \psi(\F_2)$,
\begin{equation}
\label{the-same-mon-group}
\ti{\psi} \circ T^{\F_2}_{m,f} (\F_2) = \psi(\F_2). 
\end{equation}

Thus the order of the subgroup $\ti{\psi} \circ T^{\F_2}_{m,f} (\F_2)\le S_d$ also coincides 
with the degree $d$. Hence the child's drawing  $[\psi]^{[m,f]}$ is Galois. 
\end{proof}

\bigskip

The following proposition shows that there is a large supply of Galois child's drawings 
whose $\GTSh^{\hs}$-orbits are singletons: 
\begin{prop}  
\label{prop:N-isolated-NF2-great}
For every $\N \in \NFI_{\PB_4}(\B_4)$, the Galois child's drawing $\cD_{\N}$ represented 
by $\N_{\F_2}$ is subordinate to $\N$. Moreover, if $\N$ is isolated, then 
the orbit $\GT^{\hs}(\N)(\cD_{\N})$ is a singleton. 
\end{prop}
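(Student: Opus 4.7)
The plan is to handle the two assertions separately and in order of increasing difficulty.

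For the first assertion I would represent $\cD_{\N}$ by the homomorphism $\psi : \F_2 \to S_d$ (with $d := [\F_2 : \N_{\F_2}]$) arising from the left action of $\F_2$ on the coset space $\F_2/\N_{\F_2}$, after fixing a bijection $\F_2/\N_{\F_2} \iso \{1,\dots,d\}$. This is exactly the representing homomorphism provided by Remark \ref{rem:conj-finite-index}. Because $\N_{\F_2}$ is normal in $\F_2$, the pointwise stabilizer of the basepoint coset coincides with its normal core, namely $\N_{\F_2}$ itself, so $\ker(\psi) = \N_{\F_2}$. In particular $\N_{\F_2} \le \ker(\psi)$, which is condition \eqref{subord-cond}, and $\cD_{\N}$ is subordinate to $\N$ (and is Galois, consistently with Remark \ref{rem:coverings-Belyi-pairs}).

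For the second assertion, fix $[m,f] \in \GT^{\hs}(\N)$. Since $\N$ is isolated, the only nonempty hom-set with target $\N$ in $\GTSh^{\hs}$ is $\GTSh^{\hs}(\N,\N)$, and hence $\ker(T^{\PB_4}_{m,f}) = \N$. By \eqref{T-m-f-isom} the operad morphism $T_{m,f}$ factors as an isomorphism $T^{\isom}_{m,f} : \PaB^{\le 4}/\sim_{\N} \iso \PaB^{\le 4}/\sim_{\N}$ composed with $\cP_{\N}$. Reading this off at arity $3$ and then restricting to the subgroup $\F_2 \le \PB_3$, together with formulas \eqref{T-F-2-def}, gives that $T^{\F_2}_{m,f}$ factors as $T^{\F_2,\isom}_{m,f} \circ \cP_{\N_{\F_2}}$ with $T^{\F_2,\isom}_{m,f}$ an automorphism of $\F_2/\N_{\F_2}$; in particular $\ker(T^{\F_2}_{m,f}) = \N_{\F_2}$.

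Finally I would invoke the formula $\psi^{(m,f)} = \ti{\psi} \circ T^{\F_2}_{m,f}$ from the proof of Theorem \ref{thm:Action}, where $\ti{\psi} : \F_2/\N_{\F_2} \to S_d$ is the left regular embedding of the finite group $\F_2/\N_{\F_2}$ and is therefore injective. Combined with the previous step this yields $\ker(\psi^{(m,f)}) = \N_{\F_2} = \ker(\psi)$. Since both $[\psi]$ and $[\psi^{(m,f)}]$ are Galois drawings (see Proposition \ref{prop:Galois-to-Galois}), the correspondence of Remark \ref{rem:conj-finite-index} specialized to normal subgroups forces $[\psi^{(m,f)}] = [\psi] = \cD_{\N}$, and the orbit $\GT^{\hs}(\N)(\cD_{\N})$ is a singleton. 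The only mildly nontrivial step is the implication $\ker(T^{\PB_4}_{m,f}) = \N \Rightarrow \ker(T^{\F_2}_{m,f}) = \N_{\F_2}$; this is where I expect the main (though routine) bookkeeping to sit, as it requires unwinding the construction of $\N_{\PB_3}$, $\N_{\F_2}$ and the compatibility of $\sim_{\N}$ with the passage from $\PB_4$ down to $\PB_3$ and then to $\F_2$.
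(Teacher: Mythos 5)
Your proposal is correct and follows essentially the same route as the paper's own proof: represent $\cD_{\N}$ by the coset action so that normality of $\N_{\F_2}$ gives $\ker(\psi)=\N_{\F_2}$, use isolatedness to force $\ker(T^{\PB_4}_{m,f})=\N$ and hence $\ker(T^{\F_2}_{m,f})=\N_{\F_2}$, and conclude via the factorization $\psi^{(m,f)}=\ti{\psi}\circ T^{\F_2}_{m,f}$ with $\ti{\psi}$ injective, plus Proposition \ref{prop:Galois-to-Galois}, that the Galois drawing $[\psi^{(m,f)}]$ is again represented by $\N_{\F_2}$. The step you flag as the main bookkeeping is handled in the paper by noting that the kernel of $T_{m,f}$ is the compatible equivalence relation $\sim_{\N}$, whence $\ker(T^{\PB_3}_{m,f})=\N_{\PB_3}$ and $\ker(T^{\F_2}_{m,f})=\N_{\PB_3}\cap\F_2=\N_{\F_2}$, exactly as you anticipate.
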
  
\begin{proof}
We set $d:=|\F_2 : \N_{\F_2}|$ and choose a bijection between the set $\F_2 /\N_{\F_2}$ of 
left cosets and $\{1,2,\dots, d\}$. Then the standard (left) action of $\F_2$ on  $\F_2 /\N_{\F_2}$
gives us a group homomorphism
$$
\psi: \F_2 \to S_d
$$
which represents the child's drawing $\cD_{\N}$. 

Since $\N_{\F_2}$ is normal in $\F_2$, $\ker(\psi)= \N_{\F_2}$. 
Thus $\cD_{\N}$ is subordinate to $\N$. 

For the second statement, we assume that $\N$ is isolated. 

Let $\ti{\psi}$ be the homomorphism $\F_2/\N_{\F_2} \to S_d$ defined by 
the formula 
$$
\ti{\psi}(w \N_{\F_2}) : = \psi(w)
$$
and let $[m,f] \in \GT^{\hs}(\N)$.

The (new) child's drawing $\cD_{\N}^{[m,f]}$ is represented by 
the homomorphism
$$
\psi^{(m,f)} := \ti{\psi} \circ T^{\F_2}_{m,f} : \F_2 \to S_d.
$$
Since $\ker(\psi)= \N_{\F_2}$, the homomorphism $\ti{\psi}$ is injective. 

Since $\N$ is isolated, the kernel of the morphism 
$T_{m,f} : \PaB^{\le 4} \to  \PaB^{\le 4}/\sim_{\N}$ is the compatible equivalence 
relation corresponding to $\N$. 
Hence $\ker(T^{\PB_3}_{m,f}) = \N_{\PB_3}$ and therefore 
$\ker(T^{\F_2}_{m,f}) = \N_{\F_2}$. 

Combining this observation with the injectivity of $\ti{\psi}$, we conclude that 
$$
\ker(\psi^{(m,f)}) = \N_{F_2}.
$$

Due to Proposition \ref{prop:Galois-to-Galois}, the child's drawing $[\psi^{(m,f)}]$ is Galois. 

To show that $[\psi^{(m,f)}]$ is represented by the subgroup $\ker(\psi^{(m,f)}) = \N_{F_2}$, 
we consider the action of $\F_2$ on $\{1,2, \dots, d\}$ corresponding to the 
homomorphism $\psi^{(m,f)}: \F_2 \to S_d$. 
Since $[\psi^{(m,f)}]$ is Galois the stabilizer $\Stab_{\F_2}(j)$ of $j$ coincides 
with the kernel of $\psi^{(m,f)}$ for every $1 \le j \le d$.
 
Thus $[\psi^{(m,f)}]$ is represented by the same subgroup $\N_{\F_2}$ and the proposition is proved. 
\end{proof}

\subsection{The hierarchy of orbits}
\label{sec:hierarchy}

Combining Theorem \ref{thm:compatible} with the definition of the cofunctor $\A^{\bbQ}$ (see \eqref{A-bbQ}), 
we get the following statement:
\begin{cor}  
\label{cor:Ihara-OK}
The diagram of (co)functors 
\begin{equation}
\label{A-bbQ-A-A-sh-diag}
\begin{tikzpicture}
\matrix (m) [matrix of math nodes, row sep=2.3em, column sep=2.3em]
{G_{\bbQ, \NFI}  & \GTh_{\NFI} &  \GTSh^{\hs} \\
 ~ & \Dessin & ~  \\};
\path[->, font=\scriptsize]
(m-1-2) edge node[above] {$\PR$} (m-1-3)
 edge node[left] {$\A$} (m-2-2)
(m-1-1) edge node[above] {$\Ih$} (m-1-2)
edge node[left] {$\A^{\bbQ}~~$} (m-2-2)
(m-1-3) edge node[right] {$~~\A^{sh}$} (m-2-2);  
\end{tikzpicture}
\end{equation}
commutes ``on the nose''. \qed
\end{cor}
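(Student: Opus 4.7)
The plan is to observe that the corollary is essentially a formal consequence of Theorem \ref{thm:compatible} and the definition of $\A^{\bbQ}$, so the proof should be very short.

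First, I would note that the right-hand triangle of diagram \eqref{A-bbQ-A-A-sh-diag}, namely
$$
\A = \A^{sh} \circ \PR : \GTh_{\NFI} \to \Dessin,
$$
is precisely the content of Theorem \ref{thm:compatible}. Therefore, the only thing left to verify is the commutativity of the left-hand triangle involving $\Ih$, $\A$, and $\A^{\bbQ}$. But by definition (see equation \eqref{A-bbQ}), we have $\A^{\bbQ} := \A \circ \Ih$, which is exactly this commutativity.

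Composing, I would write
$$
\A^{\bbQ} \;=\; \A \circ \Ih \;=\; (\A^{sh} \circ \PR) \circ \Ih \;=\; \A^{sh} \circ (\PR \circ \Ih),
$$
where the first equality is the definition of $\A^{\bbQ}$, the second is Theorem \ref{thm:compatible}, and the third is associativity of functor composition. This establishes that both triangles in \eqref{A-bbQ-A-A-sh-diag} commute on the nose, hence so does the outer region.

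There is no real obstacle here, since the essential work has been done in Theorem \ref{thm:compatible}; the corollary only packages that result together with the defining equation for $\A^{\bbQ}$ and the Ihara functor $\Ih$ from \eqref{Ih-functor}. The only thing to emphasize in the write-up is that all three (co)functors operate as the identity on objects (elements of $\NFI_{\PB_4}(\B_4)$), so the commutativity on objects is trivial, and the commutativity on morphisms follows from Theorem \ref{thm:compatible} applied to morphisms of $\GTh_{\NFI}$ in the image of $\Ih$.
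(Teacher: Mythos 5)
Your proposal is correct and matches the paper's own argument exactly: the paper derives this corollary in one line by combining Theorem \ref{thm:compatible} (which gives $\A = \A^{sh} \circ \PR$) with the defining equation \eqref{A-bbQ} for $\A^{\bbQ}$, which is precisely your decomposition into the two triangles. Nothing is missing.
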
   
  
\bigskip

Let $\K, \N \in \NFI_{\PB_4}(\B_4)$ and $\K \le \N$. 
Recall \cite[Section 3.2]{GTshadows} that, if a pair $(m,f) \in \bbZ \times \F_2$ represents 
a charming $\GT$-shadow with the target $\K$, then the same pair $(m,f)$ also 
represents a charming $\GT$-shadow with the target $\N$. Hence, for every 
pair $\K, \N \in \NFI_{\PB_4}(\B_4)$ with $\K \le \N$, we have a natural map 
\begin{equation}
\label{GT-K-to-GT-N}
\P_{\K, \N} : \GT^{\hs}(\K) \to \GT^{\hs}(\N).
\end{equation}

Let $\psi$ be a homomorphism $\F_2 \to S_d$ that represents a child's drawing 
subordinate to $\N$. Since $\K \le \N$, $[\psi]$ is also 
subordinate to $\K$. 

Let $[m,f]\in \GT^{\hs}(\K)$. Since $[\psi]^{[m,f]}$ depends only on the residue class of $m$ modulo 
$\lcm(\ord(x \ker(\psi)), \ord(y \ker(\psi)))$ and the coset $f \ker(\psi) \in \F_2/ \ker(\psi)$, we have 
\begin{equation}
\label{N-K-OK}
[\psi]^{[m,f]} = [\psi]^{\P_{\K, \N}([m,f])}.
\end{equation}

Combining this observation with Corollary \eqref{cor:Ihara-OK} we get the following statement:
\begin{cor}  
\label{cor:hierarchy}
Let $\K, \N$ be elements of $\NFI_{\PB_4}(\B_4)$ that dominate 
a child's drawing $\cD$ and $\K \le \N$.  Then we have the following hierarchy of orbits: 
\begin{equation}
\label{hierarchy}
\GT^{\hs}(\N)(\cD) \supset \GT^{\hs}(\K)(\cD) \supset \GTh(\cD) \supset G_{\bbQ}(\cD).  
\end{equation}
\qed
\end{cor}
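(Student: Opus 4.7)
The plan is to establish the three inclusions of \eqref{hierarchy} from right to left, chaining together the commutative diagram of Corollary \ref{cor:Ihara-OK} with the comparison map \eqref{GT-K-to-GT-N} and the observation \eqref{N-K-OK}.

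First I would handle the rightmost inclusion $\GTh(\cD) \supset G_{\bbQ}(\cD)$. This is immediate from the Ihara embedding \eqref{Ihara-GQ-GTh}: any element of $G_{\bbQ}$ acts on $\cD$ as the corresponding element of $\GTh$, which is exactly the content of the commutativity of the left triangle in diagram \eqref{A-bbQ-A-A-sh-diag} combined with $\A^{\bbQ}=\A\circ\Ih$. Hence every $G_{\bbQ}$-translate of $\cD$ is a $\GTh$-translate.

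Next I would establish the middle inclusion $\GT^{\hs}(\K)(\cD) \supset \GTh(\cD)$. Given $\hat{T}\in\GTh$, Theorem \ref{thm:compatible} (equivalently the right triangle of \eqref{A-bbQ-A-A-sh-diag}) says $\A(\hat T)(\cD)=\A^{sh}(\PR(\hat T))(\cD)$, and by Remark \ref{rem:to-GTSh-hs} the $\GT$-shadow $\PR(\hat T)=T_{\K}\in \GT^{\hs}(\K)$. Thus every $\GTh$-translate of $\cD$ arises as a $\GT^{\hs}(\K)$-translate, giving the inclusion.

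Finally I would deal with the leftmost inclusion $\GT^{\hs}(\N)(\cD)\supset\GT^{\hs}(\K)(\cD)$, which is exactly where the hypothesis $\K\le\N$ is used. By the discussion preceding \eqref{GT-K-to-GT-N}, any representative pair $(m,f)$ of an element of $\GT^{\hs}(\K)$ also represents an element of $\GT^{\hs}(\N)$, so we have the map $\P_{\K,\N}$. The identity \eqref{N-K-OK} then says $\cD^{[m,f]}=\cD^{\P_{\K,\N}([m,f])}$, so the $\GT^{\hs}(\K)$-orbit of $\cD$ is contained in the $\GT^{\hs}(\N)$-orbit of $\cD$. The only mildly delicate point is checking that \eqref{N-K-OK} applies, i.e.\ that $[\psi]^{[m,f]}$ depends only on the class of $m$ modulo $\mathrm{lcm}(\ord(x\ker\psi),\ord(y\ker\psi))$ and on $f\ker(\psi)$; but this is manifest from formula \eqref{def-the-action} for $\psi^{(m,f)}$, so no real obstacle arises. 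Chaining the three inclusions gives \eqref{hierarchy}.
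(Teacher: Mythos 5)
Your proposal is correct and follows the paper's own argument: the paper likewise deduces the corollary by combining the commutative diagram of Corollary \ref{cor:Ihara-OK} (which yields the two rightmost inclusions via the Ihara embedding and the factorization $\A=\A^{sh}\circ\PR$ into $\GTSh^{\hs}$) with the map $\P_{\K,\N}$ and identity \eqref{N-K-OK} for the leftmost inclusion. You have simply written out explicitly the three inclusions that the paper leaves implicit.
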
   
\begin{remark}  
\label{rem:is-it-proper}
There may be examples of pairs $\K,\N \in \NFI_{\PB_4}(\B_4)$ with $\K \le \N$
such that the natural map $\P_{\K, \N}$ \eqref{GT-K-to-GT-N} is not onto. 
So, in principle, there may be examples of  $\K, \N \in \NFI_{\PB_4}(\B_4)$ with $\K \le \N$
and $\cD \in \Dessin(\N)$ for which the inclusion 
$$
\GT^{\hs}(\N)(\cD) \supset \GT^{\hs}(\K)(\cD) 
$$
is proper. 
At the time of writing, the author did not find any examples of this kind. In fact, for all examples in which 
both orbits $\GT^{\hs}(\N)(\cD)$ and $G_{\bbQ}(\cD)$ can be computed, we have
$\GT^{\hs}(\N)(\cD) = G_{\bbQ}(\cD)$.
\end{remark}
\begin{remark}  
\label{rem:some-day-some-way}
Since the orbit $\GTh(\cD)$ is finite and $\GTh$ is the limit 
of the functor that sends $\N \in \NFI^{isolated}_{\PB_4}(\B_4)$ to the finite group 
$\GT^{\hs}(\N)$ (see \cite[Theorem 3.8]{GTshadows}), for every child's drawing $\cD$, 
there exists $\N \in \NFI^{isolated}_{\PB_4}(\B_4)$ such that 
\begin{itemize}

\item $\cD \in \Dessin(\N)$, i.e. $\N$ dominates $\cD$, and 

\item $\GTh(\cD) = \GT^{\hs}(\N)(\cD)$. 

\end{itemize}
See also, the Corollary and the Problem on page 227 of \cite{HS}.  
\end{remark}

\bigskip

Let $\cD$ be a child's drawing and $H_{\cD}$ be the stabilizer of $\cD$ in $G_{\bbQ}$.  
Recall that the \e{field of moduli} of a child's drawing $\cD$ is the (finite) extension 
$M_{\cD} \supset \bbQ$ corresponding to the (closed) subgroup $H_{\cD} \le G_{\bbQ}$ 
(via the Galois correspondence). 
Since $[M_{\cD} : \bbQ ] =  |G_{\bbQ} : H_{\cD}|$, the 
orbit-stabilizer theorem and Corollary \ref{cor:hierarchy} imply
the following version of \cite[Proposition 14]{HS}:
\begin{cor}  
\label{cor:field-of-moduli}
For every child's drawing $\cD$ and every $\N \in  \NFI_{\PB_4}(\B_4)$ that dominates 
$\cD$, we have 
\begin{equation}
\label{bound-degree}
[M_{\cD} : \bbQ ] \le |\GT^{\hs}(\N)(\cD)|.
\end{equation}
\end{cor}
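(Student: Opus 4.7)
The plan is to chain together three simple ingredients, each of which is either recalled or already at our disposal from the preceding corollaries. First, by the Galois correspondence applied to the closed subgroup $H_{\cD} \le G_{\bbQ}$, the field of moduli satisfies $[M_{\cD} : \bbQ] = |G_{\bbQ} : H_{\cD}|$; this is part of the very definition of $M_{\cD}$, as noted in the paragraph preceding the corollary. Second, the orbit-stabilizer theorem identifies this index with the cardinality of the $G_{\bbQ}$-orbit of $\cD$, so
\begin{equation}
\label{orb-stab-step}
[M_{\cD} : \bbQ] \,=\, |G_{\bbQ} : H_{\cD}| \,=\, |G_{\bbQ}(\cD)|.
\end{equation}

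Third, since $\N$ dominates $\cD$, Corollary \ref{cor:hierarchy} (with $\K = \N$, using the trivial inclusion $\N \le \N$, so the middle term collapses) yields the inclusion
$$
\GT^{\hs}(\N)(\cD) \,\supset\, G_{\bbQ}(\cD),
$$
and hence $|G_{\bbQ}(\cD)| \le |\GT^{\hs}(\N)(\cD)|$. Combining this with \eqref{orb-stab-step} gives exactly the desired bound $[M_{\cD} : \bbQ] \le |\GT^{\hs}(\N)(\cD)|$.

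There is really no hard step here: all the work has already been done in establishing Corollary \ref{cor:hierarchy}, which in turn rested on Theorem \ref{thm:compatible} and on the compatibility of the Ihara embedding with the $\GTh$-action on child's drawings. The only mild subtlety worth flagging in the write-up is that $H_{\cD}$ is a closed subgroup of the profinite group $G_{\bbQ}$ (since the $G_{\bbQ}$-action on child's drawings factors through a finite quotient when restricted to the orbit of $\cD$), which is what legitimizes the use of the Galois correspondence in the first step; this is automatic and can be dispensed with in a single sentence.
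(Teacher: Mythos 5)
Your proof is correct and is exactly the argument the paper gives: the identity $[M_{\cD}:\bbQ] = |G_{\bbQ}:H_{\cD}|$ from the definition of the field of moduli, the orbit-stabilizer theorem to convert this index into $|G_{\bbQ}(\cD)|$, and the inclusion $G_{\bbQ}(\cD) \subset \GT^{\hs}(\N)(\cD)$ from Corollary \ref{cor:hierarchy}. The paper compresses all of this into the single sentence preceding the corollary, so your write-up is just a slightly more explicit version of the same proof.
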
  

Let us also recall \cite[Proposition 2.5]{C-Harbater-Hurwitz}, \cite[Corollary on page 2]{JV-errata} 
that, if the child's drawing $\cD$ is Galois then it admits
a Belyi pair $(X, \ga)$ defined over its field of moduli $M_{\cD}$.  Combining this observation 
with Corollary \eqref{cor:hierarchy}, we conclude that if $\cD$ is a Galois child's drawing 
and $\GT^{\hs}(\N)(\cD)$ is a singleton (for some $\N$ that dominates $\cD$), then
$\cD$ admits a Belyi pair $(X, \ga)$ defined over $\bbQ$. 

A large supply of examples of such Galois child's drawings comes from isolated 
elements of $\NFI_{\PB_4}(\B_4)$. Indeed, combining \cite[Proposition 2.5]{C-Harbater-Hurwitz}, 
\cite[Corollary on page 2]{JV-errata} with Proposition \ref{prop:N-isolated-NF2-great} and  
Corollary \eqref{cor:hierarchy}, we get the following statement: 
\begin{cor}  
\label{cor:Galois-dessin-overQ}
For every isolated element $\N \in \NFI_{\PB_4}(\B_4)$, the child's drawing represented by 
the subgroup $\N_{\F_2} \unlhd \F_2$ admits a Belyi pair defined over $\bbQ$. \qed
\end{cor}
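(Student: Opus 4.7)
The plan is to assemble the statement directly from the three ingredients already recalled just above the corollary: Proposition \ref{prop:N-isolated-NF2-great}, Corollary \ref{cor:hierarchy}, and the Coombes--Harbater theorem \cite[Proposition 2.5]{C-Harbater-Hurwitz}, \cite[Corollary on page 2]{JV-errata} on descent of Galois child's drawings to their field of moduli.

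First, I would fix an isolated $\N \in \NFI_{\PB_4}(\B_4)$ and let $\cD_{\N}$ denote the child's drawing represented by $\N_{\F_2} \unlhd \F_2$. Proposition \ref{prop:N-isolated-NF2-great} supplies two facts in one stroke: $\cD_{\N}$ is Galois and subordinate to $\N$, and, because $\N$ is isolated, the orbit $\GT^{\hs}(\N)(\cD_{\N})$ is a singleton.

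Next, I would invoke the hierarchy \eqref{hierarchy} from Corollary \ref{cor:hierarchy}, applied with $\K = \N$, to conclude the chain of inclusions
\begin{equation*}
\{\cD_{\N}\} \;=\; \GT^{\hs}(\N)(\cD_{\N}) \;\supset\; \GTh(\cD_{\N}) \;\supset\; G_{\bbQ}(\cD_{\N}).
\end{equation*}
In particular, $G_{\bbQ}(\cD_{\N}) = \{\cD_{\N}\}$, which means the stabilizer $H_{\cD_{\N}} \le G_{\bbQ}$ is the whole of $G_{\bbQ}$. Via the Galois correspondence this forces the field of moduli $M_{\cD_{\N}}$ to be $\bbQ$.

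Finally, since $\cD_{\N}$ is Galois and has field of moduli $\bbQ$, the Coombes--Harbater descent result \cite[Proposition 2.5]{C-Harbater-Hurwitz}, \cite[Corollary on page 2]{JV-errata} produces a Belyi pair $(X,\ga)$ representing $\cD_{\N}$ that is defined over $\bbQ$, completing the proof. There is no real obstacle here: all the substantive work lies in Proposition \ref{prop:N-isolated-NF2-great} (that isolated targets yield a trivial $\GTSh$-orbit) and in the cited descent theorem; the corollary is just the observation that the compatibility of the $\GTSh$- and $G_{\bbQ}$-actions, encoded in Corollary \ref{cor:Ihara-OK} and the hierarchy \eqref{hierarchy}, lets one transfer the triviality of the $\GTSh^{\hs}$-orbit to triviality of the $G_{\bbQ}$-orbit.
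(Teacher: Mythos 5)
Your proposal is correct and follows exactly the route the paper takes: it combines Proposition \ref{prop:N-isolated-NF2-great} (the drawing $\cD_{\N}$ is Galois, subordinate to $\N$, and has singleton $\GT^{\hs}(\N)$-orbit when $\N$ is isolated) with the hierarchy of Corollary \ref{cor:hierarchy} to force $G_{\bbQ}(\cD_{\N})$ to be a singleton, and then invokes the Coombes--Harbater descent result to realize $\cD_{\N}$ by a Belyi pair over its field of moduli $\bbQ$. Nothing is missing; this is the paper's own argument made explicit.
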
  
\begin{remark}  
\label{rem:Galois-dessin-overQ}
Section 4 of \cite{GTshadows} presents the basic information about 35 selected elements 
\begin{equation}
\label{35-cool-guys}
\N^{(0)}, \N^{(1)}, \dots, \N^{(34)} 
\end{equation}
of the poset $\NFI_{\PB_4}(\B_4)$. More information about these elements can be found in \cite{Package-GT}. 
According to Table 1 (on page 40) in \cite{GTshadows}, $27$ of these $35$ elements are isolated. 
Due to Corollary \ref{cor:Galois-dessin-overQ}, for every isolated element $\N$ in list \eqref{35-cool-guys}, 
the child's drawing represented by $\N_{\F_2}$ admits a Belyi pair defined over $\bbQ$.
Moreover, for many of these isolated elements, the degrees of the corresponding child's drawings 
are quite large. For example, the degree of the child's drawing represented by $\N^{(34)}_{\F_2}$ is 
$20575296 = 2^6 \cdot 3^8 \cdot 7^2$.
\end{remark}
\begin{remark}  
\label{rem:better-bound}
Proposition \ref{prop:N-isolated-NF2-great}, Remark \ref{rem:Galois-dessin-overQ}
and examples considered in Section \ref{sec:examples} indicate that the size of the orbit 
$\GT^{\hs}(\N)(\cD)$ (for some $\N$ dominating a child's drawing $\cD$)
may be significantly smaller than the number of the child's drawings with
the same passport as $\cD$. Thus the bound on the degree of the field of moduli 
\eqref{bound-degree} has more practical value than the one given 
in \cite[Proposition 7.1]{Voight-Belyi}.
\end{remark}

\subsection{The monodromy group and the passport are invariant with respect to 
the action of $\GT$-shadows}
\label{sec:invar}

It is easy to see that the degree and the monodromy group of a child's drawing 
are invariant with respect to the action of $\GT$-shadows. (See, for example, \eqref{the-same-mon-group}
in the proof of Proposition \ref{prop:Galois-to-Galois}).

Let us prove that the passport of a child's drawing is invariant with respect to the action of 
charming $\GT$-shadows.
\begin{thm}  
\label{thm:passport-invar}
Let $\N \in \NFI_{\PB_4}(\B_4)$ and $\psi$ be a homomorphism $ \F_2 \to S_d$
that represents $[\psi] \in \Dessin(\N)$. Then, for every charming $\GT$-shadow 
$[m, f]$ in $\GT(\N)$, the child's drawing $[\psi]^{[m,f]}$ has the same passport as $[\psi]$.   
\end{thm}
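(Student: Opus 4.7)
The plan is to verify invariance of each of the three partitions comprising the passport separately. Write $c_1 := \psi(x)$ and $c_2 := \psi(y)$, and denote by $c_i'$ the analogous permutations obtained from $\psi^{(m,f)}$. For the first two partitions the argument is nearly automatic: from \eqref{def-the-action} one has $c_1' = c_1^{2m+1}$ and $c_2' = \psi(f)^{-1} c_2^{2m+1} \psi(f)$, and since conjugation in $S_d$ preserves cycle structure, it suffices to verify $\ct(c_i^{2m+1}) = \ct(c_i)$, i.e.\ $\gcd(2m+1, \ord(c_i)) = 1$. Because $\N_{\F_2} \le \ker\psi$, the order of $c_i$ divides $\ord(x_{i,i+1} \N_{\F_2})$ in $\F_2/\N_{\F_2} \hookrightarrow \PB_3/\N_{\PB_3}$, which divides $N_{\ord}$ by \cite[Proposition 2.3]{GTshadows}; the defining condition of a $\GT$-shadow forces $2m+1$ to be a unit modulo $N_{\ord}$, and coprimality follows.

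For the third partition, since cycle structure is preserved under inversion and $S_d$-conjugation, and $c_3 = (c_1 c_2)^{-1}$ while $c_3' = (c_1' c_2')^{-1}$, it suffices to show $\ct(c_1' c_2') = \ct(c_1 c_2)$. A direct computation from \eqref{def-the-action} gives
\[
c_1' c_2' = \psi\bigl( x^{2m+1} f^{-1} y^{2m+1} f \bigr) = \tilde\psi\bigl( T^{\F_2}_{m,f}(xy)\bigr),
\]
where $\tilde\psi: \F_2/\N_{\F_2} \to S_d$ is the homomorphism induced by $\psi$, so the problem reduces to understanding the conjugacy class of $T^{\F_2}_{m,f}(xy)$ in $\F_2/\N_{\F_2}$.

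The crux of the proof, and its main obstacle, is the following consequence of the hexagon relations satisfied by charming $\GT$-shadows:
\[
T^{\F_2}_{m,f}(xy) \;\equiv\; g^{-1} (xy)^{2m+1} g \pmod{\N_{\F_2}} \quad \text{for some } g \in \F_2.
\]
This is an $\F_2$-reformulation of Drinfeld's pure hexagon relation (4.4); its $\PB_3$-version (the statement that $T^{\PB_3}_{m,f}(x_{13})$ is conjugate to $x_{13}^{2m+1}$ modulo $\N_{\PB_3}$) is the content of Appendix~\ref{app:simple-hexa}, and the transfer to $\F_2$ is carried out using the identity $x_{13} = (x_{23} x_{12})^{-1} c$, the centrality of $c$ in $\PB_3$, and the equality $\F_2 \cap \N_{\PB_3} = \N_{\F_2}$. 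Granting this, $c_1' c_2'$ equals $\psi(g)^{-1}(c_1 c_2)^{2m+1}\psi(g)$ and is thus $S_d$-conjugate to $(c_1 c_2)^{2m+1}$. A final coprimality check, bounding $\ord(c_1 c_2)$ by passing through $\PB_3/\N_{\PB_3}$ via $x_{23} x_{12} = x_{13}^{-1} c$ (which yields $\ord(c_1 c_2) \mid \lcm(\ord(x_{13}\N_{\PB_3}), \ord(c\N_{\PB_3})) \mid N_{\ord}$), then gives $\ct(c_1' c_2') = \ct((c_1 c_2)^{2m+1}) = \ct(c_1 c_2)$, completing the proof.
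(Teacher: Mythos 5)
Your overall strategy is the same as the paper's: the first two entries of the passport are handled by the coprimality of $2m+1$ with $N_{\ord}$ exactly as in the paper's proof, and for the third entry both you and the paper reduce to showing that $T^{\F_2}_{m,f}(xy)$ is conjugate, modulo $\N_{\F_2}$ and by an element of $\F_2$, to $(xy)^{2m+1}$, followed by a second coprimality check (your bound on $\ord(c_1c_2)$ via $x_{23}x_{12}=x_{13}^{-1}c$ is fine and equivalent to the paper's argument with $z\N_{\PB_3}$). The problem is that the conjugacy statement --- which you yourself call ``the crux of the proof, and its main obstacle'' --- is asserted rather than proved, and the justification you offer is not correct: Appendix \ref{app:simple-hexa} does \emph{not} prove that $T^{\PB_3}_{m,f}(x_{13})$ is conjugate to $x_{13}^{2m+1}$ modulo $\N_{\PB_3}$. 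What the appendix proves (Propositions \ref{prop:H-I-always}, \ref{prop:y-z-z-x}, \ref{prop:simple-hexa}) is that for $f\in[\F_2,\F_2]$ the hexagon relations are equivalent to the simplified relations \eqref{H-I}, \eqref{H-II}, together with the auxiliary facts $f(y,z)f(z,y)\in\N_{\F_2}$ and $f(z,x)f(x,z)\in\N_{\F_2}$. Deducing the conjugacy of $T^{\F_2}_{m,f}(xy)$ with $(xy)^{2m+1}$ from these inputs is the main computational content of the paper's proof of Theorem \ref{thm:passport-invar}: it uses \eqref{H-II}, its conjugate by $\te=\si_1\si_2\si_1$ (which brings in $w=(yx)^{-1}$), the rearrangements \eqref{H-IIc} and \eqref{H-IIbb}, the identities $xwx^{-1}=z$, $yzy^{-1}=w$, and the two auxiliary relations above. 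None of this appears in your write-up, and it is precisely here --- and only here --- that the charming hypothesis $f\N_{\F_2}\in[\F_2/\N_{\F_2},\F_2/\N_{\F_2}]$ is used; your proposal never exposes why ``charming'' is needed at all.

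A second, smaller point concerns your ``transfer to $\F_2$'': for the cycle-structure argument the conjugator must be pushable through $\psi$, so it must lie in $\F_2$ (or at worst in $\PB_3$, since conjugation by the central element $c$ acts trivially on $\F_2$); a conjugacy realized only by a non-pure braid would be useless, as such conjugation permutes $x_{12},x_{23},x_{13}$ and $\psi$ does not extend to $\B_3$. There is in fact a short route to the $\PB_3$-version you want: since $T_{m,f}$ is a morphism of the full groupoid $\PaB(3)$, it sends $x_{13}=\si_2 x_{12}\si_2^{-1}$ to $h\,x_{12}^{2m+1}h^{-1}\N_{\PB_3}$ for some $h\in\PB_3\si_2$, and writing $h=p\si_2$ with $p\in\PB_3$ gives $T^{\PB_3}_{m,f}(x_{13})=p\,x_{13}^{2m+1}p^{-1}\N_{\PB_3}$ with $p\in\PB_3$, after which your $x_{13}=(x_{23}x_{12})^{-1}c$ transfer does go through. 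But that argument also has to be written down; as it stands, the central step of your proof is a citation to a result that the cited appendix does not contain.
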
  
\begin{proof} We set 
$$
x:=x_{12}, \qquad y:= x_{23}, \qquad
z:= (xy)^{-1}, \qquad w:= (yx)^{-1}\,,
$$ 
and 
$$
g_x := \psi(x), \qquad g_y:= \psi(y), \qquad 
g_z := \psi(z).
$$

Due to \eqref{def-the-action}, the child's drawing $[\psi]^{[m,f]}$
is represented by the triple:
\begin{equation}
\label{new-triple}
\big( g_x^{2m+1},~ \psi(f)^{-1} g_y^{2m+1} \psi(f), ~ \psi(f)^{-1} g_y^{-2m-1} \psi(f) g_x^{-2m-1} \big).  
\end{equation}

The passport of the child's drawing $[\psi]$ is the triple of partitions 
$(\ct(g_x), \ct(g_y), \ct(g_z))$. 

Thus our goal is to show that 
\begin{equation}
\label{ct-x-ct-y-OK}
\ct(g_x^{2m+1}) = \ct(g_x), 
\qquad 
\ct \big( \psi(f)^{-1} g_y^{2m+1} \psi(f) \big) = \ct(g_y), 
\end{equation}
and 
\begin{equation}
\label{ct-z-OK}
\ct\big( \psi(f)^{-1} g_y^{-2m-1} \psi(f) g_x^{-2m-1} \big) = \ct(g_z)
\end{equation}

It is clear that the second equation in \eqref{ct-x-ct-y-OK} is equivalent 
to $\ct (g_y^{2m+1} ) = \ct(g_y)$. Thus equations \eqref{ct-x-ct-y-OK} are consequences 
of the following simple fact about permutations: if 
$$
\gcd(q, \ord(h)) = 1,
$$
then the permutations $h^q$ and $h$ have the same cycle structure. 

The integer $2m+1$ is coprime with the orders of $g_x := \psi(x)$ and $g_y := \psi(y)$ because 
$2m+1$ is coprime with the orders of $x_{12} \N_{\F_2}$ and $x_{23} \N_{\F_2}$.

The proof of equation \eqref{ct-z-OK} requires more work. 

Since the $\GT$-shadow $[m,f]$ is charming, we may assume, without loss of generality, 
that $f \in [\F_2, \F_2]$. Hence Proposition \ref{prop:simple-hexa} from Appendix 
\ref{app:simple-hexa} implies that the pair 
$(m,f)$ satisfies relations \eqref{H-I} and \eqref{H-II}. 
 
Conjugating \eqref{H-II} by $x^{-m}$, we get 
\begin{equation}
\label{H-IIa}
f(z,x)  z^m f(y,z) y^m f(x,y) x^m \in \N_{\F_2}\,.
\end{equation} 
Furthermore, conjugating \eqref{H-IIa} by $\te := \si_1 \si_2 \si_1$, we get 
\begin{equation}
\label{H-IIb}
f(w,y)  w^m f(x,w) x^m f(y,x) y^m \in \N_{\F_2}\,.
\end{equation} 

Equation \eqref{H-II} implies that 
\begin{equation}
\label{H-IIc}
y^m f(x,y)\,  \N_{\F_2} = f(y,z)^{-1} z^{-m} f(z,x)^{-1} x^{-m} \N_{\F_2}
\end{equation}

Equation \eqref{H-IIb} implies that
\begin{equation}
\label{H-IIbb}
 f(y,x) y^m \, \N_{\F_2} = x^{-m} f(x,w)^{-1} w^{-m} f(w,y)^{-1} \, \N_{\F_2} 
\end{equation} 

To prove equation \eqref{ct-z-OK}, we need to show that the permutations 
\begin{equation}
\label{key-permutations}
\psi(xy) \qquad  \txt{and} \qquad \psi(x^{2m+1} f^{-1} y^{2m+1} f) 
\end{equation}
have the same cycle structure.  

We have 
$$
x^{2m+1} f^{-1} y^{2m+1} f \, \N_{\F_2} = 
x^{2m+1} f(y,x) y^{2m+1} f(x,y) \, \N_{\F_2} = 
x^{2m+1} (f(y,x)y^m) y (y^m f(x,y)) \, \N_{\F_2}\,. 
$$

Applying \eqref{H-IIc} to $y^m f(x,y)$ and \eqref{H-IIbb} to $f(y,x)y^m$ we 
get\footnote{In these calculations, $\sim$ means ``conjugate in $\F_2 /\N_{\F_2}$''.} 
$$
x^{2m+1} f^{-1} y^{2m+1} f \, \N_{\F_2} = 
x^{2m+1}  x^{-m} f(x,w)^{-1} w^{-m} f(w,y)^{-1}  \, y \, 
f(y,z)^{-1} z^{-m} f(z,x)^{-1} x^{-m}  \, \N_{\F_2}
$$
$$
\sim x f(x,w)^{-1} w^{-m} f(w,y)^{-1}  \, y \, 
f(y,z)^{-1} z^{-m} f(z,x)^{-1}  \, \N_{\F_2} 
$$
$$
=  x f(w,x) w^{-m} f(y,w)  \, y \, 
f(z,y) z^{-m} f(x,z) \, \N_{\F_2}\,.
$$

Applying the obvious relations $x w x^{-1} = z$, $y z y^{-1} = w$ to 
$ x f(w,x) w^{-m}$, $f(y,w) y$, respectively, and using \eqref{f-y-z-z-y-f}
and \eqref{f-z-x-x-z-f} from Appendix \ref{app:simple-hexa},
we see that, up to conjugation in $\F_2/ \N_{\F_2}$, 
$$
x^{2m+1} f^{-1} y^{2m+1} f \, \N_{\F_2}  ~\sim~ 
 f(z,x) z^{-m} \, x y \, f(y,z)  
f(z,y) z^{-m} f(x,z) \, \N_{\F_2} = 
$$
$$
f(z,x) z^{-m} \, x y \,  z^{-m} f(x,z) \, \N_{\F_2} ~\sim~
 (xy)^{2m+1} \N_{\F_2}. 
$$
Hence the permutations $\psi(x^{2m+1} f^{-1} y^{2m+1} f)$ and 
$\psi(xy)^{2m+1}$ have the same cycle structure. 

To prove that the permutations in \eqref{key-permutations} have the same cycle structure, 
it remains to show that the cycle structure of $\psi(xy)^{2m+1}$ coincides with the cycle 
structure of $\psi(xy)$. It suffices to show that the order of $xy \N_{\PB_3}$ divides 
$N_{\ord}$. 

Since $xy \N_{\PB_3} = z^{-1} \N_{\PB_3}$, we need to show that 
the order of $z \N_{\PB_3}$ divides  $N_{\ord}$. 

Since $x^{N_{\ord}}_{23} \in \N_{\PB_3}$ and 
$\si_1 x_{23} \si_1^{-1} = c x^{-1}_{23} x^{-1}_{12} = c z$, we have 
$$
c^{N_{\ord}}  z^{N_{\ord}} = (cz)^{N_{\ord}} \in \N_{\PB_3} \,.
$$
Combining this observation with $c^{N_{\ord}} \in  \N_{\PB_3}$, we conclude that 
$z^{N_{\ord} } \N_{\PB_3} =1$. Hence the order of $z \N_{\PB_3}$ divides $N_{\ord}$. 

Since the permutations in \eqref{key-permutations} have the same cycle structure, 
\eqref{ct-z-OK} is proved. 
\end{proof}

\section{Abelian child's drawings}
\label{sec:Abelian}

Recall that the monodromy group of a child's drawing $[\psi]$ of degree $d$
is defined up to conjugation in $S_d$. It is clear that the following definition 
does not depend on the choice of the representative of the monodromy group 
of a child's drawing.  
\begin{defi}  
\label{dfn:Abelian-cD}
A child's drawing $\cD$ is called \e{Abelian} if its monodromy group is Abelian. 
\end{defi}  

For example, for every $d \ge 1$, the pair $((1,2,\dots, d), (1,2,\dots, d))$ represents
an Abelian child's drawing of degree $d$.

Let us prove that 
\begin{prop}  
\label{prop:Abelian-is-Galois}
Every Abelian child's drawing $[(g_1, g_2)]$ is Galois. In particular, the order of the monodromy group
 $G:=\lan g_1, g_2 \ran$ of $[(g_1, g_2)]$ coincides with the degree of $[(g_1, g_2)]$.
 \end{prop}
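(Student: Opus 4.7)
The plan is to show directly that the monodromy group $G := \langle g_1, g_2 \rangle \le S_d$ acts freely on $\{1,2,\dots,d\}$; combined with transitivity (which is built into the definition of a permutation pair), this gives a regular action, hence $|G| = d$, and then the corresponding subgroup $H_{\psi} \le \F_2$ is normal (cf.\ Remark \ref{rem:coverings-Belyi-pairs}), so the drawing is Galois.

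First, I would recall the standard reformulation: a child's drawing $[\psi]$ of degree $d$ is Galois if and only if the stabilizer $\mathrm{Stab}_G(1) \le G$ of any point is trivial, equivalently $|G| = d$ (by orbit--stabilizer, since $G$ acts transitively). I would then use the key Abelian feature: if $G$ acts transitively on a set $X$ and $G$ is Abelian, then all point stabilizers coincide. Indeed, for any $i,j \in X$ choose $k \in G$ with $k \cdot i = j$; then $\mathrm{Stab}_G(j) = k\,\mathrm{Stab}_G(i)\,k^{-1} = \mathrm{Stab}_G(i)$ because $G$ is Abelian.

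Next, I would exploit faithfulness: the inclusion $G \hookrightarrow S_d$ means that $G$ acts faithfully on $\{1,2,\dots,d\}$. But the common stabilizer $H := \mathrm{Stab}_G(i)$ (independent of $i$ by the previous step) is precisely the kernel of the action, since every element of $H$ fixes every point. Faithfulness forces $H = \{1\}$, so the action is regular. By the orbit--stabilizer theorem, $|G| = d$, which is the second claim of the proposition.

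Finally, to conclude that $[(g_1, g_2)]$ is Galois, I would appeal to the description in Remark \ref{rem:coverings-Belyi-pairs} (and Remark \ref{rem:conj-finite-index}): $[\psi]$ is Galois iff the index $d$ subgroup $H_{\psi} \le \F_2$ is normal in $\F_2$, equivalently iff $|\psi(\F_2)| = d$. Since we just showed $|G| = |\psi(\F_2)| = d$, this is immediate. I do not anticipate any real obstacle; the only thing to be careful about is distinguishing the \emph{monodromy group} $G$ (a subgroup of $S_d$, already faithful by construction) from the abstract source group $\F_2$, and making clear that ``Abelian'' refers to $G$ itself, not to the image of some homomorphism whose kernel one would have to track.
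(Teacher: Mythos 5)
Your proof is correct, and it runs in the opposite logical order from the paper's with a slightly different key lemma. The paper works ``upstairs'' in $\F_2$: writing $\K := \ker(\psi)$ and $\H$ for the stabilizer of $1$ in $\F_2$, it observes that since $\F_2/\K \cong G$ is Abelian, \emph{every} subgroup of $\F_2$ containing $\K$ is normal; in particular $\H \unlhd \F_2$, which is Galois by definition (via Remark \ref{rem:coverings-Belyi-pairs}), and then $\H$ must equal its normal core $\K$, which yields $|G| = |\F_2 : \K| = |\F_2 : \H| = d$. You instead work ``downstairs'' in $S_d$: you invoke the classical fact that a faithful transitive Abelian permutation group is regular (all point stabilizers coincide by commutativity, hence equal the kernel of the action, hence are trivial), obtaining $|G| = d$ first, and then deduce the Galois property from the equivalence ``$[\psi]$ is Galois iff $|\psi(\F_2)| = d$'' --- an equivalence the paper itself states and uses later, in the proof of Proposition \ref{prop:Galois-to-Galois}. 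The two arguments are essentially dual to one another: yours makes the regularity of the permutation action explicit and is marginally more self-contained at the level of finite group actions, while the paper's reaches the normality of the stabilizer subgroup of $\F_2$ (the literal covering-space definition of Galois) in one step. Your one point of care --- distinguishing the faithful subgroup $G \le S_d$ from the abstract group $\F_2$ mapping onto it --- is exactly the right thing to watch, and you handle it correctly.
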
  
\begin{proof}
Let $\psi: \F_2 \to S_{d}$ be the group homomorphism corresponding to $(g_1, g_2)$, 
$\K := \ker(\psi)$ and $\H$ be the stabilizer of $1$ in $\F_2$. Note that, since 
the action of $\F_2$ on $\{1,2, \dots, d\}$ is transitive, $d = |G:\H|$.

Since the quotient $\F_2/\K \cong G$ is Abelian, every 
subgroup $\N \le \F_2$ that contains $\K$ is normal in $\F_2$. In particular, $\H$ is normal in 
$\F_2$. 
Thus the child's drawing $[\psi]$ is Galois. 

Since $\K$ is the normal core of $\H$ in $\F_2$ and $\H$ is normal, 
we conclude that $\H = \K$. Thus the order of the monodromy group 
of $[(g_1, g_2)]$ is $|G:\H| = d$. 
\end{proof}

Using the basic properties of group actions, we can prove the following 
useful properties of Abelian child's drawings: 

\begin{prop}  
\label{prop:cycles-same-length}
Let $[(g_1, g_2)]$ be an Abelian child's drawing of degree $d$.  Then 
\begin{itemize}

\item[a)] For every $i \in \{1,2\}$, the permutation $g_i$ is a 
product of disjoint cycles of the same length. 

\item[b)] If $g_1$ (resp. $g_2$) is a cycle of length $d$ then 
$g_2 \in \lan g_1 \ran$ (resp. $g_1 \in  \lan g_2 \ran$). 
 
\end{itemize}
\end{prop}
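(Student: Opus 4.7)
The plan is to exploit the fact, already essentially contained in the proof of Proposition \ref{prop:Abelian-is-Galois}, that an Abelian child's drawing corresponds to the (left) regular action of a finite Abelian group on itself. Writing $\psi : \F_2 \to S_d$ for a representative of $[(g_1, g_2)]$ with $\psi(x) = g_1$, $\psi(y) = g_2$, $\K := \ker(\psi)$, and $\H$ the stabilizer of $1$ in $\F_2$, the proof of Proposition \ref{prop:Abelian-is-Galois} yields $\H = \K$. Consequently the canonical action of $G := \F_2/\K \cong \lan g_1, g_2 \ran$ on $\{1,2,\dots,d\}$, viewed via the bijection $\{1,2,\dots,d\} \leftrightarrow \F_2/\H = \F_2/\K = G$, is identified with the left regular action of $G$ on itself; in particular, this action is both transitive and free.

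For part (a), I would then invoke the elementary observation that under the left regular action of a finite group $G$ on itself, any element $g \in G$ permutes $G$ as a product of $|G|/\ord(g)$ disjoint cycles, each of length $\ord(g)$: indeed, the cycles are precisely the right cosets of $\lan g \ran$ in $G$, traversed by successive left multiplication by $g$. Applying this to $g = g_i$ for $i = 1, 2$ gives the statement.

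For part (b), suppose $g_1$ is a cycle of length $d$. Then $\ord(g_1) = d$, and by Proposition \ref{prop:Abelian-is-Galois} we have $|G| = d$, so $\lan g_1 \ran$ is a subgroup of $G$ of order $d$, which forces $\lan g_1 \ran = G$. In particular $g_2 \in G = \lan g_1 \ran$. The case in which $g_2$ is a $d$-cycle is symmetric.

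There is no real obstacle here: the only substantive point is identifying the action of $G$ on $\{1,\dots,d\}$ with the regular action, which is an immediate consequence of the equality $\H = \K$ established in the proof of Proposition \ref{prop:Abelian-is-Galois}. Once this identification is in place, both \textbf{(a)} and \textbf{(b)} reduce to standard facts about regular actions of finite groups, and the proof should be only a few lines.
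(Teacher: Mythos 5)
Your proof is correct, but it takes a genuinely different route from the paper's. The paper proves part (a) without ever invoking Proposition \ref{prop:Abelian-is-Galois}: it only uses that $g_2$ commutes with $g_1$ and that $\lan g_1, g_2\ran$ is transitive, arguing that $\lan g_2 \ran$ permutes the $\lan g_1\ran$-orbits transitively and that $g_2^k$ restricts to a bijection between any two such orbits, so all orbits have equal size; part (b) is then done by an explicit computation after normalizing $g_1 = (1,2,\dots,d)$. You instead promote the equality $\H = \K$ from the proof of Proposition \ref{prop:Abelian-is-Galois} into the statement that the $G$-set $\{1,\dots,d\}$ is a torsor, i.e.\ isomorphic to $G$ with its left regular action, and then read off both claims from standard facts about regular actions. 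Your identification is legitimate (transitivity plus triviality of the stabilizer of $1$ gives freeness everywhere, since all stabilizers are conjugate), and it buys something the paper's argument does not state: the common cycle length in (a) is exactly $\ord(g_i)$ and the number of cycles is $d/\ord(g_i)$, and (b) collapses to the observation that $\ord(g_1) = d = |G|$ forces $\lan g_1 \ran = G$. The trade-off is that part (a) of the paper's proof is self-contained and works verbatim for any commuting pair generating a transitive subgroup, whereas yours routes through the Galois property; this is essentially the group-theoretic shadow of the deduction mentioned in Remark \ref{rem:ramif-indices-Galois}, but without any appeal to branched coverings, so it remains elementary.
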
  
\begin{proof} As above, we set $G:=\lan g_1, g_2 \ran$.

For a), it suffices to prove that $g_1$ is a product of disjoint cycles of 
the same length. 

Let $H_1: = \lan g_1 \ran$ and 
$$
\bigsqcup_{t=1}^r O_t
$$ 
be the partition of the set $\{1,2, \dots, d\}$ into the orbits of the action of $H_1$. 
If $r=1$, i.e. $g_1$ is a cycle of length $d$, then there is nothing to prove.
So we consider the case when the number of orbits $r$ is $\ge 2$. 
Our goal is to prove that all orbits have the same size. 

Since $g_2$ commutes with $g_1$, the subgroup $H_2:=\lan g_2 \ran$
acts on the set of orbits $\{O_1, \dots, O_r\}$. Furthermore, since the permutation 
group $G$ is transitive, $H_2$ acts on $\{O_1, \dots, O_r\}$ transitively. 
 
Therefore, for two distinct orbits $O$ and $\ti{O}$, there exists $k \in \bbZ_{\ge 1}$
such that $g_2^k(i) \in \ti{O}$ for every $i \in O$, i.e. $g_2^k \big|_{O}$ is a map 
\begin{equation}
\label{O-to-ti-O}
O \to \ti{O}.
\end{equation}

Since $g_2^{-k}\big|_{\ti{O}}$ is the inverse of \eqref{O-to-ti-O}, we conclude that 
$O$ and $\ti{O}$ have the same size. Thus the first statement is proved. 

The second statement is a particular case of one of the exercises in 
\cite[Section 4.3]{DummitFoote}. 

Here is a proof of this statement. 

Without loss of generality, we may assume that $g_1 = (1,2,\dots, d)$. 

If $g_2(1)=1$ then, for every $k \in \{2,3, \dots, d\}$, we have 
$$
g_2 (k) = g_2 (g_1^{k-1}(1)) = g_1^{k-1} (g_2(1)) = g_1^{k-1}(1) = k. 
$$
Thus, in this case, $g_2 = \id$. 

If $g_2(1)=k > 1$ then the permutation $h:= g_1^{1-k} g_2$ satisfies these two properties: 
\begin{itemize}

\item $h$ commutes with $g_1$ and 

\item $h(1) = 1$. 

\end{itemize}

Using the previous argument, we conclude that $h= \id$ and hence $g_2 \in \lan g_1 \ran$. 
\end{proof}

\begin{remark}  
\label{rem:lengths-of-cycles} 
Assume that we are in the set-up of Proposition \ref{prop:cycles-same-length}
and $\{1,2,\dots, d\}$ partitions into $r \ge 2$ orbits of the $\lan g_1 \ran$-action. 
Although, $\lan g_2 \ran$ acts transitively on the set of these orbits,  in general, 
$g_2$ is not a product of cycles of length $r$.  
For example, the permutations 
$$
g_1 = (1, 2, 3, 4)(5, 6, 7, 8)(9, 10, 11, 12),  
\qquad
g_2 = (1, 6, 12, 3, 8, 10)(2, 7, 9, 4, 5, 11)
$$
generate an Abelian and transitive subgroup of $S_{12}$.
\end{remark}
\begin{remark}  
\label{rem:ramif-indices-Galois}
Since every Abelian child's drawing is Galois, Proposition
\ref{prop:cycles-same-length} can be deduced from the properties 
of Galois branched coverings (see, for example, \cite[Proposition 3.2.10]{Szamuely}). 
Of course, it is strange to use Riemann's existence theorem \cite{RET} for a statement 
that can be proved using basic properties of group actions. 
\end{remark}

\subsection{$\GTSh$-orbit of an Abelian child's drawing is a singleton}
\label{sec:singleton}

Let us prove the following auxiliary statement:
\begin{prop}  
\label{prop:conjugate}
Suppose that $(g_1, g_2) \in S_d \times S_d$ generate an Abelian and transitive subgroup $G \le S_d$. 
Then, for every $r \in \bbZ$ such that $\gcd(r,\ord(g_1)) = \gcd(r,\ord(g_2)) =1$, there exists 
$h \in S_d$ such that 
\begin{equation}
\label{g1r-g2r}
g_1^r = h g_1 h^{-1} \qquad \txt{and} \qquad g_2^r = h g_2 h^{-1}.
\end{equation}
\end{prop}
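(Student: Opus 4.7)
The plan is to exploit the regular action of the Abelian monodromy group $G := \langle g_1, g_2 \rangle$ and realize the conjugating permutation $h$ as the ``$r$-th power'' map on $G$.

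First, I would invoke Proposition \ref{prop:Abelian-is-Galois}: the child's drawing corresponding to $(g_1,g_2)$ is Galois, so $|G| = d$ and hence the transitive action of $G$ on $\{1,2,\dots,d\}$ is regular (free and transitive). Fixing any basepoint $x_0 \in \{1,\dots,d\}$, the map $G \to \{1,\dots,d\}$, $g \mapsto g \cdot x_0$ is a bijection, under which each $g_i$ acts on $\{1,\dots,d\}$ as the left-multiplication permutation $L_{g_i} : G \to G$, $L_{g_i}(g) := g_i g$.

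Next, I would define $h : G \to G$ by $h(g) := g^r$. Since $G$ is Abelian and generated by $g_1, g_2$, its exponent is $\exp(G) = \lcm(\ord(g_1), \ord(g_2))$. The hypothesis $\gcd(r, \ord(g_i)) = 1$ for $i=1,2$ therefore gives $\gcd(r, \exp(G)) = 1$, which (combined with $G$ Abelian) implies that the endomorphism $g \mapsto g^r$ is a bijection of $G$, i.e. $h \in S_d$ under the identification above.

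Finally, I would verify the conjugation relations by direct calculation: for any $g \in G$, using that $G$ is Abelian,
\begin{equation*}
h \, L_{g_i} \, h^{-1}(g) \;=\; h\bigl(g_i \cdot h^{-1}(g)\bigr) \;=\; \bigl(g_i \cdot h^{-1}(g)\bigr)^{r} \;=\; g_i^r \cdot \bigl(h^{-1}(g)\bigr)^{r} \;=\; g_i^r \cdot g \;=\; L_{g_i^r}(g),
\end{equation*}
so $h L_{g_i} h^{-1} = L_{g_i^r}$ for $i=1,2$, which is precisely \eqref{g1r-g2r}.

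The only genuinely non-trivial step is the first one — recognizing that the Abelian hypothesis forces the action to be regular so that $\{1,\dots,d\}$ can be identified with $G$ itself; once this identification is in place, the remaining observations (that $g \mapsto g^r$ is a bijection under the coprimality assumption, and that it intertwines the left-multiplication operators as desired) are elementary.
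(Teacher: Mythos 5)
Your proof is correct and follows essentially the same route as the paper: both arguments use Proposition \ref{prop:Abelian-is-Galois} to get $|G|=d$ (hence a regular action), observe that the coprimality hypothesis makes $w\mapsto w^r$ an automorphism of the Abelian group $G$, and conclude that the $r$-th power of the action is conjugate to the original. The only cosmetic difference is that you exhibit the conjugating permutation $h$ explicitly as the power map on $G$ and verify the intertwining by computation, whereas the paper deduces its existence abstractly from the uniqueness up to isomorphism of the transitive $G$-set of size $|G|$.
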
  
\begin{proof} Recall that, due to Proposition \ref{prop:Abelian-is-Galois}, the group $G$ has 
order $d$. The key fact that is used in the proof of the desired statement
is that any $G$-set of size $d$ with a transitive action 
of $G$ is isomorphic to the $G$-set $G$ with the standard $G$-action by multiplication.

Let us show that $r$ is coprime to $\ord(w)$ for every $w \in G$. 
 
Since $r$ is coprime to $\ord(g_1)$ and  $\ord(g_2)$, $r$ is coprime 
to $\ord(g_1^{t_1})$ and  $\ord(g_2^{t_2})$ for all integers $t_1$ and $t_2$. 

Since $G =  \lan g_1, g_2 \ran$ is Abelian, every $w \in G$ can be written in the form 
$$
w = g_1^{t_1} g_2^{t_2}.
$$ 
Let $k_i := \ord(g_i^{t_i})$ and $k$ be the least common multiple of $k_1$ and $k_2$. 
Since $r$ is coprime to $k_1$ and $k_2$, $r$ is also coprime to $k$. 

Since $w^k = \id$, we conclude that $\ord(w) | k$. Thus we proved that 
$r$ is coprime to $\ord(w)$ for every $w \in G$. 
 
Since the group $G = \lan g_1, g_2 \ran$ is Abelian, the formula  
\begin{equation}
\label{theta}
\te(w):= w^r, \qquad w \in G
\end{equation}
defines a group endomorphism $\te : G\to G$.

Moreover, since $r$ is coprime to $\ord(w)$ for every $w \in G$, 
$w^r=\id$ if and only if $w = \id$. 
Hence the homomorphism $\te$ is injective. 
Since the group $G$ is finite, the injectivity of $\te$ implies its 
surjectivity. Thus $\te$ is actually an automorphism of $G$. 

Precomposing the inclusion homomorphism $G \to S_d$ with $\te$, we get 
the new action $\ma$ of $G$ given by the formula
$$
\ma(w)(i) := w^r(i).
$$ 
 
Since $\te$ is an automorphism, the new action of $G$ on 
$\{1,2,\dots, d\}$ is also transitive. Since $G$ has order $d$ and the new action of $G$ is  
transitive, we conclude that the new $G$-set is isomorphic to the original one. 

The proposition is proved.
\end{proof}

Let us use Proposition \ref{prop:conjugate} to prove that the orbit of 
every Abelian child's drawing (with respect to the action of $\GT$-shadows)
is a singleton:

\begin{cor}  
\label{cor:GT-orbit-singleton}
Let $[(g_1, g_2)]$ be an Abelian child's drawing of degree $d$ and 
$\N \in \NFI_{\PB_4}(\B_4)$ such that $[(g_1, g_2)]$ is subordinate to $\N$. 
For every $[m,f] \in \GT(\N)$, we have 
$$
[(g_1, g_2)]^{[m,f]} = [(g_1 , g_2)]. 
$$
\end{cor}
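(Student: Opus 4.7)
The plan is to reduce the statement directly to Proposition \ref{prop:conjugate}. Let $\psi: \F_2 \to S_d$ be a representative of $[(g_1, g_2)]$ with $\psi(x) = g_1$ and $\psi(y) = g_2$, and let $G := \lan g_1, g_2 \ran = \psi(\F_2)$. By formula \eqref{def-the-action}, the child's drawing $[(g_1, g_2)]^{[m,f]}$ is represented by the permutation pair
$$
\big( g_1^{2m+1},\ \psi(f)^{-1} g_2^{2m+1}\, \psi(f) \big).
$$
So the task is to produce $h \in S_d$ conjugating $(g_1, g_2)$ to this pair.

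The first observation is that, because $[(g_1, g_2)]$ is Abelian, the element $\psi(f)$ lies in the Abelian group $G$, hence commutes with $g_2$. Therefore
$$
\psi(f)^{-1} g_2^{2m+1}\, \psi(f) = g_2^{2m+1},
$$
so the new pair simplifies to $(g_1^{2m+1}, g_2^{2m+1})$.

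Next I verify the coprimality hypothesis needed to apply Proposition \ref{prop:conjugate} with $r = 2m+1$. Since $[(g_1, g_2)]$ is subordinate to $\N$, we have $\N_{\F_2} \le \ker(\psi)$, so the orders of $g_1 = \psi(x)$ and $g_2 = \psi(y)$ in $S_d$ divide the orders of $x_{12} \N_{\F_2}$ and $x_{23} \N_{\F_2}$ in $\F_2/\N_{\F_2}$, both of which divide $N_{\ord}$. As $[m,f] \in \GT(\N)$, the integer $2m+1$ represents a unit in $\bbZ/N_{\ord}\bbZ$, so it is coprime to both $\ord(g_1)$ and $\ord(g_2)$.

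With these hypotheses in place, Proposition \ref{prop:conjugate} produces $h \in S_d$ such that $h g_1 h^{-1} = g_1^{2m+1}$ and $h g_2 h^{-1} = g_2^{2m+1}$, which is exactly what we need to conclude $[(g_1^{2m+1}, g_2^{2m+1})] = [(g_1, g_2)]$, hence $[(g_1, g_2)]^{[m,f]} = [(g_1, g_2)]$. There is no real obstacle here: the entire content of the corollary lies in the two preceding observations, namely the collapse of the conjugation by $\psi(f)$ using commutativity of $G$, and the coprimality of $2m+1$ with the orders of $g_1, g_2$ coming from subordinance.
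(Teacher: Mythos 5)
Your proposal is correct and follows essentially the same route as the paper's own proof: collapse the conjugation by $\psi(f)$ using commutativity of the monodromy group, verify that $2m+1$ is coprime to $\ord(g_1)$ and $\ord(g_2)$ via the fact that these orders divide $N_{\ord}$ and $2m+1$ is a unit modulo $N_{\ord}$, and then invoke Proposition \ref{prop:conjugate}. No gaps.
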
  
\begin{proof} Let $\psi: \F_2 \to \lan g_1, g_2 \ran$ be the homomorphism 
that sends $x_{12}$ (resp. $x_{23}$) to $g_1$ (resp. to $g_2$). Furthermore, 
let $h := \psi(f)$.  

We know that $[(g_1, g_2)]^{[m,f]}$ is represented by the pair 
$$
( g_1^{2 m+1}\,,~ h^{-1} g_2^{2 m+1} h ).
$$
Since the group $\lan g_1, g_2 \ran$ is Abelian, $h^{-1} g^{2 m+1}_2 h = g_2^{2 m+1}$.
Hence the child's drawing $[(g_1, g_2)]^{[m,f]}$ is represented by the pair 
$$
( g_1^{2 m+1}\,,~  g_2^{2 m+1} ).
$$

Since $g_1$ (resp. $g_2$) is the image of $x_{12} \N_{\F_2}$ (resp. $x_{23} \N_{\F_2}$) 
and $\ord(x_{12} \N_{\F_2}) = \ord(x_{12} \N_{\PB_3})$,  $\ord(x_{23} \N_{\F_2}) = 
\ord(x_{23} \N_{\PB_3})$, we have 
$\ord(g_1) | \ord(x_{12} \N_{\PB_3})$ and $\ord(g_2) | \ord(x_{23} \N_{\PB_3})$. 

Hence, using the fact that $N_{\ord}$ is a multiple of the orders 
$\ord(x_{12} \N_{\PB_3})$,  $\ord(x_{23} \N_{\PB_3})$ and $2m+1$ is coprime 
with $N_{\ord}$, we conclude that $2m+1$ is coprime with the integers 
$\ord(g_1)$ and $\ord(g_2)$. 

Thus, applying Proposition \ref{prop:conjugate}, we conclude that 
the pairs $( g_1^{2 m+1}\,,~  g_2^{2 m+1} )$ and $(g_1, g_2)$
represent the same child's drawing.
\end{proof}

\bigskip

Combining \cite[Proposition 2.5]{C-Harbater-Hurwitz}, \cite[Corollary on page 2]{JV-errata} 
with Proposition \ref{prop:Abelian-is-Galois} and Corollaries \ref{cor:hierarchy}, 
\ref{cor:GT-orbit-singleton}, we deduce the following statement: 
\begin{cor}  
\label{cor:GQ-orbit-singleton}
Every Abelian child's drawing $\cD$ admits a Belyi pair $(X,\ga)$ defined over $\bbQ$. 
\end{cor}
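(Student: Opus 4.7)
The plan is to deduce this as a direct consequence of the hierarchy of orbits established in Corollary~\ref{cor:hierarchy} together with the singleton orbit result for Abelian child's drawings.

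First, I would invoke Proposition~\ref{prop:subordinate} to pick an element $\N \in \NFI_{\PB_4}(\B_4)$ that dominates the given Abelian child's drawing $\cD$, so that $\cD \in \Dessin(\N)$. By Corollary~\ref{cor:GT-orbit-singleton}, the $\GT(\N)$-orbit of $\cD$ is then the singleton $\{\cD\}$; in particular, the $\GT^{\hs}(\N)$-orbit $\GT^{\hs}(\N)(\cD)$ is also $\{\cD\}$, since $\GT^{\hs}(\N) \subset \GT(\N)$.

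Next I would apply Corollary~\ref{cor:hierarchy}, which gives the chain of inclusions
\begin{equation*}
\GT^{\hs}(\N)(\cD) \supset \GTh(\cD) \supset G_{\bbQ}(\cD).
\end{equation*}
Since the left-most orbit is $\{\cD\}$, we conclude $G_{\bbQ}(\cD) = \{\cD\}$. Equivalently, the stabilizer $H_{\cD}$ of $\cD$ in $G_{\bbQ}$ coincides with $G_{\bbQ}$, so the field of moduli satisfies $M_{\cD} = \bbQ$.

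Finally I would combine this with Proposition~\ref{prop:Abelian-is-Galois}, which guarantees that $\cD$ is Galois, and invoke \cite[Proposition 2.5]{C-Harbater-Hurwitz} together with \cite[Corollary on page 2]{JV-errata}: a Galois child's drawing always admits a Belyi pair defined over its field of moduli. Since $M_{\cD} = \bbQ$, this Belyi pair $(X,\ga)$ is defined over $\bbQ$, as desired. No real obstacle arises here, because all substantive work has already been done in the earlier results; the only care required is to verify that Corollary~\ref{cor:GT-orbit-singleton} and Corollary~\ref{cor:hierarchy} genuinely apply to the chosen $\N$, which follows from the fact that $\cD$ is subordinate to $\N$ and the inclusion $\GT^{\hs}(\N) \subset \GT(\N)$.
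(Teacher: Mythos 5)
Your proposal is correct and follows essentially the same route as the paper, which likewise deduces the corollary by combining Proposition~\ref{prop:Abelian-is-Galois}, Corollary~\ref{cor:GT-orbit-singleton}, Corollary~\ref{cor:hierarchy}, and the fact from \cite{C-Harbater-Hurwitz} and \cite{JV-errata} that a Galois child's drawing admits a Belyi pair over its field of moduli. Your write-up merely makes explicit the intermediate steps (choosing a dominating $\N$ via Proposition~\ref{prop:subordinate} and passing from a singleton $G_{\bbQ}$-orbit to $M_{\cD}=\bbQ$) that the paper leaves to the reader.
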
  
\begin{remark}  
\label{rem:Hidalgo}
As far as the author understands, the statement of the above corollary is a consequence of 
\cite[Corollary 3.4]{Debes-Douai} and it was proved directly 
in paper \cite{Hidalgo} by R.A. Hidalgo (see \cite[Corollary 3.5]{Hidalgo}). 
\end{remark}

\section{Examples of $\GTSh^{\hs}$-orbits for (non-Abelian) child's drawings}
\label{sec:examples}

\subsection{An example of degree $6$, genus $0$ and orbit size $2$}

Let us denote by $\cD_{6,0}$ the degree $6$ child's drawing represented by triple
\begin{equation}
\label{cD-6-0}
\big( (1,4,5,2)(3,6), ~(1,6,3,2)(4,5),~ ((1, 3), (2, 4)) \big).
\end{equation}
The passport of $\cD_{6,0}$ is 
$$
((4,2),(4,2), (2,2,1,1))
$$
and its genus is $0$. A Belyi map representing $\cD_{6,0}$ can be found at 
\url{https://beta.lmfdb.org/Belyi/6T10/4.2/4.2/2.2.1.1/a}.

The $G_{\bbQ}$-orbit of $\cD_{6,0}$ has size $2$ and the Galois conjugate of $\cD_{6,0}$ is represented 
by the permutation triple 
\begin{equation}
\label{cD-6-0-conj}
\big( (1, 4, 5, 2)(3, 6), (1, 2, 5, 6)(3, 4),~ (3, 5)(4, 6) \big).
\end{equation}

Using \cite{Package-GT}, we found an element $\N \in \NFI_{\PB_4}(\B_4)$ that dominates 
$\cD_{6,0}$. $\N$ is the kernel of a group homomorphism $\PB_4 \to S_{192}$ stored in 
the file $E\_dde6genus0$ (see \cite[Section 6]{Package-GT}). Here are some basic facts about 
this element of $\NFI_{\PB_4}(\B_4)$:
\begin{itemize}

\item $|\PB_4: \N| = 289,207,845,356,544 = 2^{10} \cdot 3^{24}$;

\item $|\PB_3:  \N_{\PB_3}| = 46656 = 2^6 \cdot 3^6$;

\item $|\F_2:  \N_{\F_2}| =11664 = 2^4 \cdot 3^6$;

\item the order of the commutator subgroup $[\F_2/ \N_{\F_2} ,  \F_2/ \N_{\F_2}] $ is $729 = 3^6$;

\item $N_{\ord} := |\PB_2 :  \N_{\PB_2} | = 4$;

\item $\N$ is isolated and $\GT^{\hs}(\N)$ is a non-Abelian group of order $32 = 2^5$. 

\end{itemize}

\bigskip

The orbit $\GT^{\hs}(\N) (\cD_{6,0})$ coincides with the $G_{\bbQ}$-orbit of $\cD_{6,0}$. 

The interesting feature of $\cD_{6,0}$ is that the $\GT$-shadow $[3, 1_{\F_2}] \in \GT^{\hs}(\N)$  corresponding 
to the complex conjugation acts trivially on $\cD_{6,0}$. A $\GT$-shadow that transforms $\cD_{6,0}$ to its Galois 
conjugate is represented by the pair 
$$
\big( 1, y x y x^2 y^2 x^{-3} y^{-4} \big) \in \bbZ \times [\F_2 , \F_2].
$$
 
In fact, there are $\GT$-shadows that transforms $\cD_{6,0}$ to its Galois conjugate and belong to the kernel of
the virtual cyclotomic character. Here is an example of such a $\GT$-shadow
$$
[0,  y x y^3 x y^2 x^2 y^{-6} x^{-4} ]  \in \GT^{\hs}(\N). 
$$

\subsection{An example of degree $5$, genus $0$ and orbit size $2$}

Let $\cD_{5,0}$ be the child's drawing of degree $5$ represented by the permutation triple 
\begin{equation}
\label{cD-5-0}
\big((1,4,5,2), ~(2,3,5,4),~(1,4)(2,3) \big).
\end{equation}
The passport of $\cD_{5,0}$ is 
$$
((4,1),(4,1), (2,2,1))
$$
and its genus is $0$. A Belyi map representing $\cD_{5,0}$ can be found at 
\url{https://beta.lmfdb.org/Belyi/5T3/4.1/4.1/2.2.1/a}.

The $G_{\bbQ}$-orbit of $\cD_{5,0}$ has size $2$ and the Galois conjugate of $\cD_{5,0}$ is represented 
by the permutation triple 
\begin{equation}
\label{cD-5-0-conj}
\big( (1,2,5,4),~ (1,5,2,3), ~(1,4)(2,3) \big).
\end{equation}

Using \cite{Package-GT}, we found an element $\N \in \NFI_{\PB_4}(\B_4)$ that dominates 
$\cD_{5,0}$. $\N$ is the kernel of a group homomorphism $\PB_4 \to S_{160}$ stored in 
the file $E\_dde5genus0$ (see \cite[Section 6]{Package-GT}). 

Here are some basic facts about this element of $\NFI_{\PB_4}(\B_4)$:
\begin{itemize}

\item $|\PB_4: \N| = 250000000000 = 2^{10} \cdot 5^{12}$;

\item $|\PB_3:  \N_{\PB_3}| = 8000 = 2^6 \cdot 5^3$;

\item $|\F_2:  \N_{\F_2}| = 2000 = 2^4 \cdot 5^3$;

\item the order of the commutator subgroup $[\F_2/ \N_{\F_2} ,  \F_2/ \N_{\F_2}] $ is $125=5^3$;

\item $N_{\ord} := |\PB_2 :  \N_{\PB_2} | = 4$;

\item $\N$ is not settled and the connected component of $\N$ in $\GTSh^{\hs}$ has exactly 
two objects; the size of $\GT^{\hs}(\N)$ is $16$.

\end{itemize}

\bigskip

The orbits $\GT^{\hs}(\N)(\cD_{5,0})$ and $G_{\bbQ}(\cD_{5,0})$ coincide.

\subsection{Examples of child's drawing subordinate to $\N^{(5)}$ and $\N^{(29)}$ from 
list \eqref{35-cool-guys}}

Recall that \cite[Section 4]{GTshadows} presents basic information about $35$ selected 
elements $\N^{(0)}, \dots, \N^{(34)}$ of $\NFI_{\PB_4}(\B_4)$.
In this subsection, we describe child's drawings of degrees $7, 15$ and $18$ subordinate to $\N^{(29)}$ 
and a child's drawing of degree $8$ subordinate to $\N^{(5)}$. 

Here is the basic information about $\N^{(29)}$: 
\begin{itemize}

\item $|\PB_4 : \N^{(29)}| = 2520= 2^3 \cdot 3^2 \cdot 5 \cdot 7$;

\item $|\PB_3 : \N^{(29)}_{\PB_3}| = 136080 = 2^4 \cdot 3^5 \cdot 5 \cdot 7$;

\item $|\F_2 :  \N^{(29)}_{\F_2}| = 45,360= 2^4 \cdot 3^4 \cdot 5 \cdot 7$;

\item $N^{(29)}_{\ord} = 6$;

\item $\N^{(29)}$ is an isolated object of $\NFI_{\PB_4}(\B_4)$ and 
$\GT^{\hs}(\N^{(29)})$ is non-Abelian group of order $48=2^4 \cdot 3$. 

\end{itemize}

The child's drawings represented by the following permutation triples
\begin{equation}
\label{cD-7-0}
\big( (1,2,3)(4,5)(6,7),~(1,5,6)(2,7)(3,4), ~(1,4)(2,6)(3,7,5) \big),
\end{equation}
\vspace{0.01cm}
\begin{equation}
\label{cD-15-4}
\begin{array}{c}
\big( (1, 2, 3, 4, 5, 6)(7, 8, 9, 10, 11, 12)(13, 14, 15), \\[0.3cm]
(1, 2, 6, 12, 9, 15)(3, 7, 13) (4, 11, 14, 5, 8, 10), \\[0.3cm]
(1, 2, 15, 11, 8, 3)(4, 13, 9, 5, 10, 12)(6, 14, 7) \big)
\end{array}
\end{equation}
\vspace{0.1cm}
\begin{equation}
\label{cD-18-4}
\begin{array}{c}
\big( (1, 10, 17, 2, 9, 18)(3, 12, 13, 4, 11, 14)(5, 8, 15, 6, 7, 16), \\[0.3cm]
(1, 16, 11, 2, 15, 12)(3, 18, 7, 4, 17, 8)(5, 14, 9, 6, 13, 10) \\[0.3cm]
(1, 3, 5)(2, 4, 6)(7, 9, 11)(8, 10, 12)(13, 15, 17)(14, 16, 18) \big)
\end{array}
\end{equation}
are subordinate to $\N^{(29)}$.
We denote by $\cD_{7,0}$ (resp. $\cD_{15,4}$, $\cD_{18,4}$) the child's drawing represented by 
the permutation triple in \eqref{cD-7-0} (resp. in \eqref{cD-15-4},  \eqref{cD-18-4}). 

The child's drawings $\cD_{7,0}, \cD_{15,4}$ and $\cD_{18,4}$ have degrees 
$7$, $15$ and $18$, respectively. 

The passport of $\cD_{7,0}$ is $((3,2,2), (3,2,2), (3,2,2))$ and its genus is zero.
A Belyi map that represents $\cD_{7,0}$ can be found at 
\url{https://beta.lmfdb.org/Belyi/7T6/3.2.2/3.2.2/3.2.2/a}.

The passports of $\cD_{15,4}$ and $\cD_{18, 4}$ are
$$
((6,6,3), (6,6,3), (6,6,3)) ~~~\txt{and}~~~
((6, 6, 6), (6, 6, 6), (3, 3, 3, 3, 3, 3))
$$
respectively. Both child's drawings $\cD_{15,4}$ and $\cD_{18, 4}$ have genus $4$.

The orbit $\GT^{\hs}(\N^{(29)})(\cD_{7,0})$ coincides with the $G_{\bbQ}$-orbit of $\cD_{7,0}$.
In fact, using \cite{Package-GT}, one can show that there is only one child's drawing with the passport 
$((3,2,2), (3,2,2), (3,2,2))$. Hence the $G_{\bbQ}$-orbit of $\cD_{7,0}$ and 
the $\GT^{\hs}(\N)$-orbit of $\cD_{7,0}$ (for any $\N \in \NFI_{\PB_4}(\B_4)$ that dominates 
$\cD_{7,0}$) must have size $1$. 

The orbit $\GT^{\hs}(\N^{(29)})(\cD_{15,4})$ has two 
elements\footnote{Please see Appendix \ref{app:package} or the description 
of the command $orbit(~,~)$ on page 25 of \cite{Package-GT}.} 
and the ``conjugate'' of $\cD_{15,4}$ is 
represented by the permutation triple 
\begin{equation}
\label{cD-15-4-conj}
\begin{array}{c}
\big( (1, 6, 5, 4, 3, 2)(7, 12, 11, 10, 9, 8)(13, 15, 14),  \\[0.3cm]
(1, 15, 9, 12, 6, 2)(3, 13, 7)(4, 10, 8, 5, 14, 11), \\[0.3cm]
(1, 6, 2, 7, 10, 14)(3, 11, 9, 4, 8, 15)(5, 12, 13) \big).
\end{array}
\end{equation}

Currently, database \cite{Belyi_database} does not contain Belyi maps of degree $15$. 
However, we can prove that
\begin{claim}  
\label{cl:is-the-Galois-conj}
The $G_{\bbQ}$-orbit of $\cD_{15,4}$ coincides with the orbit $\GT^{\hs}(\N^{(29)})(\cD_{15,4}) 
= \{\cD_{15,4}, \cD^*_{15,4} \}$, where $\cD^*_{15, 4}$ is the child's drawing represented by 
the permutation triple in \eqref{cD-15-4-conj}.
\end{claim}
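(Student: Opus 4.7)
The plan is to prove the claim in two steps. First, the inclusion $G_{\bbQ}(\cD_{15,4}) \subseteq \GT^{\hs}(\N^{(29)})(\cD_{15,4}) = \{\cD_{15,4}, \cD^*_{15,4}\}$ is immediate from Corollary \ref{cor:hierarchy}, using the given computation of the $\GT^{\hs}$-orbit. For the reverse inclusion I would exhibit a specific element of $G_{\bbQ}$ whose action sends $\cD_{15,4}$ to $\cD^*_{15,4}$; the natural candidate is complex conjugation $c_{\infty} \in G_{\bbQ}$.

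Under the Ihara embedding, $c_{\infty}$ corresponds to the pair $(\hat{m}, \hat{f}) = (-1, 1) \in \Zhat \times \wh{\F}_2$. Applying the functor $\PR$ from Proposition \ref{prop:GThat-NFI} and reducing $\hat{m}$ modulo $N^{(29)}_{\ord} = 6$, the associated $\GT$-shadow in $\GT^{\hs}(\N^{(29)})$ is $[5, 1_{\F_2}]$ (since $-1 \equiv 5 \pmod 6$). By Theorem \ref{thm:compatible}, the action of $c_{\infty}$ on $\cD_{15,4}$ agrees with the action of $[5, 1_{\F_2}]$, so it suffices to compute the latter.

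Using formula \eqref{def-the-action}, if $\psi : \F_2 \to S_{15}$ represents $\cD_{15,4}$ with $\psi(x) = g_1$ and $\psi(y) = g_2$ as in \eqref{cD-15-4}, then $\cD_{15,4}^{[5, 1_{\F_2}]}$ is represented by $(g_1^{11}, g_2^{11})$. Since each of $g_1$ and $g_2$ has cycle type $(6,6,3)$ and hence order $6$, we obtain $g_1^{11} = g_1^{-1}$ and $g_2^{11} = g_2^{-1}$. Therefore $\cD_{15,4}^{[5, 1_{\F_2}]}$ is represented by the permutation triple $(g_1^{-1}, g_2^{-1}, g_2 g_1)$.

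To conclude, I would verify by direct computation that this triple coincides, entry by entry, with \eqref{cD-15-4-conj}: the first two entries can be read off immediately as the inverses of $g_1$ and $g_2$, while the third requires multiplying out the permutations $g_2$ and $g_1$ in $S_{15}$ and checking that the resulting cycle decomposition equals $(1,6,2,7,10,14)(3,11,9,4,8,15)(5,12,13)$. This establishes $\cD_{15,4}^{[5, 1_{\F_2}]} = \cD^*_{15,4}$, so $\cD^*_{15,4} \in G_{\bbQ}(\cD_{15,4})$, completing the proof. The only real obstacle is bookkeeping: correctly tracing complex conjugation through the Ihara embedding and the functor $\PR$ to the representative $[5, 1_{\F_2}]$, and performing the permutation arithmetic; both steps are mechanical once set up.
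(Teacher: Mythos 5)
Your proposal is correct and follows essentially the same route as the paper: both directions are handled identically (the inclusion $G_{\bbQ}(\cD_{15,4}) \subseteq \GT^{\hs}(\N^{(29)})(\cD_{15,4})$ via Corollary \ref{cor:hierarchy}, and the reverse inclusion by observing that complex conjugation corresponds to the $\GT$-shadow represented by $(-1, 1_{\F_2})$, equivalently your $[5,1_{\F_2}]$ since $-1\equiv 5 \pmod{6}$, which sends $(g_1,g_2)$ to $(g_1^{-1},g_2^{-1})$, i.e.\ to $\cD^*_{15,4}$). You merely spell out the permutation bookkeeping that the paper leaves implicit.
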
  
\begin{proof}
The image of the complex conjugation in $\GT^{\hs}(\N^{(29)})$ is represented by the pair 
$(-1, 1_{\F_2})$ and 
$$
\cD^*_{15,4} = \cD^{[-1, 1_{\F_2}]}_{15,4}\,.
$$
Hence  $\GT^{\hs}(\N^{(29)}) \subset G_{\bbQ}(\cD_{15,4})$. 
The inclusion $G_{\bbQ}(\cD_{15,4}) \subset \GT^{\hs}(\N^{(29)})$ is a consequence 
of Corollary \ref{cor:hierarchy}.
\end{proof}

\bigskip

We should remark that the $G_{\bbQ}$-orbit of $\cD_{15,4}$ is significantly smaller than the number 
of child's drawings with the passport $((6,6,3), (6,6,3), (6,6,3))$. 
Using \cite{Package-GT}, one can show that the number of child's drawings with the passport 
$((6,6,3), (6,6,3), (6,6,3))$ is $\ge 260$.

\bigskip

The orbit $\GT^{\hs}(\N^{(29)})(\cD_{18,4})$ is a singleton. 
The child's drawing $\cD_{18,4}$ is special because the corresponding (degree 18)
covering of $\CP^1 -\{0,1, \infty\}$ is Galois. In fact, the child's drawing $\cD_{18, 4}$
can be represented by the subgroup of $\F_2$ that corresponds to the 
commutator subgroup $[\F_2/\N_{\F_2},  \F_2/\N_{\F_2}]$. 
(See Remark \ref{rem:conj-finite-index}.)

\bigskip

Here is the basic information about element $\N^{(5)}$ from \eqref{35-cool-guys}: 
\begin{itemize}

\item $|\PB_4 : \N^{(5)}| = 24= 2^3 \cdot 3$;

\item $|\PB_3 : \N^{(5)}_{\PB_3}| = 864 = 2^5 \cdot 3^3$;

\item $|\F_2 :  \N^{(5)}_{\F_2}| = 288 = 2^5 \cdot 3^2$;

\item $N^{(5)}_{\ord} = 6$;

\item $\N^{(5)}$ is an isolated object of $\NFI_{\PB_4}(\B_4)$ and 
the group $\GT^{\hs}(\N^{(5)})$ is isomorphic to $D_6$ (the dihedral
group of order $12$).  

\end{itemize}

\bigskip

The child's drawing $\cD_{8,0}$ represented by the permutation triple
\begin{equation}
\label{cD-8-0}
\big((1, 2, 3)(4, 5, 6),~ (1, 8, 5)(2, 4, 7), ~(1, 3, 7, 4, 6, 8)(2, 5) \big).
\end{equation}
is subordinate to $\N^{(5)}$. 

$\cD_{8,0}$ has genus $0$ and its passport is
$$
\big( (3, 3, 1, 1),~ (3, 3, 1, 1),~ (6, 2) \big).
$$
The number of child's drawings with this passport is $5$. 

The orbit $\GT^{\hs}(\N^{(5)})(\cD_{8,0})$ is a singleton. Hence the $G_{\bbQ}$-orbit of
$\cD_{8,0}$ is also a singleton. 

Since $\N^{(19)} \le \N^{(5)}$, $\N^{(19)}$ (from the list in \eqref{35-cool-guys}) 
also dominates $\cD_{8,0}$. 

The author could not find $\cD_{8,0}$ in \cite{Belyi_database}. However, $\cD_{8,0}$
can be transformed, by the action of $\B_3$, to the child's drawing represented by the Belyi map
\url{https://beta.lmfdb.org/Belyi/8T12/6.2/3.3.1.1/3.3.1.1/a}. For the action of $\B_3$ on 
child's drawings, we refer the reader to \cite[Construction 1.1.17]{Lando-Z}.

\appendix

\section{Charming $\GT$-shadows satisfy the simplified hexagon relations}
\label{app:simple-hexa}

Let us prove the following auxiliary statement: 
\begin{prop}  
\label{prop:H-I-always}
Let $\N \in \NFI_{\PB_4}(\B_4)$. If a pair $(m,f) \in \bbZ \times \F_2$ 
satisfies the hexagon relations \eqref{hexa1}, \eqref{hexa11} (modulo $\N_{\PB_3}$) then 
\begin{equation}
\label{H-I-here}
f(x,y) f(y,x)  \in \N_{\F_2},
\end{equation}
where $\N_{\F_2} := \N_{\PB_3} \cap \F_2$.
\end{prop}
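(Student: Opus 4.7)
The plan is to derive \eqref{H-I-here} by combining the two hexagon relations and exploiting the inner automorphism of $\B_3$ given by conjugation by the half-twist $\om := \si_1 \si_2 \si_1$. Three elementary facts about $\om$ will be used: the braid relation yields $\om \si_1 \om^{-1} = \si_2$ and $\om \si_2 \om^{-1} = \si_1$, so $\om x \om^{-1} = y$ and $\om y \om^{-1} = x$; consequently $\om\, f(x,y) = f(y,x)\, \om$ for any word $f \in \F_2 = \lan x, y \ran$; and $\om^2 = c$ is central in $\B_3$.

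Write $f$ for $f(x,y)$. First I would multiply hexagon \eqref{hexa1} on the right by $\si_1 x^m$. The left-hand side becomes $\si_1 x^m f^{-1} \si_2 y^m f \si_1 x^m$; the tail $f^{-1} \si_2 y^m f \si_1 x^m$ is precisely the left-hand side of \eqref{hexa11}, so it may be replaced by $\si_2 \si_1 y^{-m} c^m f$ modulo $\N_{\PB_3}$. The right-hand side becomes $f^{-1} \si_1 \si_2\, x^{-m} c^m \si_1 x^m$, which, using $\si_1 x = x \si_1$ (since $x = \si_1^2$) together with the centrality of $c$, collapses to $f^{-1} \om\, c^m$.

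The remaining step is bookkeeping: the modified left-hand side $\si_1 x^m \si_2 \si_1 y^{-m} c^m f$ rewrites as $x^m\, \om\, y^{-m}\, c^m f$, and then $\om y^{-m} = x^{-m} \om$ (a consequence of $\om y \om^{-1} = x$) cancels the initial $x^m$, leaving $\om\, c^m f = c^m\, f(y,x)\, \om$. Equating with the right-hand side $c^m f^{-1} \om$ and cancelling the common left factor $c^m$ and right factor $\om$ in $\B_3/\N_{\PB_3}$ gives $f(y,x) \equiv f(x,y)^{-1} \pmod{\N_{\PB_3}}$, i.e.\ $f(x,y) f(y,x) \in \N_{\PB_3}$. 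Since $f(x,y) f(y,x)$ visibly lies in $\F_2$, it belongs to $\N_{\PB_3} \cap \F_2 = \N_{\F_2}$, which is the desired conclusion. The only obstacle is keeping track of the cancellations carefully; structurally, the identity just records the fact that $f(x,y) f(y,x) = 1$ is a formal consequence of the two hexagons via the $\om$-symmetry of $\B_3$.
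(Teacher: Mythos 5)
Your proof is correct and follows essentially the same route as the paper: both arguments evaluate the element $\si_1 x^m f^{-1}\si_2 y^m f\,\si_1 x^m$ in two ways using the two hexagon relations, arrive at $\om f(x,y)\om^{-1} \equiv f(x,y)^{-1} \pmod{\N_{\PB_3}}$ for $\om=\te=\si_1\si_2\si_1$, and conclude via $\om x\om^{-1}=y$, $\om y\om^{-1}=x$ together with $f(x,y)f(y,x)\in\F_2$. The bookkeeping checks out, so nothing further is needed.
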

\begin{proof}
Let us consider the element 
\begin{equation}
\label{sig-sig-sig}
\si_1^{2m+1} f^{-1} \si_2^{2m+1} f \si_1^{2m+1} \,\N_{\PB_3}   ~\in~ \B_3/\N_{\PB_3}
\end{equation}

On the one hand, \eqref{hexa1} implies that 
$$
(\si_1^{2m+1} f^{-1} \si_2^{2m+1} f) \si_1^{2m+1} \,\N_{\PB_3} ~ = ~
f^{-1} \si_1 \si_2 c^m \si_1^{-2m} \si_1^{2m+1}  \,\N_{\PB_3} ~=~
$$
$$
f^{-1} \si_1 \si_2 c^m \si_1  \,\N_{\PB_3} ~=~ f^{-1} \te c^m  \,\N_{\PB_3}\,,
$$
where $\te: = \si_1 \si_2 \si_1$. 

On the other hand, using \eqref{hexa11} and the relation $\si_1 \te = \te \si_2$ we get
$$
\si_1^{2m+1} (f^{-1} \si_2^{2m+1} f \si_1^{2m+1}) \,\N_{\PB_3} ~ = ~
\si_1^{2m+1}  \si_2 \si_1 c^{m} \si_2^{-2m} f \,\N_{\PB_3} ~=
$$
$$
\si_1^{2m}  \te c^{m} \si_2^{-2m} f \,\N_{\PB_3} ~=~ \te  \si_2^{2m} c^{m} \si_2^{-2m} f \,\N_{\PB_3}
~=~ \te f c^m \,\N_{\PB_3}\,.
$$

Thus $\te f c^m \,\N_{\PB_3} = f^{-1} \te c^m  \,\N_{\PB_3}$ and hence 
\begin{equation}
\label{te-f-te-inv}
\te f \te^{-1} \,\N_{\PB_3} = f^{-1} \,\N_{\PB_3}\,. 
\end{equation}

Since $\te x_{12} \te^{-1} = x_{23}$ and  $\te x_{23} \te^{-1} = x_{12}$ and $f \in \F_2$, 
relation \eqref{te-f-te-inv} implies \eqref{H-I-here}. 
\end{proof}

\bigskip

If we assume that $f \in [\F_2, \F_2]$, then \eqref{H-I-here} implies additional
useful properties:
\begin{prop}  
\label{prop:y-z-z-x}
Let $\N \in \NFI_{\PB_4}(\B_4)$ and $\N_{\F_2} := \N_{\PB_3} \cap \F_2$. 
If $f \in  [\F_2, \F_2]$ satisfies \eqref{H-I-here}, then 
\begin{equation}
\label{f-y-z-z-y-f}
f(y,z) f(z,y) \in \N_{\F_2}\,,
\end{equation}
and 
\begin{equation}
\label{f-z-x-x-z-f}
f(z,x) f(x,z) \in \N_{\F_2}\,,
\end{equation}
where $z:= y^{-1} x^{-1}$. 
\end{prop}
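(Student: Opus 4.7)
The plan is to realize both \eqref{f-y-z-z-y-f} and \eqref{f-z-x-x-z-f} as conjugates of the given relation \eqref{H-I-here} by a braid that implements the cyclic symmetry of the three punctures. Take $\tau := \sigma_1 \sigma_2 \in \B_3$; since $\N_{\PB_3}$ is normal in $\B_3$, conjugation by $\tau^k$ carries $\N_{\PB_3}$ into itself. My claim is that, under the commutator hypothesis on $f$, this conjugation operation sends the expression $f(x,y)\,f(y,x)$ first to $f(y,z)\,f(z,y)$ and then to $f(z,x)\,f(x,z)$, viewed as equalities in $\PB_3$.

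First I will compute the conjugation action of $\tau$ on $x, y \in \F_2 \le \PB_3$. Using the braid relation one obtains $\tau \sigma_1 \tau^{-1} = \sigma_2$, hence $\tau x \tau^{-1} = y$, together with $\sigma_1 \sigma_2 \sigma_1^{-1} = \sigma_2^{-1} \sigma_1 \sigma_2$; squaring the latter yields $\sigma_1 y \sigma_1^{-1} = \sigma_2^{-1} x \sigma_2 = y^{-1} x_{13}\, y$. Rewriting $x_{13} = (yx)^{-1} c = wc$ and using the evident identity $y^{-1} w y = z$ gives the crucial formula $\tau y \tau^{-1} = zc$ in $\PB_3$. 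Iterating, $\tau^2 x \tau^{-2} = zc$ and $\tau^2 y \tau^{-2} = z^{-1} y^{-1} = x$. The second ingredient is a central-absorption lemma: since $f \in [\F_2, \F_2]$ has vanishing exponent sums in both $x$ and $y$, substituting an element of the form $\xi c$ for a variable produces the same element of $\PB_3$ as substituting $\xi$ alone, because the $c$-factors collect at the end with total exponent equal to the relevant (zero) exponent sum. Concretely, $f(y, zc) = f(y, z)$ and $f(zc, y) = f(z, y)$ in $\PB_3$.

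Combining these two computations, I obtain
$$
\tau \cdot f(x,y)\, f(y,x) \cdot \tau^{-1} \;=\; f(y, zc)\, f(zc, y) \;=\; f(y, z)\, f(z, y)
$$
and analogously
$$
\tau^2 \cdot f(x,y)\, f(y,x) \cdot \tau^{-2} \;=\; f(zc, x)\, f(x, zc) \;=\; f(z, x)\, f(x, z).
$$
The left-hand sides lie in $\N_{\PB_3}$ by $\B_3$-normality, while the right-hand sides manifestly lie in $[\F_2, \F_2] \le \F_2$; intersecting with $\F_2$ deposits them in $\N_{\PB_3} \cap \F_2 = \N_{\F_2}$, which is precisely \eqref{f-y-z-z-y-f} and \eqref{f-z-x-x-z-f}. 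The principal obstacle is the braid-group calculation yielding $\tau y \tau^{-1} = zc$ (equivalently, that the outer automorphism of $\F_2 = \PB_3/\langle c \rangle$ cyclically permuting $x, y, z$ is induced by $\tau$); once this is in place, the commutator hypothesis on $f$ disposes of the remaining central factors and the conclusion is immediate.
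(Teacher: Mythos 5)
Your argument is correct and is essentially the paper's own proof: both conjugate the relation $f(x,y)f(y,x)\in\N_{\F_2}$ by $\si_1\si_2$ (once and twice), use normality of $\N_{\PB_3}$ in $\B_3$, and absorb the resulting central factors of $c$ via the vanishing exponent sums of $f\in[\F_2,\F_2]$. The only difference is cosmetic — you derive the conjugation formulas $\tau x\tau^{-1}=y$, $\tau y\tau^{-1}=zc$ from the braid relations explicitly, where the paper just states them.
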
  
\begin{proof} Relation \eqref{H-I-here} obviously implies that 
\begin{equation}
\label{H-I-N-PB-3}
f(x,y) f(y,x)  \in \N_{\PB_3}.
\end{equation}

Since 
$$
\si_1 \si_2 x_{12} (\si_1\si_2)^{-1} = x_{23} 
~~~\txt{and}~~~
\si_1 \si_2 x_{23} (\si_1\si_2)^{-1} = x_{23}^{-1} x_{12}^{-1} c, 
$$
conjugating \eqref{H-I-N-PB-3} by $\si_1 \si_2$ gives us 
\begin{equation}
\label{f-y-z-c}
f(y, zc) f(zc, y)  \in \N_{\PB_3}\,.
\end{equation}

Since $f \in [\F_2, \F_2]$ and $c \in \cZ(\PB_3)$, relation \eqref{f-y-z-c} implies \eqref{f-y-z-z-y-f}. 

Conjugating relation \eqref{f-y-z-c} by $\si_1 \si_2$, we get 
\begin{equation}
\label{f-zc-x-inv}
f(z c,  x ) f(x , z c)  \in \N_{\PB_3}\,.
\end{equation}
 
Since $f(z,x) \in [\F_2, \F_2]$ and $c \in \cZ(\PB_3)$,
relation \eqref{f-zc-x-inv} implies \eqref{f-z-x-x-z-f}. 
\end{proof}

\bigskip

We will now get the versions of relations (4.3) and (4.4) from Section 4 of Drinfeld's foundational 
paper \cite{Drinfeld}:
\begin{prop}  
\label{prop:simple-hexa}
Let $\N \in \NFI_{\PB_4}(\B_4)$ and $(m,f) \in \bbZ \times [\F_2, \F_2]$. The pair 
$(m,f)$ satisfies the hexagon relations \eqref{hexa1}, \eqref{hexa11} (modulo $\N_{\PB_3}$) 
if and only if $(m,f)$ satisfies the relations
\begin{equation}
\label{H-I}
f(x,y) f(y,x)  \in \N_{\F_2},
\end{equation}
\begin{equation}
\label{H-II}
x^m f(z,x)  z^m f(y,z) y^m f(x,y) \in \N_{\F_2},
\end{equation} 
where $\N_{\F_2} := \N_{\PB_3} \cap \F_2$ and $z:= y^{-1} x^{-1}$.
\end{prop}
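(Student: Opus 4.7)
The plan is to establish both implications. For the ``$\Rightarrow$'' direction, relation \eqref{H-I} is exactly the conclusion of Proposition \ref{prop:H-I-always} (which does not invoke $f \in [\F_2, \F_2]$), so the work reduces to deriving \eqref{H-II}.

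To derive \eqref{H-II}, I plan to manipulate \eqref{hexa11} in $\B_3/\N_{\PB_3}$ to extract a purely $\F_2$ identity. The key ingredients are the conjugation identities $\si_1\, y\, \si_1^{-1} = z c$ (already used in the proof of Proposition \ref{prop:y-z-z-x}) and $\si_2^{-1}\, x\, \si_2 = z c$, together with the fact that, for a commutator word $f \in [\F_2, \F_2]$, central factors of $c$ inside arguments of $f$ are invisible, i.e.\ $f(a, c^k b) = f(a, b)$ for every $k \in \bbZ$. Concretely, I will multiply \eqref{hexa11} on the right by $\si_1^{-1}$ and then on the left by $\si_2^{-1}$; on the right-hand side of \eqref{hexa11}, these turn $f$ into $f(x, z)$ and contribute a factor $z^{-m}$ (whose attached $c^{-m}$ cancels the $c^m$ already present), while on the left-hand side they turn $f$ into $f(z, y)$ via the analogous cancellation. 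What remains is
\begin{equation*}
z^m\, f(z, y)^{-1}\, y^m\, f(x, y)\, x^m\, f(x, z)^{-1} \equiv 1 \pmod{\N_{\PB_3}},
\end{equation*}
which, since its left-hand side lies in $\F_2$, descends to an identity modulo $\N_{\F_2} = \N_{\PB_3}\cap\F_2$. Substituting $f(z, y)^{-1} \equiv f(y, z)$ and $f(x, z)^{-1} \equiv f(z, x)$ (these are relations \eqref{f-y-z-z-y-f} and \eqref{f-z-x-x-z-f} from Proposition \ref{prop:y-z-z-x}) and cycling the six-term product produces exactly \eqref{H-II}.

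For the ``$\Leftarrow$'' direction, I plan to reverse the above: given \eqref{H-I} and \eqref{H-II}, invoke Proposition \ref{prop:y-z-z-x} to obtain \eqref{f-y-z-z-y-f} and \eqref{f-z-x-x-z-f}, rearrange to the displayed $\F_2$-identity above (viewed modulo $\N_{\PB_3}$, which is automatic), and reintroduce the $\si_i$'s and central factors of $c$ through the same conjugation identities, run in reverse, to reconstruct \eqref{hexa11}. Then \eqref{hexa1} follows by conjugating \eqref{hexa11} with $\te = \si_1\si_2\si_1$ and using \eqref{H-I} to identify $\te\, f\, \te^{-1} = f(y, x) \equiv f(x, y)^{-1}$ modulo $\N_{\F_2}$.

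I expect the main obstacle to be the careful algebraic bookkeeping: tracking the central $c$-factors that arise when $\si_i^{\pm 1}$ are pushed past pure braid elements and confirming that the commutator hypothesis $f \in [\F_2, \F_2]$ cancels them precisely. A subsidiary source of care is picking the correct ``handedness'' of the conjugations: one must use $\si_1(\cdot)\si_1^{-1}$ and $\si_2^{-1}(\cdot)\si_2$ (both of which send $y$ and $x$ respectively to $zc$), rather than their siblings $\si_1^{-1}(\cdot)\si_1$ and $\si_2(\cdot)\si_2^{-1}$ (which send them to $x_{13} = x^{-1} y^{-1} c$, a different conjugate of $zc$ that would not match the $z$ appearing in \eqref{H-II}).
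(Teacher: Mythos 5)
Your proposal is correct and follows essentially the same route as the paper's proof: both conjugate the hexagon relations by suitable $\si_i$'s to reduce them to identities in $\F_2/\N_{\F_2}$, absorb the central $c$-factors using $f\in[\F_2,\F_2]$, and then invoke Propositions \ref{prop:H-I-always} and \ref{prop:y-z-z-x}. The only (immaterial) organizational difference is that the paper reduces \eqref{hexa1} and \eqref{hexa11} separately to two ``easy'' six-term forms differing in their last factor, whereas you extract \eqref{H-II} from \eqref{hexa11} alone and recover \eqref{hexa1} by $\te$-conjugation together with \eqref{H-I} --- both of which check out.
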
  
\begin{proof}
Recall that
$$
x:= x_{12}, \qquad y:= x_{23}, \qquad z:= y^{-1} x^{-1}, 
\qquad w := x^{-1} y^{-1}.
$$

Using the relations
$$
\si_2^{-1} x_{12} \si_2 = x^{-1}_{23} x^{-1}_{12} c = z c,  \qquad
\si_2^{-1} x_{23} \si_2 = x_{23}
$$
and the properties $c \in \cZ(\PB_3)$, $f^{-1} \in [\F_2, \F_2]$, 
we rewrite the left hand side of \eqref{hexa1} as follows: 
\begin{equation}
\label{hexa1-LHS}
\si_1 x_{12}^m \, f^{-1} \si_2 x_{23}^m f \, \N_{\PB_3} =  
\si_1 \si_2 c^{m}  z^m \, f^{-1}(z,y)  y^m f \, \N_{\PB_3}\,.
\end{equation}

Using the relations
$$
\si_2^{-1} \si_1^{-1} x_{12} \si_1 \si_2 = x^{-1}_{23} x^{-1}_{12} c = z c, \qquad 
\si_2^{-1} \si_1^{-1} x_{23} \si_1 \si_2 = x_{12} = x
$$
and the properties $c \in \cZ(\PB_3)$, $f^{-1} \in [\F_2, \F_2]$, 
we rewrite the right hand side of \eqref{hexa1} as follows: 
\begin{equation}
\label{hexa1-RHS}
f^{-1} \si_1 \si_2 x_{12}^{-m} c^m \, \N_{\PB_3}   = 
\si_1 \si_2 c^{m} f^{-1}(z,x) x^{-m} \, \N_{\PB_3}\,. 
\end{equation}

Combining \eqref{hexa1-LHS} and \eqref{hexa1-RHS}, we see that relation \eqref{hexa1}
is equivalent to 
\begin{equation}
\label{hexa1-easy}
x^m f(z,x) z^m \, f^{-1}(z,y)  y^m f(x,y)  ~\in~  \N_{\F_2}\,.
\end{equation}

Using the relations
$$
\si_2 \si_1 x_{12} \si_1^{-1} \si_2^{-1} = c x_{12}^{-1} x_{23}^{-1} = c w, 
\qquad 
\si_2 \si_1 x_{23} \si_1^{-1} \si_2^{-1} = x_{12} = x
$$
and the properties $c \in \cZ(\PB_3)$, $f \in [\F_2, \F_2]$, 
we rewrite the right hand side of \eqref{hexa11} as follows: 
\begin{equation}
\label{hexa11-RHS}
\si_2 \si_1 x_{23}^{-m} c^m f \, \N_{\PB_3} = 
x^{-m} f(w, x)  \si_2 \si_1 c^m \, \N_{\PB_3}
\end{equation}

Using the relations
$$
\si_2  x_{12} \si_2^{-1} = c w, \qquad 
\si_2  x_{23} \si_2^{-1} = x_{23} = y
$$
and the properties $c \in \cZ(\PB_3)$, $f \in [\F_2, \F_2]$, 
we rewrite the left hand side of \eqref{hexa11} as follows: 
\begin{equation}
\label{hexa11-LHS}
f^{-1} \si_2 x_{23}^m f \si_1 x_{12}^m \,  \N_{\PB_3}  = 
f^{-1} y^m f(w, y) w^m \si_2 \si_1 c^m \,  \N_{\PB_3}\,.
\end{equation}

Combining \eqref{hexa11-LHS} and \eqref{hexa11-RHS}, we see that relation \eqref{hexa11}
is equivalent to 
\begin{equation}
\label{hexa11-well}
y^m f(w,y) w^m f^{-1}(w,x) x^m f^{-1}(x,y) ~\in~  \N_{\PB_3}\,.
\end{equation}

Conjugating \eqref{hexa11-well} by $\te: = \si_1 \si_2 \si_1$, we see that relation \eqref{hexa11}
is equivalent to
\begin{equation}
\label{hexa11-easy}
x^m f(z,x) z^m f^{-1}(z,y) y^m f^{-1}(y,x) ~\in~  \N_{\F_2}\,.
\end{equation}

We can now prove the desired equivalence. Indeed, if a pair $(m,f) \in \bbZ \times [\F_2, \F_2]$ satisfies 
\eqref{hexa1} and \eqref{hexa11} then, due to Proposition \ref{prop:H-I-always}, relation 
\eqref{H-I} holds. 

Moreover, relation \eqref{f-y-z-z-y-f} from 
Proposition \ref{prop:y-z-z-x} implies that
\begin{equation}
\label{f-z-y}
f^{-1}(z,y) \N_{F_2} = f(y,z) \N_{F_2}\,.
\end{equation}
Hence \eqref{hexa1-easy} implies \eqref{H-II}. 

We proved that relations \eqref{hexa1}, \eqref{hexa11} imply relations \eqref{H-I}, \eqref{H-II}. 

Let us now assume that a pair $(m,f) \in \bbZ \times [\F_2, \F_2]$ satisfies relations 
\eqref{H-I} and \eqref{H-II}. Due to Proposition \ref{prop:y-z-z-x}, relation \eqref{f-z-y} holds. 
Using \eqref{H-I},  \eqref{H-II} and \eqref{f-z-y} we deduce relations
\eqref{hexa1-easy} and \eqref{hexa11-easy}. Since we showed above that relation \eqref{hexa1-easy}
(resp. relation \eqref{hexa11-easy}) is equivalent to \eqref{hexa1} (resp. \eqref{hexa11}), we proved
that relations \eqref{H-I}, \eqref{H-II} imply hexagon relations \eqref{hexa1}, \eqref{hexa11}.
\end{proof}

\section{A few words about the package $\GT$}
\label{app:package}
The material presented in Section 5 of this paper depends heavily on the software package 
$\GT$ for working with $\GT$-shadows and their action on child's drawings. 
See \cite{Package-GT} for the detailed documentation of this package. 

The package is written in Python and it uses commands and functions from the library 
SymPy \cite{SymPy}.  Child's drawings are represented by permutation pairs. 
For a finitely generated group $G \cong \F_n/\K$ (e.g. $\B_4$, $\PB_4$, $\B_3$, $\PB_3$, $\F_2$), 
finite index normal subgroups of $G$ are represented by group homomorphisms $\psi$ from 
$\F_n$ to symmetric groups that satisfy the property $\K \subset \ker(\psi)$.  Since  $\K \subset \ker(\psi)$, 
$\ker(\psi)$ corresponds to exactly one finite index normal subgroup of $G \cong \F_n/\K$
via the correspondence theorem. 

Let $G$ be a finitely generated group, $\psi$ be a group homomorphism from $G$ to $S_q$
and $\N := \ker(\psi)$. Then the quotient group $G/\N$ is identified with the permutation 
group $\psi(G) \le S_q$ and permutations in $\psi(G)$ represent elements of $G/\N$. 
 
Two mathematical statements used in the package stand out: 

\begin{itemize}

\item given group homomorphisms $\psi_1 : \F_n \to S_{d_1}$,  $\psi_2 : \F_n \to S_{d_1}$,  we have 
$$
\ker(\psi_1 \times \psi_2) = \ker(\psi_1) \cap \ker(\psi_2);
$$

\item the cyclotomic character $\chi : G_{\bbQ} \to \Zhat^{\times}$ is surjective. 
 
\end{itemize}

The first statement is often used to check whether two finite index normal subgroups 
(of a finitely generated group) coincide or not. The second statement was used for 
testing the package.  

A simple trick (related to managing memory) was used to generate
words (i.e. elements of $\F_2$ or elements of $[\F_2, \F_2]$) that represent distinct 
elements of $\F_2/ \N_{\F_2}$ or distinct elements of the commutator subgroup 
$[\F_2/ \N_{\F_2}, \F_2/ \N_{\F_2}]$: instead of storing actual instances of the class {\it sympy.combinatorics.permutations.Permutation}, 
we stored only their labels (called ``ranks''). 

We should mention that some commands of the package may be 
time consuming and certain commands have limitations due to computer memory.
Typically, there is an option of timing the commands that may be time consuming.

When we run the main file $PaB.py$, a computer creates the list 
$listE$ of compatible equivalence relations on $\PaB^{\le 4}$ corresponding to $35$ distinct elements
\begin{equation}
\label{listN-app}
\N^{(0)},~ \N^{(1)},~ \dots,~ \N^{(33)}, ~\N^{(34)} 
\end{equation}
of the poset $\NFI_{\PB_4}(\B_4)$.  Table 1, on page 11 in \cite{Package-GT}, shows basic information 
about these compatible equivalence relations. For every $0 \le i \le 34$, the quotient group $\F_2/ \N^{(i)}_{\F_2}$ 
is non-Abelian. 

To produce the 35 elements in \eqref{listN-app}, 
we used a generator of all conjugacy classes of 
group homomorphisms $\psi : \B_4 \to S_d$. It is clear that, for every 
homomorphisms $\psi :\B_4 \to S_d$, the subgroup  
$\ker(\psi) \cap \PB_4 \le \PB_4$ belongs to the poset $\NFI_{\PB_4}(\B_4)$. 
For more details, we refer the reader to \cite[Subsection 4.1.1]{Package-GT}. 

Here are selected things one could do with the groupoid 
of $\GT$-shadows using the package:  
\begin{itemize}

\item Given an element $\N \in \NFI_{\PB_4}(\B_4)$, one could generate 
all $\GT$-shadows with the target $\N$ and all charming $\GT$-shadows with the target $\N$. 
For example, all $\GT$-shadows with the target $\N^{(i)}$ are found for every $0 \le i \le 32$ and 
all charming $\GT$-shadows with the target $\N^{(i)}$ are found for every $0 \le i \le 34$. 

\item  One can test whether an element $\N \in \NFI_{\PB_4}(\B_4)$ is an isolated 
object of the groupoid $\GTSh^{\hs}$ (i.e. whether $\GTSh^{\hs}(\N, \N) = \GT^{\hs}(\N)$.)
For example, $27$ of the $35$ elements in \eqref{listN-app} are isolated.  

\item One can compose $\GT$-shadows (if they can be composed) and one can 
compute the inverse\footnote{Note that computing the inverse of a charming $\GT$-shadow 
may be time consuming.} of a charming $\GT$-shadow.

\item Given $\K, \N \in  \NFI_{\PB_4}(\B_4)$ with  
$\K \le \N$ and a $\GT$-shadow $[m, f] \in \GT(\N)$, one can determine 
whether $[m, f]$ survives into $\K$, i.e. whether $[m, f]$ belongs to the image of the natural map 
$\GT(\K) \to \GT(\N)$.  
 
\item Given an isolated object $\N$ of the groupoid $\GTSh^{\hs}$ and the complete 
list of charming $\GT$-shadows with the target $\N$, one can produce a permutation group isomorphic 
to the group $\GT^{\hs}(\N) = \GTSh^{\hs}(\N, \N)$.

\item Given $\K, \N \in  \NFI_{\PB_4}(\B_4)$ with  $\K \le \N$, one 
can look for fake charming $\GT$-shadows with the target $\N$. 
See \cite[Corollary 3.13]{GTshadows}.

\item Given $\N \in  \NFI_{\PB_4}(\B_4)$, one can check whether $\N$ satisfies 
the strong Furusho property or the weak Furusho property. See \cite[Section 5.2]{Package-GT} or 
\cite[Section 4.3]{GTshadows}. 

\end{itemize}

\bigskip

Here are selected things one could do with child's drawings using the package:  

\begin{itemize}

\item Given a child's drawing $\cD$, one can compute its passport, its genus and its monodromy 
group; one can also check whether $\cD$ is Galois or not. 

\item Given child's drawings $\cD$ and $\cD'$, one can test whether $\cD = \cD'$ or not.

\item Given a triple $\tau$ of partitions of $d \in \bbZ_{> 1}$, one can generate 
all child's drawings (if any) whose passport is $\tau$. 

\item Given an integer $d > 1$, one can generate all child's drawings of degree $d$.

\item Given a child's drawing $\cD$ and $\N \in \NFI_{\PB_4}(\B_4)$, one can 
check whether $\cD$ is subordinate to $\N$.

\item Given a child's drawing $\cD$ subordinate to $\N \in \NFI_{\PB_4}(\B_4)$ 
and a $\GT$-shadow $[m, f]$ with the target $\N$, 
one can compute the result $\cD^{[m, f]}$ of the action of $[m, f]$ on $\cD$. 

\item Given a child's drawing $\cD$ subordinate to $\N$ and 
the complete list of $\GT$-shadows (resp. charming $\GT$-shadows) with the target $\N$, 
one can compute the orbit $\GT(\N)(\cD)$ (resp. the orbit $\GT^{\hs}(\N)(\cD)$).

\item Given an element $\N \in \NFI_{\PB_4}(\B_4)$ and a positive integer $d$, 
one can generate all (if any) child's drawings subordinate to $\N$.  

\item Given a child's drawing $\cD$, one can find an element $\N \in \NFI_{\PB_4}(\B_4)$
that dominates $\cD$. 

\end{itemize}


\bigskip
\bigskip

\noindent\textsc{Department of Mathematics,
Temple University, \\
Wachman Hall Rm. 638\\
1805 N. Broad St.,\\
Philadelphia PA, 19122 USA \\
\emph{E-mail address:} {\bf vald@temple.edu}}

\end{document}